\tikzset{strike thru arrow/.style={
    decoration={markings, mark=at position 0.5 with {
        \draw [-] 
            ++ (-1pt,-2pt) 
            -- ( 1pt, 2pt);}
    },
    postaction={decorate},
}}
\newtheorem*{rep@theorem}{\rep@title}
\newcommand{\newreptheorem}[2]{%
\newenvironment{rep#1}[1]{%
 \def\rep@title{#2 \ref{##1}}%
 \begin{rep@theorem}}%
 {\end{rep@theorem}}}
\numberwithin{equation}{section}
\newtheorem{theorem}{Theorem}
\numberwithin{theorem}{section}
\newtheorem{proposition}[theorem]{Proposition}
\newtheorem{lemma}[theorem]{Lemma} 
\newtheorem{conjecture}[theorem]{Conjecture}
\theoremstyle{definition}
\newtheorem{example}[theorem]{Example}
\newtheorem{remark}[theorem]{Remark}
\newtheorem{remarks}[theorem]{Remarks}
\newtheorem{definition}[theorem]{Definition}
\newtheorem{notation}[theorem]{Notation}
\newtheorem*{theorem1i}{\cref{Thm:Main_Thm_Intro} (i)}
\newtheorem*{theorem1iii}{\cref{Thm:Main_Thm_Intro} (iii)}
\newtheorem*{theorem1iv}{\cref{Thm:Main_Thm_Intro} (iv)}
\newcommand\category[1]{\ensuremath{\mathbf{#1}}}
\DeclareMathOperator{\im}{im}
\DeclareMathOperator{\coker}{coker}
\DeclareMathOperator{\id}{Id}
\DeclareMathOperator{\Hom}{Hom}
\DeclareMathOperator{\push}{push}
\DeclareMathOperator{\gr}{gr}
\DeclareMathOperator{\cost}{cost}
\DeclareMathOperator{\labels}{\mathcal L}
\DeclareMathOperator{\Cells}{Cells}
\DeclareMathOperator{\la}{\mathcal L}
\newcommand{\Lan}[0]{E}
\newcommand{\Vect}[0]{\category{Vect}}
\newcommand{\Top}[0]{\category{Top}}
\newcommand{\R}[0]{\mathbb R}
\newcommand{\B}[1]{\mathcal B\ifthenelse{\equal{#1}{}}{}{_{#1}}}
\newcommand{\Bopen}[1]{U\ifthenelse{\equal{#1}{}}{}{(#1)}}
\newcommand{\barc}[1]{\mathcal B\ifthenelse{\equal{#1}{}}{}{(#1)}}
\newcommand{\C}[0]{\mathcal C}
\newcommand{\G}[0]{{\mathcal X}}
\newcommand{\I}[0]{I}
\newcommand{\mcI}[0]{\mathcal I}
\newcommand{\W}[0]{\mathcal W}
\newcommand{\Z}[0]{\mathbb Z}
\newcommand{\pfd}{p.f.d.\@\xspace}
\newcommand{\match}{\mathcal{M}}
\DeclareRobustCommand{\subto}{%
  \mathrel{\mathpalette\short@to\relax}%
}
\newcommand{\short@to}[2]{%
  \mkern2mu
  \clipbox{{.35\width} 0 0 0}{$\m@th#1\vphantom{+}{\rightarrow}$}%
  }
\title[$\ell^p$-Distances on Multiparameter Persistence Modules]{$\ell^p$-Distances on Multiparameter\\ Persistence Modules}
\author{H\aa vard Bakke Bjerkevik}
\address{Institute of Geometry, TU Graz}
\email{bjerkevik@tugraz.at}
\author{Michael Lesnick}
\address{Department of Mathematics and  Statistics, University at Albany, SUNY}
\email{mlesnick@albany.edu}
\subjclass[2020]{55N31 (primary), 68T09 (secondary)}
\date{}
\begin{document}
\begin{abstract}
Motivated both by theoretical and practical considerations in topological data analysis, we generalize the $p$-Wasserstein distance on barcodes to multiparameter persistence modules. For each $p\in [1,\infty]$, we in fact introduce two such generalizations $d_{\mcI}^p$ and $d_\match^p$, such that $d_{\mcI}^\infty$ equals the interleaving distance and $d_{\match}^\infty$ equals the matching distance.  We show that on 1- or 2-parameter persistence modules over prime fields, $d_{\mcI}^p$ is the universal (i.e., largest) metric satisfying a natural stability property; this 
extends a stability theorem of Skraba and Turner for the $p$-Wasserstein distance on barcodes in the 1-parameter case, and is also a close analogue of a universality property for the interleaving distance given by the second author.  
We also show that $d_\match^p\leq d_{\mcI}^p$ for all $p\in [1,\infty]$, extending an observation of Landi in the $p=\infty$ case.  We observe    
 that on 2-parameter persistence modules, $d_\match^p$ can be efficiently approximated.  
In a forthcoming companion paper, we apply some of these results to study the stability of ($2$-parameter) multicover persistent homology.
\end{abstract}
\maketitle
{\small
\tableofcontents
}
\vspace{-3ex}
\section{Introduction}
Topological data analysis (TDA) extracts multiscale information about the geometry of a data set by constructing a diagram of topological spaces from the data.  The standard example is persistent homology \cite{zomorodian2005computing,edelsbrunner2002topological}, which constructs a \emph{filtration}, i.e., a diagram indexed by a totally ordered set $T$,  and then applies homology with field coefficients to obtain descriptors of the data called \emph{barcodes}.  A barcode is simply a collection of intervals in $T$.  In the last two decades, persistent homology has been the subject of intense interest, leading to a rich theory, highly efficient algorithms and software, and hundreds of applications \cite{otter2017roadmap,carlsson2009topology,rabadan2019topological,giunti2021TDA}.

However, in many settings, e.g., in the study of noisy or time-varying data, a single filtration cannot fully capture the structure of interest in the data.  
We are then led to consider \emph{multiparameter persistent homology}, where we instead construct a diagram of spaces indexed by a product of $d\geq 2$ totally ordered sets; the case $d=2$ is of particular interest in applications  \cite{carlsson2009theory}.  Applying homology with field coefficients to this diagram yields a diagram of vector spaces called a \emph{(multiparameter) persistence module}.  Whereas the isomorphism type of a 1-parameter persistence module is completely described by a barcode, this is not the case for 2 or more parameters, and no fully satisfactory generalization of a barcode is available in the multiparameter setting \cite{carlsson2009theory,botnan2021signed}.  This creates substantial challenges for the development of multiparameter persistent homology as a practical data analysis methodology.  A great deal of recent research has been aimed at addressing these challenges, e.g., \cite{botnan2021signed,miller2020homological,lesnick2015theory,blumberg2020stability,scolamiero2017multidimensional,rolle2020stable,kerber2019exact,corbet2019kernel,dey2019generalized} and the progress has been very encouraging.  While practical applications of multiparameter persistence are still in their early stages, a few promising applications have been developed, e.g., in \cite{carriere2020multiparameter,vipond2021multiparameter}, and with the recent advent of efficient algorithms and software for multiparameter persistence \cite{kerber2021fast,lesnick2019computing,kerber2020efficient,lesnick2015interactive} there seems to be great potential for further applications.

Much of both the theory and applications of 1-parameter persistence use barcodes primarily via distances defined on the space of barcodes.  Analogously, to develop theory and applications of multiparameter persistent homology, one needs suitable distances on multiparameter persistence modules.  Distances on multiparameter persistence modules have thus been a major focus of recent work, and many have been proposed, e.g., in \cite{lesnick2015theory,cerri2013betti,scolamiero2017multidimensional,bubenik2018wasserstein,cerri2019geometrical,thomas2019invariants,mccleary2020edit,giunti2021amplitudes}.  Of these, the distances that have received the most attention and (arguably) have the most fully developed theory are \emph{$\ell^\infty$-distances}, i.e., they can be defined in terms of $\ell^\infty$-metrics on Euclidean spaces, and they behave accordingly; we discuss these distances in detail below.  But as we will explain, there is a clear need in both theory and applications for $\ell^p$-distances on multiparameter persistence modules.  In spite of some prior work in this direction (see \cref{Sec:Related_Work}), our understanding of such distances has lagged well behind our understanding of $\ell^\infty$-distances.

In this paper, we develop a theory of $\ell^p$-distances on multiparameter persistence modules which extends and closely parallels the existing theory for $\ell^\infty$-distances, and at the same time extends fundamental ideas about $\ell^p$-distances on barcodes in the 1-parameter setting.  We expect the ideas introduced here to be useful in both theory and applications of multiparameter persistent homology.

\subsection{Generalizations of the Bottleneck Distance}\label{Sec:Generalizations}
In the theory of persistent homology, the most widely used distance on barcodes is the \emph{bottleneck distance} $d_{\W}^\infty$.  It is used to state standard stability results  for persistent homology \cite{cohen2007stability,chazal2009proximity,bauer2015induced,chazal2009gromov,chazal2014persistence}, and it plays an important role in the statistical foundations of the subject \cite{fasy2014confidence}, as well as in the computational theory \cite{sheehy2013linear,cavanna2015geometric}.

In the TDA literature, $d_{\W}^\infty$ is generalized in two key directions, as summarized by the middle row and middle column of \cref{Table:Generalizations_of_Bottleneck}.  
\begin{table}[h]
\captionsetup{width=0.8\textwidth}
\def\arraystretch{1.8}%
  \begin{tabular}{ l | c | c c }
     &  $p=\infty$ &  $p\in [1,\infty]$ &\\
      \cline{1-3}
     $1$-parameter & $d_{\W}^\infty$& $d_{\W}^p$ &  \parbox[t]{5mm}{\multirow{2}{*}{\rotatebox[origin=c]{270}{$\xrightarrow{\mathrm{\large generality}}$}}}    \\
         \cline{1-3}
       $n$-parameter, $n\geq 1$ & $d_\match\leq d_{\mcI}$ & $\textcolor{red}{d^p_\match\leq d^p_\mcI}$ & \\
          \multicolumn{1}{c}{} &\multicolumn{2}{c}{$\xrightarrow{\mathrm{\large generality}}$} &
    \label{Table:Distances}
  \end{tabular}
\caption{The distances on persistence modules considered in this paper. The distances we introduce appear in the bottom right corner, in red.  As one moves rightward or downward through the cells of the table, the distances become more general.}
\label{Table:Generalizations_of_Bottleneck}
\end{table}
\\ 
First, $d_{\W}^\infty$ generalizes to the \emph{$p$-Wasserstein distance} $d^p_\W$ on barcodes, for any $p\in [1,\infty]$  \cite{cohen2010lipschitz,robinson2013hypothesis,carlsson2004persistence}; this amounts to replacing an $\ell^\infty$-norm appearing in the definition of $d_{\W}^\infty$ with an $\ell^p$-norm.  In fact, several variants of the definition of $d^p_\W$ appear in the literature, which differ in a choice of $\ell^q$-norm on $\R^2$.  Following \cite{robinson2013hypothesis,skraba2020wasserstein}, we take $p=q$ in this paper.  See \cref{Sec:Wasserstein_Distance} for the definition of $d^p_\W$ and its variants. 

Many practical applications of persistent homology involve the computation of $p$-Wasserstein distances on barcodes, usually with $p\in\{ 1, 2,\infty\}$.  Efficient algorithms and code are available for this \cite{kerber2017geometry,chen2021approximation}.   
In spite of the importance of $d_{\W}^\infty$ to the theory, applications using a small value of $p$ appear to be more common; papers describing practical applications of $d_{\W}^1$ or $d_{\W}^2$ include \cite{robinson2013hypothesis,gamble2010exploring,dequeant2008comparison,cang2018representability, gidea2017topological,berwald2014critical,belchi2018lung,biwer2017windowed,robinson2018geometry,bramer2020atom,cang2020evolutionary,yalniz2020inferring,Mukherjee2021.04.26.441473,hamilton2021persistent,khalil2021topological}.  Loosely speaking, as $p$ decreases the $p$-Wasserstein distances become relatively less sensitive to outlying intervals in the barcodes, and more sensitive to the number of small intervals, which may explain in part why small choices of $p$ are often preferred.

On the theoretical side, the 1-Wasserstein distance on barcodes is often used to formulate stability results for ``vectorizations'' of barcodes, i.e., for maps from barcode space to a linear space which are used in machine learning applications of TDA; see, e.g., \cite[Section 7]{skraba2020wasserstein} and the references given there.  

As the second key direction of generalization, $d_{\W}^\infty$ extends to a pair of distances on multiparameter persistence modules, the \emph{interleaving distance} $d_{\mcI}$ \cite{chazal2009proximity,chazal2012structure} and the \emph{matching distance} $d_\match$ \cite{cerri2013betti}.   The interleaving distance is the most prominent distance in the literature on multiparameter persistence modules.  It has proven to be a useful theoretical tool in TDA, e.g., for formulating stability and consistency results \cite{lesnick2012multidimensional,blumberg2020stability,rolle2020stable}.  The use of $d_{\mcI}$ in the TDA theory is justified in part by a universal property which says that on multiparameter persistence modules over prime fields, $d_{\mcI}$ is the largest distance satisfying a natural stability property \cite{lesnick2015theory}; see \cref{Thm:Stability_and_Universality_of_The_Interleaving_Distance} below.
 
The result that $d_{\mcI}=d_{\W}^\infty$ on 1-parameter persistence modules is known as the \emph{isometry theorem} \cite{bauer2015induced,bjerkevik2016stability,chazal2012structure,lesnick2015theory,bubenik2014categorification}; see \cref{Thm:Isometry} for a formal statement.  The inequality $d_{\mcI}\geq d_{\W}^\infty$, originally due to Chazal et al. \cite{chazal2009proximity}, is called the \emph{algebraic stability theorem}.  It plays a particularly important role in persistence theory.  
 
It has been shown that computing $d_{\mcI}$ on $n$-parameter persistence modules is NP-hard for $n\geq 2$ \cite{bjerkevik2019computing}.  This motivates the search for a computable surrogate for $d_{\mcI}$, and the matching distance has emerged as a natural choice.  It has been shown by Landi \cite{landi2018rank} (and is implicit in earlier work by Cerri et al. \cite{cerri2013betti}) that $d_\match\leq d_{\mcI}$. 
On bipersistence modules, (i.e., 2-parameter persistence modules,) $d_\match$ is computable in polynomial time \cite{kerber2019exact}.  It can also be efficiently approximated \cite{biasotti2011new,kerber2020efficient}, and a fast implementation of the approximation algorithm of \cite{kerber2020efficient} was recently made available as part of the Hera code \cite{kerber2017geometry}.  It is expected that these results about computing $d_\match$ on bipersistence modules extend to higher-parameter persistence modules, though this has not been proven. 
While there have been only a few applications of the multidimensional matching distance to date \cite{keller2018persistent,biasotti2011new}, it seems likely that recent advances in 2-parameter persistence computation and software \cite{lesnick2019computing,kerber2020efficient,kerber2021fast} will lead to further applications.

\subsection{Stability of the Interleaving and Wasserstein Distances}\label{Sec:Stability_Intro}
The $p$-Wasserstein distances and interleaving distance satisfy similar stability properties, which will be important points of reference for our work.  
To state these properties, we will need a few definitions.  We consider $\R^n$ as a poset with the usual product partial order, i.e., $(a_1,\ldots,a_n)\leq (b_1,\ldots, b_n)$ if and only if $a_i \leq b_i$ for each $i$.   Given a CW-complex $X$, let $\Cells(X)$ denote the set of cells of $X$.  Following \cite{skraba2020wasserstein}, we say a function $f\colon\Cells(X)\to \R^n$ is \emph{monotone} if $f(\sigma)\leq f(\tau)$ whenever the attaching map of $\tau$ has non-trivial intersection with $\sigma$.  

\begin{definition}[Sublevel filtrations]\mbox{}
\begin{itemize}
\item[(i)] For any topological space $T$ and function $f\colon T\to \R^n$, the sublevel filtration of $f$, denoted $\mathcal S(f)$, is the functor $\R^n\to \Top$ given by 
\[\mathcal S(f)_a=\{x\in T\mid f(x)\leq a\}\]
for each $a\in \R^n$, with the internal maps of $\mathcal S(f)$ taken to be inclusions.  
\item[(ii)] Similarly, given a CW-complex $X$ and monotone $f\colon\Cells(X)\to \R^n$, the sublevel filtration of $f$, also denoted $\mathcal S(f)$, is the functor $\R^n\to \Top$ given by taking $\mathcal{S}(f)_a$ to be the following subcomplex of $X$ 
\[\mathcal{S}(f)_a=\{\sigma\in \Cells(X)\mid f(\sigma)\leq a\},\]
with the internal maps again taken to be inclusions.  
\end{itemize}
\end{definition}
For $p\in [1,\infty]$ and $v\in \R^n$, let $\|v\|_p$ denote the $p$-norm of $v$. 

\begin{notation}\label{Not:function_p_norm}
Given a set $S$ and function $f\colon S\to \R^n$, let 
\[\|f\|_\infty = \sup_{x\in S}\| f(x)\|_\infty,\]
and if $f$ has finite support, then for all $p\in [1,\infty)$ let
\[\|f\|_p= \left(\sum_{x\in S}\|f(x)\|_p^p\right)^{\frac{1}{p}}.\]
\end{notation}
Let $H_i$ denote the (singular or cellular) homology functor with coefficients in some fixed field $k$.  
In what follows (and throughout this paper), by a \emph{distance} we mean an \emph{extended psuedometric}; see \cref{Def: Extended_(Pseudo)Metric}.  
  
\begin{definition}[Stability Properties]
For $n\geq 1$ and $p\in [1,\infty]$, we say a distance $d$ on $n$-parameter persistence modules is 
\begin{itemize}
\item[(i)]\emph{stable} if for all topological spaces $T$, functions $f,g\colon T\to \R^n$, and $i\geq 0$, we have
\[d(H_i\mathcal{S}(f), H_i\mathcal{S}(g))\leq \|f-g\|_{\infty},\]
\item[(ii)] \emph{$p$-stable} if for any finite CW-complex $X$, monotone $f,g\colon \Cells(X)\to \R^n$, and $i\geq 0$, we have
\[d(H_i\mathcal{S}(f),H_i\mathcal{S}(g))\leq \|f-g\|_p,\]
\item[(iii)] \emph{$p$-stable across degrees with constant $c$} if for all $X$, $f$, and $g$ as in (ii), we have 
\[\|d(H_{*}\mathcal{S}(f),H_{*}\mathcal{S}(g))\|_p \leq c\|f-g\|_p,\]
where $d(H_*\mathcal{S}(f),H_*\mathcal{S}(g))$ denotes the sequence of non-negative real numbers
\[d(H_0\mathcal{S}(f),H_0\mathcal{S}(g)),\ d(H_1\mathcal{S}(f),H_1\mathcal{S}(g)),\ d(H_2\mathcal{S}(f),H_2\mathcal{S}(g)),\ \ldots\]
\end{itemize}
\end{definition}

\begin{theorem}[Stability and Universality of the Interleaving Distance \cite{lesnick2015theory}]\label{Thm:Stability_and_Universality_of_The_Interleaving_Distance}
For any $n\geq 1$,
\begin{itemize}
\item[(i)] $d_{\mcI}$ is stable on $n$-parameter persistence modules,
\item[(ii)] if the field of coefficients $k$ is prime and $d$ is any stable distance on $n$-parameter persistence modules, then $d\leq d_{\mcI}$.
\end{itemize}
\end{theorem}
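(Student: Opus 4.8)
The plan is to prove the two parts separately; part (i) is short, and part (ii) reduces to a realization construction that is the real content.

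For part (i), let $T$, $f$, $g$, and $i\geq 0$ be as in the definition of stability, and set $\epsilon=\|f-g\|_\infty$, which we may assume finite. Writing $\vec\epsilon=(\epsilon,\dots,\epsilon)\in\R^n$, we have $f(x)\leq g(x)+\vec\epsilon$ and $g(x)\leq f(x)+\vec\epsilon$ for every $x\in T$, since $|f(x)_j-g(x)_j|\leq\epsilon$ in each coordinate $j$. Hence $\mathcal S(f)_a\subseteq\mathcal S(g)_{a+\vec\epsilon}$ and $\mathcal S(g)_a\subseteq\mathcal S(f)_{a+\vec\epsilon}$ for all $a\in\R^n$; as every map here is an inclusion of subspaces, these families are natural in $a$ and their composites recover the internal maps of $\mathcal S(f)$ and $\mathcal S(g)$, so they constitute a (strict) $\epsilon$-interleaving of the diagrams $\mathcal S(f),\mathcal S(g)\colon\R^n\to\Top$. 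Applying $H_i$ and invoking functoriality yields an $\epsilon$-interleaving of $H_i\mathcal S(f)$ and $H_i\mathcal S(g)$, whence $d_{\mcI}(H_i\mathcal S(f),H_i\mathcal S(g))\leq\epsilon=\|f-g\|_\infty$.

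For part (ii), let $d$ be any stable distance. The inequality is vacuous when $d_{\mcI}(M,N)=\infty$, so fix modules $M,N$ with $d_{\mcI}(M,N)<\infty$; since $\{\epsilon\geq 0: M\text{ and }N\text{ are }\epsilon\text{-interleaved}\}$ is upward closed, it suffices to show $d(M,N)\leq\epsilon$ for every $\epsilon>d_{\mcI}(M,N)$ and then take the infimum over such $\epsilon$. Fix such an $\epsilon$ together with an $\epsilon$-interleaving between $M$ and $N$. Everything then rests on the following \emph{realization} claim: there exist a topological space $T$, functions $f,g\colon T\to\R^n$ with $\|f-g\|_\infty\leq\epsilon$, and an $i$ with $H_i\mathcal S(f)\cong M$ and $H_i\mathcal S(g)\cong N$. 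Granting it, stability of $d$ and the fact that distances on persistence modules are invariant under isomorphism give $d(M,N)=d(H_i\mathcal S(f),H_i\mathcal S(g))\leq\|f-g\|_\infty\leq\epsilon$.

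To prove the realization claim I would first encode the $\epsilon$-interleaving as a single functor $H\colon Q_\epsilon\to\Vect$, where $Q_\epsilon$ is the poset obtained from two copies of $\R^n$ by declaring $a$ in the first copy to be $\leq b$ in the second exactly when $b\geq a+\vec\epsilon$, and symmetrically; functors $Q_\epsilon\to\Vect$ correspond precisely to pairs of $\R^n$-modules together with an $\epsilon$-interleaving between them. Next, realize $H$ as the degree-$2$ homology of a $Q_\epsilon$-filtered space: choose a free presentation $F_1\xrightarrow{A}F_0\to H\to 0$ by $Q_\epsilon$-modules, where ``free'' means a direct sum of representables $P_q$ and the presentation is allowed infinitely many summands; form a wedge of $2$-spheres (rationalized when $k=\mathbb Q$) indexed by the generators of $F_0$, and attach a $3$-cell for each generator of $F_1$, with attaching map chosen to induce the corresponding column of $A$ on second homology. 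This last step is possible precisely because $k$ is prime — for $k=\mathbb F_p$ one realizes integral matrices reduced mod $p$, and for $k=\mathbb Q$ one uses the homotopy of rationalized spheres to realize every rational matrix — and it has no analogue for non-prime $k$. Let each cell enter the filtration at the $Q_\epsilon$-index of the generator it comes from; the support condition on $A$ (namely, $A_{kj}\neq 0$ forces $q_k\leq q_j$) makes this filtration well defined, and a homology computation identifies the degree-$2$ homology of this filtered space with $H$ as a $Q_\epsilon$-module. Finally, restricting along the two embeddings $\R^n\hookrightarrow Q_\epsilon$ gives two exhaustive filtrations of a common underlying space $T$, each of sublevel type, hence of the form $\mathcal S(f)$ and $\mathcal S(g)$; then $H_2\mathcal S(f)\cong M$ and $H_2\mathcal S(g)\cong N$, and since a cell entering the first filtration at index $a$ enters the second at $a+\vec\epsilon$, and conversely, we get $\|f-g\|_\infty\leq\epsilon$.

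The main obstacle is this realization claim, and within it the cell-attaching step: showing that a $Q_\epsilon$-free presentation exists in the needed generality (with infinitely many cells when $M$ and $N$ are not finitely presented) and that attaching maps inducing prescribed columns of $A$ exist is exactly where the prime-field hypothesis is indispensable, and the only point at which the argument could fail for a general field. The remaining bookkeeping — chiefly arranging that each restricted filtration is literally the sublevel filtration of a function rather than an arbitrary exhaustive filtration — is routine, as is part (i).
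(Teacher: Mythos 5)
Your approach to both parts is correct in outline and is essentially the strategy of the proof in \cite{lesnick2015theory}, which is the source the paper cites for this theorem; the paper itself does not reprove \cref{Thm:Stability_and_Universality_of_The_Interleaving_Distance}. The paper \emph{does} carry out an analogous realization argument in its own proof of \cref{Thm:Universality}\,(ii), via \cref{Lem:Universality_Lifting}, and it is worth noting two cosmetic differences: \cref{Lem:Universality_Lifting} realizes the cokernel as $H_1$ of a wedge of circles with attached $2$-cells rather than as $H_2$ of a wedge of $2$-spheres with $3$-cells, and it handles $k=\mathbb{Q}$ by clearing denominators (scaling each column of the attaching matrix by an integer, which leaves the cokernel unchanged) rather than by rationalizing spheres. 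Either version of the cell-attaching step is fine, and you correctly identify the prime-field hypothesis as the reason it works.

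There is, however, a genuine gap in the step you wave off as routine. Your filtered space is a wedge --- a single basepoint $v$ shared by every $2$-sphere --- and you assign filtration grades only to cells ``coming from a generator,'' which $v$ is not. For the $Q_\epsilon$-filtration (and hence both $\R^n$-restrictions) actually to be the sublevel filtration of a function, $v$ must receive a grade in $\R^n$ lying below the grade of every $2$- and $3$-cell. When $M$ and $N$ are finitely presented this is trivial: take the coordinatewise minimum of the finitely many labels, which is exactly what \cref{Lem:Universality_Lifting} does. But \cref{Thm:Stability_and_Universality_of_The_Interleaving_Distance} carries no finiteness hypothesis --- the remark following \cref{Thm:Universality} flags this distinction explicitly --- and in general the grades of generators in a free presentation of the $Q_\epsilon$-module $H$ have no lower bound in $\R^n$ (already for $M=N$ the constant module $k$, whose free cover needs generators at cofinally low grades). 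In that case there is no admissible grade for $v$, the restricted filtrations fail to be ``of sublevel type'' for any choice, and your homology computation identifying $H_2$ of the sublevel set at $a$ with $H_a$ breaks for those $a$ not above whatever grade you assign to $v$. Repairing this requires a real modification of the construction, for instance replacing the single basepoint by a contractible subspace on which $f$ and $g$ agree (say an order-embedded copy of $\R^n$, attaching each sphere at a point below its own grade and routing the $3$-cell attaching maps accordingly), or some comparable device; this bookkeeping, not the cell-attaching step, is the part of the argument in \cite{lesnick2015theory} that genuinely takes care.
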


In view of the isometry theorem, \cref{Thm:Stability_and_Universality_of_The_Interleaving_Distance}\,(i) generalizes the well-known stability theorem for sublevel persistent homology of $\R$-valued functions given in \cite{d2003optimal,cohen2007stability}.  In fact, \cref{Thm:Stability_and_Universality_of_The_Interleaving_Distance}\,(i) turns out to be trivial, but the proof of \cref{Thm:Stability_and_Universality_of_The_Interleaving_Distance}\,(ii) requires some work.

Recently, Skraba and Turner have established the following fundamental $\ell^p$-stability result for the 1-parameter persistent homology of filtered CW-complexes.

\begin{theorem}[$\ell^p$-Stability of 1-Parameter Persistent Homology \cite{skraba2020wasserstein}]\label{Thm:Skraba_Turner_Stability}
For all $p=[1,\infty]$, $d_{\W}^p$ is $p$-stable across degrees with constant $1$, and hence also $p$-stable.  
\end{theorem}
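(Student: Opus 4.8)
The plan is to reduce the $n=1$ case to a purely combinatorial statement about how the barcode of $H_*\mathcal S(f)$ changes as we move along the straight-line homotopy from $f$ to $g$ in the space of monotone functions $\Cells(X)\to\R$. First I would set $h_t = (1-t)f + tg$ for $t\in[0,1]$; since the set of monotone functions $\Cells(X)\to\R$ is convex, each $h_t$ is monotone, so $H_i\mathcal S(h_t)$ is a one-parameter persistence module with a well-defined barcode $\barcs_i(t)$. The key observation is that as $t$ varies, the critical values $h_t(\sigma) = (1-t)f(\sigma) + tg(\sigma)$ move linearly, and the combinatorial type of the filtered complex (i.e., the ordering of the cells by filtration value) changes only at finitely many values of $t$, namely where two of these linear functions cross. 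So I would partition $[0,1]$ at these crossing points $0 = t_0 < t_1 < \dots < t_N = 1$ and argue degree-by-degree on each subinterval.

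The heart of the matter is to understand what happens at a single crossing value $t_j$, and more importantly to control the \emph{total} movement of the barcode endpoints summed over all crossings. On the open subinterval $(t_{j-1}, t_j)$ the ordering of the cells is fixed, so by the standard theory of persistence of filtered complexes (the pairing is determined by the combinatorial order, via, say, the matrix reduction / pairing lemma), the endpoints of the bars in each $\barcs_i(t)$ are given by evaluating $h_t$ at a fixed cell, i.e., they trace out affine functions of $t$; at the crossing $t_j$ the pairing can jump, but a bar can only be born or die, or swap its partner, and the resulting discontinuity in barcode space is controlled. The clean way to package this is: for $t, t'$ in the closure of one subinterval, there is a bijection between the multisets of (birth, death) pairs of $H_*\mathcal S(h_t)$ and $H_*\mathcal S(h_{t'})$ under which each endpoint moves by exactly $|h_t(\sigma) - h_{t'}(\sigma)|$ for the corresponding cell $\sigma$, and hence the transport cost (in the $p$-Wasserstein-across-degrees sense) between consecutive points is at most $\|h_t - h_{t'}\|_p$. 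Summing over the partition and using that $h_t$ depends affinely on $t$ — so that $\sum_j \|h_{t_j} - h_{t_{j-1}}\|_p = \|f-g\|_p$ exactly — together with the triangle inequality for $d_\W^p$ across degrees, gives $\|d_\W^p(H_*\mathcal S(f), H_*\mathcal S(g))\|_p \le \|f-g\|_p$, which is $p$-stability across degrees with constant $1$. The final clause ("and hence also $p$-stable") is immediate, since $d_\W^p$ on a single homology degree is a coordinate of the left-hand sequence and any coordinate of a vector is at most its $p$-norm.

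The main obstacle, and the place where real work is needed, is the crossing analysis: when two cells exchange their order in the filtration at $t = t_j$, the persistence pairing can change in a genuinely discontinuous way (a bar that was paired with one negative cell may suddenly pair with another, and short bars can be created or destroyed), so it is not obvious that the barcode moves continuously in $t$ at all — a priori an endpoint could jump by a macroscopic amount. The resolution is that although individual pairings jump, one can always find a matching between the "before" and "after" barcodes that is cheap: this is essentially a local algebraic stability statement at a single transposition of two adjacent simplices, which one proves by a direct analysis of how the reduced boundary matrix changes under swapping two consecutive columns/rows (only a bounded-size submatrix is affected). One must also handle two subtleties carefully: (a) there may be several pairs of cells crossing at the same $t_j$, which one handles by ordering the crossings and treating them one transposition at a time, or by a direct argument that the simultaneous case is no worse; and (b) the bookkeeping must be done for the full graded barcode $H_*$ simultaneously, with the $\ell^p$-norm taken over degrees as well as over bars, so the matchings at each step must be chosen degree-compatibly and the cost estimate must be the $p$-norm of the vector $(\text{cost in degree } 0, \text{cost in degree } 1, \dots)$ — but since the movement of each endpoint is bounded by $|h_t(\sigma) - h_{t'}(\sigma)|$ for a cell $\sigma$ that appears in only one degree's worth of pairings per step, these costs assemble correctly into $\|h_t - h_{t'}\|_p$ without any loss.
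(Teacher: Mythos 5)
The paper does not reprove this theorem; it is cited from Skraba and Turner. However, the paper's proof of the closely related result \cref{Thm:Universality}\,(i) in the $n=1$ case (\cref{Sec:Proof_of_Stability}), and also the proof that $d_{\W}^p \leq d_{\mcI}^p$ in \cref{Sec:dI=dP}, both use the same linear-interpolation strategy you describe, and the paper explicitly notes that its proof of \cref{Thm:Main_Thm_Intro}\,(iv) is ``similar to the proof of \cref{Thm:Skraba_Turner_Stability} given in \cite{skraba2020wasserstein}.'' Your proposal is essentially that argument. In particular, the observation driving the constant $1$ --- that each $j$-cell $\sigma$ is responsible for exactly one endpoint of exactly one bar in exactly one degree (the birth of a $j$-bar if $\sigma$ is positive, the death of a $(j-1)$-bar if negative) --- is precisely what the paper exploits when it observes that a change to a $j$-simplex's value perturbs ``either a row of $P_j$ or a column of $P_{j-1}$ but not both simultaneously.''

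The one place where you have talked yourself into unnecessary trouble is the paragraph beginning ``The main obstacle.'' Your ``clean way to package this'' paragraph is already a complete argument and needs no transposition lemma: you should match $\barcs_*(t_{j-1})$ directly to $\barcs_*(t_j)$ via the pairing that is valid on the \emph{open} interval $(t_{j-1},t_j)$. This pairing also computes the correct barcode at both closed endpoints, because the barcode of a one-parameter filtered complex depends only on the filtration values and not on the tie-breaking order, so any compatible total order --- including the one inherited from the interior of the subinterval --- gives the right answer at $t_{j-1}$ and $t_j$; a pair whose two values coincide at an endpoint simply yields a zero-length bar, which you send to the diagonal at no extra cost. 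You never need to compare the barcode immediately to the left of a crossing with the barcode immediately to the right, so the transposition analysis you flag as ``where real work is needed'' does not arise. This is exactly what the paper's ``$\labels(P_s)$-compatible'' device (\cref{Lem:Refinements_And_Admissibility}, and the Kan-extension formulation in \cref{Sec:Proof_of_Stability}) accomplishes: a single normal-form reduction, performed at an interior parameter $s\in(t_{j-1},t_j)$, is simultaneously admissible for both endpoints and hence produces one matching. Worth noting also: if you \emph{did} try to chase individual transpositions the way your last paragraph suggests, summing per-transposition costs would not obviously telescope to $\|f-g\|_p$, because a single cell can participate in arbitrarily many transpositions; the subinterval matching avoids this pitfall by charging each cell exactly once per subinterval, after which affineness of $h_t$ gives the exact telescoping $\sum_j \|h_{t_j}-h_{t_{j-1}}\|_p = \|f-g\|_p$. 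So keep your second paragraph, drop the worry in your third, and the argument is sound and essentially the one in the paper.
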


Note that in the special case of $\R$-valued monotone functions on finite CW-complexes, \cref{Thm:Stability_and_Universality_of_The_Interleaving_Distance}\,(i) coincides with the $p=\infty$ case of \cref{Thm:Skraba_Turner_Stability}.  In \cite{skraba2020wasserstein}, \cref{Thm:Skraba_Turner_Stability} is applied to obtain new stability results for sublevel filtrations of greyscale images, Vietoris-Rips complexes, and the persistent homology transform.

\begin{remark}
A 2010 paper of Cohen-Steiner et al. gives a different stability result for sublevel persistent homology using a Wasserstein distance on barcodes \cite{cohen2010lipschitz}.  This result concerns Lipschitz functions on triangulable, compact metric spaces, and uses the variant of $p$-Wasserstein distance on barcodes defined using the $\ell^\infty$-norm on $\R^2$, rather than the $\ell^p$-norm; see \cref{Rem:Waserstein_History_Variants}.
\end{remark}

\subsection{The $p$-Presentation and $p$-Matching Distances}
For $p\in [1,\infty]$, we introduce two generalizations of the $p$-Wasserstein distance $d_{\W}^p$ to finitely presented multiparameter persistence modules.  We call these the \emph{$p$-presentation distance} (or simply \emph{presentation distance}) and the \emph{$p$-matching distance}, and denote them by $d_{\mcI}^p$ and $d_\match^p$, respectively.  In the case $p=\infty$, they are respectively equal to the interleaving distance and matching distance. We show that several fundamental properties of the Wasserstein, interleaving, and matching distances extend to our new distances.  

A number of other $\ell^p$-type distances on multiparameter persistence modules have recently been proposed; we discuss these below, in \cref{Sec:Related_Work}.  However, our work is the first to consider a common generalization of the Wasserstein and interleaving distances, and also the first to consider a common generalization of the Wasserstein and matching distances.

We next give a brief description of $p$-presentation and $p$-matching distances; details can be found in later sections of the paper.  A \emph{presentation} of an $n$-parameter persistence module $M$ is a morphism of free modules $\gamma\colon F\to G$ whose cokernel is isomorphic to $M$.  If $M$ is finitely presented, then by choosing ordered bases of $F$ and $G$, we can represent $\gamma$ as a matrix with an $\R^n$-valued label attached to each row and each column; we call this a \emph{presentation matrix}.  
Given presentation matrices $P_M$ and $P_N$ for persistence modules $M$ and $N$ with the same underlying matrix, let $d^p(P_M,P_N)$ denote the $\ell^p$-distance between the sets of labels of $P_M$ and $P_N$.  Define $\hat d_{\mcI}^p(M,N)$ to be the infimum of $d^p(P_M,P_N)$ over all such choices of $P_M$ and $P_N$.  It turns out that $\hat d_{\mcI}^p$ does not satisfy the triangle inequality when $n\geq 2$ and $p\in [1,\infty)$.  We define $d_{\mcI}^p$ to be the largest distance that is bounded above by $\hat d_{\mcI}^p$ and satisfies the triangle inequality.
 
The matching distance $d_{\match}(M,N)$ is defined in terms of the bottleneck distance between 1-parameter affine restrictions of $M$ and $N$.  The definition of $d_\match^p$ amounts simply to replacing the bottleneck distance with the $p$-Wasserstein distance in the definition of $d_{\match}$.  

\subsection{Main Results}
The following theorem describes the relationship of our distances to each other, and to the other distances on persistence modules mentioned above.  
\begin{theorem}\label{Thm:Main_Thm_Intro}\mbox{}
On finitely presented multiparameter persistence modules,
\begin{itemize}
\item[(i)] $d_{\mcI}^\infty=d_{\mcI}$,
\item[(ii)] $d_{\match}^\infty=d_\match$,
\item[(iii)] $d_\match^p\leq d_{\mcI}^p$ for all $p\in [1,\infty]$.
\end{itemize}
On finitely presented 1-parameter persistence modules, 
\begin{itemize}
\item[(iv)] $d_{\mcI}^p=d_{\W}^p=d_{\match}^p$ for all $p\in [1,\infty]$.
\end{itemize}
\end{theorem}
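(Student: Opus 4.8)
The plan is to prove (iv) from three ingredients: two essentially definitional identifications, and one substantive inequality carrying all the weight. Throughout, $M$ and $N$ are finitely presented $1$-parameter modules with (finite) barcodes, and I write $d_\W^p(M,N)$ for the $p$-Wasserstein distance between these barcodes. First I would check that $d_\match^p(M,N)=d_\W^p(M,N)$; this should be immediate from the definition of $d_\match^p$, since in one parameter there is, up to affine reparametrization, a unique line through $\R$, the restriction of $M$ to it is $M$ itself, and the slope-dependent weight in the matching distance is exactly the factor cancelling the rescaling of the $p$-Wasserstein cost under reparametrization — so the supremum defining $d_\match^p$ is attained at the tautological line and equals $d_\W^p(M,N)$. (For $p=\infty$ this is the classical identification of the matching distance with the bottleneck distance, and also follows from part (ii).)

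Next I would establish the upper bound $\hat d_\mcI^p(M,N)\le d_\W^p(M,N)$, hence $d_\mcI^p(M,N)\le d_\W^p(M,N)$. Fix an optimal matching between the barcodes for $d_\W^p$, pairing some finite intervals with each other, the rest with the diagonal, and infinite intervals only with infinite intervals (if the numbers of infinite intervals disagree, both distances are $\infty$ and there is nothing to prove). From this matching I would build a single block-diagonal underlying matrix $A$: a $1\times1$ block $[1]$ for each matched pair of finite intervals, a $1\times1$ block $[1]$ for each interval matched to the diagonal, and a $1\times0$ block for each matched pair of infinite intervals. Label $P_M$ using the endpoints of the relevant interval of $M$ and $P_N$ using those of $N$, except that for an interval $[b,d)$ of $M$ matched to the diagonal I put both labels of the corresponding block of $P_N$ at the value $t$ minimizing $\lVert(b,d)-(t,t)\rVert_p$ (so that block presents the zero module), and symmetrically for $N$-intervals. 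Then $P_M$ presents $M$, $P_N$ presents $N$, they share the underlying matrix, and — using that $n=1$, so that aggregating the two labels of a block in $\ell^p$ is the $\ell^p$-norm on $\R^2$ — $d^p(P_M,P_N)$ equals exactly the $p$-Wasserstein cost of the chosen matching. Taking the infimum proves the bound, and since $d_\W^p$ is an honest distance, the definition of $d_\mcI^p$ as the largest distance dominated by $\hat d_\mcI^p$ gives $d_\mcI^p\le\hat d_\mcI^p\le d_\W^p$.

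The crux is the reverse bound $d_\W^p(M,N)\le\hat d_\mcI^p(M,N)$, i.e.\ $d_\W^p(M,N)\le d^p(P_M,P_N)$ for \emph{every} pair of presentations of $M$ and $N$ sharing an underlying matrix $A$. The plan is to realize such a pair as the degree-$1$ persistent homology of sublevel filtrations of monotone functions on one finite CW-complex, and then invoke \cref{Thm:Skraba_Turner_Stability}. Concretely, build $X$ with a single vertex $v$, one $1$-cell $e_i$ (a loop at $v$) for each generator, and one $2$-cell $c_j$ for each relation, attached along a word in the $e_i$ whose cellular boundary $\partial_2$ is $A$ (after lifting $A$ to an integer matrix with the same zero-pattern); set $f(v)=g(v)$ below all labels, let $f$ carry the $P_M$-labels on the $e_i$ and $c_j$, and $g$ the $P_N$-labels. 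Monotonicity of $f$ and $g$ is exactly the requirement that a row label not exceed a column label at each nonzero entry of $A$; one then checks $H_1\Sb(f)\cong M$ and $H_1\Sb(g)\cong N$ (at level $a$, every relation present involves only generators present), and $\lVert f-g\rVert_p=d^p(P_M,P_N)$ because $f$ and $g$ agree on $v$. Applying \cref{Thm:Skraba_Turner_Stability} in degree $1$ then yields $d_\W^p(M,N)=d_\W^p(H_1\Sb(f),H_1\Sb(g))\le\lVert f-g\rVert_p=d^p(P_M,P_N)$.

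Putting the pieces together: the second and third steps give $\hat d_\mcI^p(M,N)=d_\W^p(M,N)$, which is thus a distance, so $d_\mcI^p=\hat d_\mcI^p=d_\W^p$; combined with the first step, $d_\mcI^p=d_\W^p=d_\match^p$ on finitely presented $1$-parameter modules. I expect the third step to be the real obstacle: the other two are bookkeeping with the definitions, whereas $d_\W^p\le d_\mcI^p$ is an algebraic-stability statement (for $p=\infty$ it is precisely the algebraic stability theorem) that cannot be extracted formally from the rest — this is exactly where the full strength of \cref{Thm:Skraba_Turner_Stability} is used. A subsidiary technical point within that step is the CW realization: one must check that an arbitrary presentation matrix over the coefficient field can be realized, up to grading-preserving isomorphism, by the cellular chain complex of such a $2$-complex, which is transparent over prime fields and over $\mathbb Q$ and requires slightly more care in general.
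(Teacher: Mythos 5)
Your proposal addresses only part (iv); parts (i) and (iii) — $d_\mcI^\infty=d_\mcI$ and $d_\match^p\le d_\mcI^p$ — require separate arguments (via an interleaving–presentation equivalence and a push-map stability lemma, respectively) that you do not touch. Within (iv), your first two ingredients match the paper's approach: the identification $d_\match^p=d_\W^p$ in one parameter is definitional, and the construction of a pair of presentations from an optimal Wasserstein matching (diagonal blocks from the matching, unmatched intervals collapsed to their diagonal point, infinite bars forced to pair up since the number of infinite bars is a rank invariant of the underlying matrix) is essentially identical to the paper's, establishing $\hat d_\mcI^p\le d_\W^p$.

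For the hard direction $d_\W^p\le\hat d_\mcI^p$, you take a genuinely different route. The paper does not cite \cref{Thm:Skraba_Turner_Stability} as a black box; it re-runs the Skraba–Turner interpolation strategy purely at the level of presentation matrices, linearly interpolating $\labels(P_M)\to\labels(P_N)$, subdividing $[0,1]$ at the finitely many times where label order changes, and using that between such times a single sequence of admissible row/column operations puts all the interpolated matrices into graded Smith normal form simultaneously (\cref{Lem:Refinements_And_Admissibility}, \cref{Prop:SNF}), from which the Wasserstein cost of each local step is bounded by the $\ell^p$-change in labels. You instead realize $P_M,P_N$ geometrically as degree-$1$ filtered cellular homology — the same wedge-of-circles-with-$2$-cells construction the paper uses in \cref{Lem:Universality_Lifting} for the universality direction — and invoke \cref{Thm:Skraba_Turner_Stability}. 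That is a legitimate deduction when it goes through and is slicker than reproving interpolation.

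However, your closing remark understates the obstruction. A cellular boundary matrix over $k$ is the reduction of an integer matrix, so the realization forces the underlying matrix $A$ to have integer-liftable entries (modulo harmless column scaling). This works for prime $k$ and for $k=\mathbb{Q}$, exactly as the paper restricts in \cref{Lem:Universality_Lifting}, but an arbitrary presentation matrix over, say, $\mathbb{C}$ or $\mathbb{F}_p(t)$ cannot be realized as $\partial_2$ of a CW-complex, and you cannot simply pass to SNF first because the same operations need not be admissible for both label vectors. Since \cref{Thm:Main_Thm_Intro}\,(iv) is stated — and used downstream, e.g.\ in the proof of (iii) and in \cref{Prop:MAtching_Distance_Approx} — over an arbitrary coefficient field, this is a genuine gap in your argument, not merely a technical loose end; the paper's purely algebraic interpolation sidesteps it entirely. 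If you retain the geometric route you should state the field restriction as a hypothesis, or supply the interpolation argument at the chain-complex level over $k$ rather than over $\mathbb{Z}$.
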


The content of the theorem can be summarized by placing the expression $d_\match^p\leq d_{\mcI}^p$ in the lower right corner of \cref{Table:Generalizations_of_Bottleneck}, as we have done. 

We prove (i) and the first equality of (iv) in \cref{Sec:PresWDist}; (iii) is proven in \cref{Sec:W_matching}; and (ii) and the second equality of (iv) are immediate from the definitions. A proof of a generalization of (i) is implicitly given in \cite[Section 4]{lesnick2015theory} as part of the proof of \cref{Thm:Stability_and_Universality_of_The_Interleaving_Distance}\,(ii). That proof of (i) is somewhat technical; here we provide a more intuitive proof.  
Our proof of (iv) is similar to the proof of \cref{Thm:Skraba_Turner_Stability} given in \cite{skraba2020wasserstein}.

Together, \cref{Thm:Main_Thm_Intro}\,(i) and the $p=\infty$ case of \cref{Thm:Main_Thm_Intro}\,(iv) imply the isometry theorem (i.e., that $d_{\mcI}=d_{\W}^\infty$) for finitely presented 1-parameter persistence modules.  
In fact, our proofs of  \cref{Thm:Main_Thm_Intro}\,(i) and (iv) together amount to a novel proof of the isometry theorem.  While there already exist several good proofs of the isometry theorem which work in greater generality \cite{bauer2015induced,bauer2016persistence,bjerkevik2016stability,chazal2012structure}, we feel that the proof given here may be of interest, because it is simple, intuitive, and rests on established facts known to be useful elsewhere.  
Our proof is similar in some respects to existing proofs of algebraic stability and stability for $\R$-valued functions; specifically, \cite{cohen2006vines,skraba2020wasserstein} use a very similar interpolation strategy, and \cite{chazal2009proximity,chazal2012structure,cohen2007stability} use a somewhat similar one.

As an application of the first equality of  \cref{Thm:Main_Thm_Intro}\,(iv), we observe that on bipersistence modules, $d_\match^p$ can be efficiently approximated, up to arbitrarily small error, by a simple extension of the algorithm of \cite{kerber2020efficient} for approximating $d_{\match}$; see \cref{Sec:Computation} for details, including a precise runtime bound.  

In view of what is known about computation of $d_{\mcI}$ and $d_{\match}$, the following conjecture seems natural, and is left to future work:
\begin{conjecture}\label{Conj:Computational_Conjectures}\mbox{}
\begin{itemize}
\item[(i)] For fixed $n\geq 1$, the distance $d^p_{\match}$ on finitely presented $n$-parameter persistence modules is exactly computable in polynomial time. 
\item[(ii)] Computing $d_{\mcI}^p$ on finitely presented $2$-parameter persistence modules is NP-hard, for all $p\in [1,\infty]$.
\end{itemize}
\end{conjecture}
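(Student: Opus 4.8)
\textbf{Proof proposal for \cref{Conj:Computational_Conjectures}.}

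\emph{This is a conjecture, not a theorem, so the "proof" sketched here is a plan of attack rather than a guaranteed argument; I describe the strategy I would pursue for each part.}

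For part (i), the plan is to mimic the structure of Kerber--Nigmetov's exact polynomial-time algorithm for $d_\match$ on bipersistence modules \cite{kerber2019exact} and extend it to general $n$ and general $p$. Recall that $d_\match^p(M,N)$ is the supremum, over all monotone affine lines $\ell\colon\R\to\R^n$, of a suitably normalized $p$-Wasserstein distance between the 1-parameter restrictions $M^\ell$ and $N^\ell$. The first step is to show that, for finitely presented $M$ and $N$, the function $\ell\mapsto d_\W^p(M^\ell,N^\ell)$ (after normalization) is piecewise-algebraic in the slope/offset parameters of $\ell$, with a polynomial-size arrangement of pieces determined by the (finitely many) generator and relation labels of $M$ and $N$. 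On each cell of this arrangement, the combinatorial type of the barcodes $\barc(M^\ell)$ and $\barc(N^\ell)$ is fixed, and the optimal Wasserstein matching has a fixed combinatorial structure; within a cell one then has a small optimization problem (maximizing a fixed algebraic expression in the line parameters) that can be solved exactly. The main obstacle here is twofold: first, bounding the complexity of the arrangement for $n\geq 3$ (in the $n=2$ case the line space is $2$-dimensional and the analysis in \cite{kerber2019exact} is already delicate); and second, handling the $p$-Wasserstein cost, whose optimal matching can change combinatorial type \emph{within} a cell where the barcodes themselves are combinatorially stable, since the matching depends on metric and not just order data. One would need to argue that these secondary subdivisions are still polynomially bounded, perhaps by exploiting that the endpoints of bars are affine functions of the line parameters, so matching costs are piecewise-polynomial of bounded degree.

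For part (ii), the plan is to reduce from the known NP-hardness of computing $d_\mcI=d_\mcI^\infty$ on 2-parameter persistence modules \cite{bjerkevik2019computing}. The cleanest hope would be a gadget reduction: given an instance of the problem shown NP-hard in \cite{bjerkevik2019computing}, construct (in polynomial time) finitely presented bipersistence modules $M',N'$ such that $d_\mcI^p(M',N')$ encodes the answer, using the fact that $d_\mcI^p = d_\mcI$ when $p=\infty$ as a base case and then "spreading out" the relevant discrepancy across many incomparable grades so that the $\ell^p$-aggregation behaves like an $\ell^\infty$ one up to a known scaling factor. Concretely, if a yes-instance forces a single unavoidable "error" of size $\epsilon$ and a no-instance forces none, one can replicate the construction $m$ times at mutually incomparable grades so that $\hat d_\mcI^p$ scales like $m^{1/p}\epsilon$ versus $0$, preserving a gap; one then has to check this survives passage from $\hat d_\mcI^p$ to $d_\mcI^p$ (the metric-coreflection step in the definition), which is the delicate point since the triangle-inequality correction could in principle shrink the gap. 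Alternatively, one could try to directly adapt the reduction of \cite{bjerkevik2019computing} and re-verify that its soundness and completeness arguments, which are stated for interleavings, go through when distances between labels are aggregated in $\ell^p$ rather than $\ell^\infty$; this is plausible because that reduction's hard instances tend to concentrate the obstruction in essentially one place.

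The step I expect to be the genuine bottleneck is, for part (i), controlling the number of combinatorial regimes of the optimal $p$-Wasserstein matching as the restriction line varies --- this is exactly the feature that makes $d_\match^p$ harder to handle than $d_\match$, and it has no analogue in \cite{kerber2019exact}. For part (ii), the bottleneck is verifying that the NP-hardness gap is not destroyed by the metric coreflection in the definition of $d_\mcI^p$; a safe route is to engineer the reduction so that the relevant modules are already "close to" ones on which $\hat d_\mcI^p$ is itself a metric (e.g. interval-decomposable modules with controlled overlap structure), so that $\hat d_\mcI^p = d_\mcI^p$ on the constructed instances and the reduction from \cite{bjerkevik2019computing} transfers with only bookkeeping changes.
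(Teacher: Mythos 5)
The statement you are addressing is not proven in the paper at all: it appears as \cref{Conj:Computational_Conjectures} and is explicitly ``left to future work,'' so there is no argument of the authors to compare yours against. Your submission, by your own framing, is a research plan rather than a proof, and as it stands neither part is established; the two bottlenecks you flag are precisely the open problems, and they are not resolved by the proposal.

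Concretely, for (i): the exact polynomial-time algorithm you want to mimic, \cite{kerber2019exact}, handles only $n=2$ and $p=\infty$ (note also a small attribution slip: \cite{kerber2019exact} is not the Kerber--Nigmetov paper; that is the approximation algorithm \cite{kerber2020efficient}). The paper itself points out that even the $p=\infty$ computability results are unproven for $n\geq 3$, so the claim ``for fixed $n\geq 1$'' already requires controlling an arrangement in a higher-dimensional space of admissible lines for which no existing analysis applies; on top of that, your own observation that the optimal $p$-Wasserstein matching can change combinatorial type within a cell where the barcodes are order-stable is exactly the step with no analogue in the $p=\infty$ analysis, and you give no bound on the number of such secondary regimes. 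For (ii): replicating a gadget at $m$ incomparable grades does scale $\hat d_{\mcI}^p$ like $m^{1/p}\epsilon$, but NP-hardness needs a lower bound on $d_{\mcI}^p$, which is an infimum over arbitrary finite chains of intermediate modules (\cref{Def:Pres_Distance}); the paper's only general lower-bound tools for finite $p$ are $d_{\mcI}^p\geq d_{\match}^p$ and $d_{\mcI}^p \geq d_{\mcI}^\infty = d_{\mcI}$, and \cref{Ex:Triangle_eq_fails} shows the coreflection can genuinely shrink $\hat d_{\mcI}^p$, so your proposed fix --- engineering instances on which $\hat d_{\mcI}^p=d_{\mcI}^p$ --- is itself an unproven assertion that would need its own argument. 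In short, the proposal is a reasonable outline of how one might attack the conjecture, but both parts remain open, exactly as the paper leaves them.
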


Our final result, which is perhaps the central result of the paper, concerns the stability and universality of the presentation distance on 1- and 2-parameter persistence modules.  The result extends Skraba and Turner's cellular $\ell^p$-stability result \cref{Thm:Skraba_Turner_Stability}, and is a close analogue of \cref{Thm:Stability_and_Universality_of_The_Interleaving_Distance}. 

In the statement, we adopt the convention that $\frac{1}{\infty}=0$.

\begin{theorem}[Stability and Universality of Presentation Distances]\label{Thm:Universality}
For any $p\in [1,\infty]$ and $n\in \{1,2\}$,
\begin{itemize}
\item[(i)] $d_{\mcI}^p$ is $p$-stable, and also $p$-stable across degrees with constant $n^\frac{1}{p}$, on finitely presented $n$-parameter persistence modules,
\item[(ii)] if the field of coefficients $k$ is prime and $d$ is any $p$-stable distance on finitely presented $n$-parameter persistence modules, then $d\leq d_{\mcI}^p$.
\end{itemize}
\end{theorem}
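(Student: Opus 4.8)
The plan is to prove (ii) by a realization argument and (i) by reduction to \cref{Thm:Skraba_Turner_Stability} when $n=1$ and by a ``vineyard''-style interpolation when $n=2$.

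\smallskip
\noindent\emph{Universality (ii).} A distance is an extended pseudometric, and $d_{\mcI}^p$ is by construction the largest extended pseudometric bounded above by $\hat d_{\mcI}^p$; hence it suffices to show $d(M,N)\le\hat d_{\mcI}^p(M,N)$ for every $p$-stable $d$ and all finitely presented $M,N$, and for this it is enough to fix presentation matrices $P_M,P_N$ with a common underlying $k$-matrix and prove $d(M,N)\le d^p(P_M,P_N)$. I would realize $M$ and $N$ together as degree-$1$ sublevel homology of one finite CW-complex $X$ built from the shared matrix: one $0$-cell, a loop $\sigma_j$ per row, a $2$-cell $\tau_\ell$ per column attached along $\prod_j\sigma_j^{m_{j\ell}}$ where $m_{j\ell}\in\Z$ lifts the $(j,\ell)$-entry — such a lift exists precisely because $k$ is prime, which is the sole use of primality. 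Then $\partial_1=0$ and $\partial_2$ reduces to the given matrix, so sending $\sigma_j,\tau_\ell$ to their $P_M$-labels and the $0$-cell to the coordinatewise minimum of all row-labels of $P_M$ and $P_N$ defines a monotone $f\colon\Cells(X)\to\R^n$ (monotonicity is exactly the validity of $P_M$ as a presentation matrix) with $H_1\mathcal S(f)\cong M$; the analogous $g$ gives $H_1\mathcal S(g)\cong N$. Since the $0$-cell gets the same value under $f$ and $g$, $\|f-g\|_p=d^p(P_M,P_N)$, so $p$-stability of $d$ gives $d(M,N)\le\|f-g\|_p=d^p(P_M,P_N)$; taking the infimum over $P_M,P_N$ finishes (ii). (Nothing here uses $n\le2$.)

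\smallskip
\noindent\emph{Stability (i).} For $n=1$, \cref{Thm:Main_Thm_Intro}\,(iv) identifies $d_{\mcI}^p$ with $d_{\W}^p$ on finitely presented $1$-parameter modules — exactly the modules occurring as $H_i$ of finite filtered CW-complexes — so \cref{Thm:Skraba_Turner_Stability} immediately gives the claim with constant $1^{1/p}=1$. For $n=2$ I would interpolate. Fix a finite CW-complex $X$ and monotone $f,g\colon\Cells(X)\to\R^2$; using $d_{\mcI}^p\le\hat d_{\mcI}^p$ and the continuity of $\hat d_{\mcI}^p$ under small combinatorial-type-preserving perturbations, a routine reduction lets us assume $f,g$ are injective and hence induce total orders on $\Cells(X)$ refining the face relation. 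Connecting these orders by transpositions of consecutive non-face-related cells, I would track a minimal presentation of $H_i\mathcal S(h_t)$ along $h_t=(1-t)f+tg$ (each $h_t$ is monotone, as convex combinations preserve monotonicity): between consecutive critical values of $t$ the combinatorial type of the presentation is constant, so one can pick presentation matrices $P_i(t)$ with a single underlying matrix and labels affine in $t$, and at a critical value the one-sided limits of the presentations agree. Chaining over the pieces by the triangle inequality for $d_{\mcI}^p$ together with $d_{\mcI}^p(H_i\mathcal S(h_s),H_i\mathcal S(h_t))\le\hat d_{\mcI}^p(H_i\mathcal S(h_s),H_i\mathcal S(h_t))\le d^p(P_i(s),P_i(t))$ then bounds $d_{\mcI}^p(H_i\mathcal S(f),H_i\mathcal S(g))$ by the total $\ell^p$-movement of the presentation labels.

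\smallskip
\noindent\emph{Main obstacle.} The decisive and most delicate step is to show this total label movement is at most $\|f-g\|_p$ in each fixed degree and at most $2^{1/p}\|f-g\|_p$ when summed over all degrees. Heuristically, in degree $i$ an $i$-cell contributes only to generator-labels and an $(i+1)$-cell only to relation-labels of the presentation of $H_i$, so each cell is charged once (constant $1$), while over all degrees a $d$-cell is charged in degree $d$ (as a generator) and in degree $d-1$ (as a relation), giving $n^{1/p}=2^{1/p}$. Making this rigorous requires choosing the minimal $2$-parameter presentations, and identifying their labels across each transposition, so that one swap of cells $\sigma,\tau$ moves the label set by no more than the change in the values of $\sigma$ and $\tau$ — a presentation-level, multiparameter version of the Skraba--Turner transposition bookkeeping, and exactly the step where the hypothesis $n\le2$, via the structure theory of $2$-parameter presentations, is genuinely used.
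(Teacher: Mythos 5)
Your proof of (ii) is essentially the paper's proof: the realization of both modules as $H_1$ of sublevel filtrations on a CW-complex with one $0$-cell, a loop per row, and a $2$-cell per column glued along an integral (or column-rescaled, if $k=\mathbb Q$) lift of the shared matrix is exactly \cref{Lem:Universality_Lifting}, and combining with \cref{Prop:Largest_Lower_Bound} closes the argument in the same way. Your reduction of (i) for $n=1$ to \cref{Thm:Skraba_Turner_Stability} via \cref{Thm:Main_Thm_Intro}\,(iv) is a valid route and is explicitly acknowledged in the paper's remarks as an alternative to its direct proof.

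For (i) with $n=2$, however, you have correctly located the ``main obstacle'' but have not resolved it, and the resolution is where essentially all of the work lies. Tracking a \emph{minimal} presentation of $H_i\mathcal S(h_t)$ across transpositions, as you propose, is problematic as stated: the rank and combinatorial type of a minimal presentation of a bipersistence module can jump at critical parameters, so ``the one-sided limits of the presentations agree'' and the claim that one swap moves the labels by at most the change in $f(\sigma),f(\tau)$ are exactly what needs proving, not what you can assume. The paper avoids minimal presentations entirely. After reducing (by linear interpolation) to the case where $g$ is $f$-compatible, it expresses $\mathcal S(g)$ as a left Kan extension of $\mathcal S(f)$ along a grid function, then presents $H_j$ using a chosen basis $Z^f$ of $\ker\partial_j^f$ as generators and the canonical basis of $\C_{j+1}(f)$ as relations; the induced presentation of $H_j\mathcal S(g)$ then has the same underlying matrix by construction. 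The label-movement bound rests on two $2$-parameter facts absent from your sketch: (a) kernels of morphisms of finitely generated free bipersistence modules are free (\cref{Lem:Free_Kernels}, from Chach\'olski et al.), so $Z^f$ exists and has a fixed cardinality; (b) taking a Gr\"obner basis of the kernel with respect to a colexicographic order on the domain basis $B$ yields \emph{injective} maps $j^x,j^y\colon Z^f\to B$ with $c_x=j^x(c)_x$ and $c_y=j^y(c)_y$ (\cref{Lem:Injective_Maps}), and injectivity is precisely what makes the $\ell^p$-sum over kernel labels comparable, coordinate by coordinate, to the $\ell^p$-sum over cell values (\cref{Lemma:Stability_of_Kernel2}). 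This is the bookkeeping your heuristic invokes but does not justify, and it is also the only place the hypothesis $n\le 2$ actually enters; your sketch has left out precisely the step where that hypothesis does its work.
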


\begin{remarks}\mbox{}
\begin{enumerate}[(i)]
\item For both the cases $n=1$ and $n=2$, the constant $n^\frac{1}{p}$ appearing in \cref{Thm:Universality}\,(i) is tight; see \cref{Prop:Tightness}. 
\item In the case $n=1$, \cref{Thm:Universality}\,(ii) in fact holds for arbitrary fields $k$.
\item In the case $n=1$, \cref{Thm:Universality}\,(i) is an immediate consequence of Skraba and Turner's result \cref{Thm:Skraba_Turner_Stability} and \cref{Thm:Main_Thm_Intro}\,(iv), but can be proven more directly via an argument similar to the proof of \cref{Thm:Skraba_Turner_Stability}; see \cref{Sec:Proof_of_Stability}.
\item \cref{Thm:Main_Thm_Intro}\,(iii) implies that \cref{Thm:Universality}\,(i) also holds if $d_\mcI^p$ is replaced by $d_\match^p$.
\item Note that in the case $p=\infty$, \cref{Thm:Universality} is somewhat weaker than \cref{Thm:Stability_and_Universality_of_The_Interleaving_Distance}: \cref{Thm:Stability_and_Universality_of_The_Interleaving_Distance} holds for persistence modules with an arbitrary number of parameters, and does not require any finiteness assumptions.  In addition, \cref{Thm:Stability_and_Universality_of_The_Interleaving_Distance}\,(i) holds for the sublevel filtrations of arbitrary $\R^n$-valued functions, not only for monotone functions on cell complexes.  
\end{enumerate}
\end{remarks}

\subsection{Other Approaches to Defining $\ell^p$-Distances}\label{Sec:Related_Work}
The problem of generalizing the Wasserstein distance on barcodes was first studied by Bubenik, Scott, and Stanley \cite{bubenik2018wasserstein}, who consider the variant of $d_{\W}^p$ defined in terms of the $\ell^1$-norm on $\R^2$.  They introduce and study a generalization of this to diagrams of vector spaces indexed by an arbitrary small category, which includes the case of multiparameter persistence modules.  In this case, their distance, which we will denote as $D^p$, is not equal to either $d^p_{\match}$ or $d^p_\mcI$ for any $p$, and has rather different properties.  For example, $d_\mcI^p$ and $d^p_{\match}$ are always finite on finitely generated, free $\R^n$-indexed or $\Z^n$-indexed persistence modules with the same number of generators, but when $n\geq 2$, $D^p$ is infinite unless the modules are isomorphic.  Another qualitative difference is that for indecomposable persistence modules $M$ and $N$, $D^p(M,N)$ is in fact independent of $p$, but our distances generally are not.  
It is shown in \cite{bubenik2018wasserstein} that $D^p$ satisfies a universal property, though this is rather different than the universal property of \cref{Thm:Universality}\,(ii).

Skraba and Turner also consider generalizing the Wasserstein distance in \cite[Section 8]{skraba2020wasserstein}: They introduce an alternative definition $d_{\mathcal A}^p$ of the Wasserstein distance $d_\W^p$ on 1-parameter persistence modules and show that indeed  $d_{\mathcal A}^p=d_{\mathcal W}^p$ \cite[Theorems 8.25 and 8.26]{skraba2020wasserstein}.  This result generalizes the algebraic stability theorem.  The authors then study the question of generalizing $d_{\mathcal A}^p$ to other abelian categories, giving axioms on such a generalization which ensure that it satisfies the triangle inequality.  However, no generalization of $d_{\mathcal A}^p$ to multiparameter persistence modules is given in \cite{skraba2020wasserstein}, and the question of how to define one is mentioned as a direction for future work.  
Previous work by Scolamiero et al. introduced a similar (but not identical) axiomatic approach to defining distances on multiparameter persistence modules \cite[Definition 8.6]{scolamiero2017multidimensional}, though this work does not consider $\ell^p$-distances.  The definition of generalized $p$-Wasserstein distance appearing in \cite{bubenik2018wasserstein} also uses a construction very similar to \cite[Definition 8.6]{scolamiero2017multidimensional}. 

Recent work of Giunti et al. introduces a general framework for defining metrics on multiparameter persistence modules which extends each of 
the approaches discussed above and appears to yield novel $\ell^p$-distances \cite[Example~3.34, Remark 3.35, and Definition 4.9]{giunti2021amplitudes}. 

Another approach to defining $\ell^p$-type distances on multiparameter persistence modules is to vectorize the modules, i.e., to specify a map from modules into a vector space equipped with a $p$-norm; the induced metric on the vector space then pulls back to a distance on the modules.  Several such maps have been proposed and applied to data.  One very simple approach is to define the map in term of the Hilbert function, i.e., the dimension of the vector space at each index \cite{keller2018persistent,betancourt2018pseudo}.  Alternatively, when working with homology in all degrees, one can instead take the Euler characteristic at each index \cite{beltramo2021euler}.  As these approaches do not depend on the internal linear maps in the persistence modules, they are rather coarse.  Other vectorizations have been proposed that do depended on the internal maps: Vipond's \emph{multiparameter persistence landscapes} \cite{vipond2018multiparameter,vipond2021multiparameter} and Carri\'ere and Blumberg's \emph{multiparameter persistence images} \cite{carriere2020multiparameter} consider the internal maps of the module only along a fixed direction, whereas the kernel construction of Corbet et al. \cite{corbet2019kernel} considers the internal maps in all directions.  

Stability results have been given for each of the last three approaches, though these have some key limitations.  To elaborate, \cite{vipond2018multiparameter} shows that the multiparameter persistence landscapes are 1-Lipschitz stable with respect to the interleaving distance on modules and the $L^\infty$-distance on landscapes.  On the other hand, \cite[Section 7.2]{skraba2020wasserstein} observes that for all $p\in [1,\infty)$, 1-parameter persistence landscapes are not H\"older continuous with respect to the Wasserstein distance $\W_p$ on barcodes and the $L^p$-distance on landscapes.  A stability bound for the kernel construction of \cite{corbet2019kernel} is given with respect to the matching distance on modules and the $L^2$-distance, but the bound involves a constant which depends on the size of the input and can be quite large.  While the multiparameter persistence images of \cite{carriere2020multiparameter} are unstable in general, the authors give a partial stability result under special conditions \cite[Supplementary Material]{carriere2020multiparameter}.

A potential practical issue with some of the distances described above, e.g., the $L^p$-distance on Hilbert functions, is that they are often infinite on modules with unbounded support.  In an effort to address this issue, Miller and Thomas \cite{primary-distance} use primary decomposition to construct modified distances which are finite on a larger class of persistence modules, including all finitely generated modules.  A preliminary exposition of the ideas appears in Thomas's Ph.D. thesis
 \cite[\S4.3]{thomas2019invariants}.

\subsection{Applications}\label{Sec:Applications}
We describe three potential directions for applications of the distances introduced in this work.  A first natural direction is to use $d_{\mcI}^p$ to formulate stability and inference results for multiparameter persistent homology.  Some such results for the \emph{multicover bifiltration} will appear in a companion paper \cite{bjerkevikMulticover}, and are outlined in \cref{Sec:Multicover_Outline} below.

Second, we imagine that the $p$-matching distance for small $p$ could be used in practical applications of multiparameter persistent homology, in much the same way that the $p$-Wasserstein distance on barcodes has been used.  To elaborate, while $d_\match$ is a natural candidate for such practical use, it inherits some features of $d_{\W}^\infty$ that may be undesirable, namely, sensitivity to outlying algebraic structure and insensitivity to small features.  Thus, in analogy with the 1-parameter case, where $d_{\W}^1$ or $d_{\W}^2$  is often preferred over $d_{\W}^\infty$, one imagines that a variant of $d_\match$ generalizing $d_{\W}^p$ might perform better in applications than $d_\match$.  One example of a potential application along these lines is the \emph{virtual screening problem} in computational chemistry, i.e., the problem of identifying drug candidates from a large database of ligands.  The matching distance on 2-parameter persistence modules has been applied to this problem in \cite{keller2018persistent}.  (Persistent homology has also been applied to the problem in \cite{cang2018representability}.)  We hypothesize that $d_\match^1$ and $d_\match^2$ would outperform $d_\match$ in this application, though we do not explore this, or any other application to real data, in the present paper. 

A third potential direction, also not explored here, is to use the distance $d_{\mcI}^p$ (particularly, $d_{\mcI}^1$) to formulate stability results for vectorizations of multiparameter persistence modules.  As mentioned in \cref{Sec:Generalizations}, in the 1-parameter case there exist several such stability results for the $1$-Wasserstein distance on barcodes.  One might hope that some of these extend to the multiparameter setting.

\subsection{$\ell^p$-Stability of Multicover Persistence}\label{Sec:Multicover_Outline}
As mentioned above, in \cite{bjerkevikMulticover} we apply the distances $d_{\mcI}^p$ and $d_\match^p$ to study the stability and continuity of the \emph{multicover persistent homology} of point cloud data.  To motivate the results of this paper, we briefly describe this application. 

Multicover persistent homology is a natural 2-parameter extension of the standard union-of-balls construction of persistent homology \cite{sheehy2012multicover,chazal2011geometric}. It takes into account the number of times a point is covered by a ball, and thus is density-sensitive in a way that the standard construction is not.  Recent work has shown that multicover persistent homology is computable, at least for modestly sized data embedded in a low-dimensional Euclidean space \cite{corbet2021computing,edelsbrunner2020simple}.    

It was also recently shown that multicover persistent homology satisfies a 1-Lipschitz stability property closely analogous to the usual stability results for persistent homology \cite{blumberg2020stability}.  The property is stated in terms of the Prohorov distance $\pi$, a classical distance on probability measures, and the interleaving distance $d_{\mcI}$.   The result tells us in particular that, unlike the 1-parameter union-of-balls persistence, multicover persistence is robust to outliers.  
However, because this stability result is formulated in terms of $d_{\mcI}$, it is rather coarse, in a sense: According to the characterization of $d_{\mcI}$ provided by \cref{Thm:Main_Thm_Intro}\,(i), $d_{\mcI}$ can be defined in terms of a sup-norm on the grades of generators and relations, and thus is sensitive only to the largest perturbation of such  grades, not to the number of perturbations. Thus, the stability result of  \cite{blumberg2020stability} says nothing about how many small-scale algebraic changes to the persistent homology can result from a small change to the data.
In fact, a computational example in \cite{corbet2021computing} indicates that the addition of even a few random outliers to a data set can generate quite a lot of small-scale algebraic noise in the multicover persistent homology.  This raises the question of whether, by using different metrics on persistence modules, we can develop a more nuanced picture of the stability of multicover persistence, in which such noise is visible.

In answer to this, we show in \cite{bjerkevikMulticover} that when formulated using our distances $d_{\mcI}^p$ and $d_\match^p$ for $p<\infty$, the stability story for multicover persistent homology looks rather different than it does for $d_{\mcI}=d_{\mcI}^\infty$.  Specially, we show that for each $p\in [1,\infty)$ and $m\geq 2p$, multicover persistent homology of points in $\R^m$ is discontinuous with respect to $\pi$ and $d_\mcI^p$.  However, if we restrict to point clouds of bounded cardinality, then multicover persistent homology is Lipschitz continuous for all $p\in [1,\infty)$.  The same results also hold using $d^p_{\match}$ in place of $d_\mcI^p$, or using the Wasserstein distance on probability measures in place of $\pi$.

\subsection*{Acknowledgements}
We thank Andrew Blumberg for valuable discussions about matters related to this work and the companion work \cite{bjerkevikMulticover}.  We also thank Barbara Giunti and Nina Otter for sharing insights into related work on metrics for generalized persistence modules, and we thank Ezra Miller for helpful feedback on a draft of this paper. The first author is supported by the Austrian Science Fund (FWF) grant number P 33765-N.

\section{Preliminaries}
\subsection{Persistence Modules}
A category $C$ is said to be \emph{thin} if for every pair of objects $a,b$ in $C$, $\Hom(a,b)$ contains at most one morphism.  Any poset $(X,\leq)$ can be regarded as a thin category with object set $X$, by taking $\Hom(a,b)$ to be non-empty if and only if $a\leq b$. 

For $C$ a small, thin category, we define a \emph{$C$-persistence module} to be a functor $M\colon C\to \Vect$, where $\Vect$ denotes the category of $k$-vector spaces over some fixed field $k$. Thus, $M$ consists of a vector space $M_a$ for each object $a$ of $C$, and a linear map $M_{a\subto b}$ for each morphism $a\to b$ in $C$, such that $M_{a\subto a}=\id_{M_a}$ and $M_{b\subto c}\circ M_{a\subto b}=M_{a\subto c}$. When the category $C$ is understood, we often omit $C$ and simply call $M$ a ``persistence module'' or even just a ``module".  If $\dim(M_a)<\infty$ for all $a$, then we say that $M$ is \emph{pointwise finite dimensional}, or \emph{\pfd}

A morphism $\gamma \colon M\to N$ of $C$-persistence modules is a natural transformation, i.e., a choice of a linear map $\gamma_a \colon M_a\to N_a$ for each object $a$ of $C$ such that for each morphism $a\to b$ in $C$, \[N_{a\subto b}\circ \gamma_a=\gamma_b\circ M_{a\subto b}.\]  With this definition of morphism, the $C$-persistence modules form an abelian category $\Vect^C$.  Hence, many standard constructions of  abstract algebra 
are well-defined in $\Vect^C$, e.g., submodules, quotients, kernels, images, and direct sums.  The definitions of these are given objectwise.  For example, the direct sum $M\oplus N$ of persistence modules $M$ and $N$ is given by 
\[(M\oplus N)_{a}=M_a\oplus N_a\qquad (M\oplus N)_{a\subto b}=M_{a\subto b}\oplus N_{a\subto b}.\]
Similarly, the kernel of a morphism $\gamma \colon M\to N$ of persistence modules, denoted $\ker \gamma$, is given by 
\[(\ker \gamma)_a=\ker \gamma_a,\]
with the internal maps taken to be the restriction of those in $M$.

In this paper, we are interested primarily in $\R^n$-persistence modules, where $\R^n$ is given the partial order of \cref{Sec:Stability_Intro}.  We sometimes call these \emph{$n$-parameter persistence modules}.  Interleavings, defined in \cref{Sec:Interleavings} below, are a second example of persistence modules that we will want to consider.

\begin{definition}\label{Def:Interval_Module}
An \emph{interval} in $\R$ is a non-empty connected subset.  For $I\subset \R$ an interval, define the \emph{interval module} $k^I$ to be the 1-parameter persistence module such that 
\begin{align*}
k^I_a&=
\begin{cases}
k &{\textup{if }} a\in I, \\
0 &{\textup{ otherwise}.}
\end{cases}
& k^I_{a\subto b}=
\begin{cases}
\mathrm{Id}_k &{\textup{if }} a\leq b\in I,\\
0 &{\textup{ otherwise}.}
\end{cases}
\end{align*}
\end{definition}

\begin{theorem}[Structure of 1-Parameter Persistence Modules \cite{crawley2012decomposition}]\label{Thm:ordinary_Structure}
If $M$ is a \pfd $\R$-persistence module, then there exists a unique multiset $\B{}_M$ of intervals in $\R$ such that 
\[M\cong {\oplus_{I\in \B{}_M}} k^I.\]
We call $\B{}_M$ the \emph{barcode of $M$}.
\end{theorem}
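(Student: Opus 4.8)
The plan is to establish the two claims of the theorem --- existence of an interval decomposition and uniqueness of the multiset $\mathcal B_M$ --- separately, with the existence part carrying the real content. For uniqueness, I would first observe that $\operatorname{End}(k^I)\cong k$ for every interval $I$: an endomorphism of $k^I$ is, at each $a\in I$, multiplication by some scalar $\lambda_a\in k$, and naturality with respect to the structure maps of $k^I$, which are identities throughout $I$, forces all the $\lambda_a$ to coincide. In particular $\operatorname{End}(k^I)$ is a local ring, and clearly $k^I\cong k^J$ if and only if $I=J$. Uniqueness of $\mathcal B_M$ then follows from Azumaya's theorem --- the refinement of Krull--Remak--Schmidt valid for arbitrary direct sums of modules with local endomorphism rings --- which says that any two such decompositions of a fixed module are related by a bijection of the index sets matching each summand to an isomorphic one.

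For existence, I would reduce to two statements: (a) every \pfd $\R$-persistence module is a direct sum of indecomposable submodules, each with local endomorphism ring; and (b) every indecomposable \pfd $\R$-persistence module is isomorphic to an interval module $k^I$. Granting these, one decomposes $M$ by (a) and names the summands by (b). I would prove (b) as follows. Let $M$ be indecomposable, and put $I=\{a\in\R\mid M_a\neq 0\}$. If $I$ were disconnected, say $a<b<c$ with $M_a\neq 0\neq M_c$ but $M_b=0$, then the objectwise truncations $M^{<b}$ and $M^{\ge b}$ would be submodules of $M$ --- every structure map of $M$ crossing $b$ factors through $M_b=0$ and so vanishes --- giving $M\cong M^{<b}\oplus M^{\ge b}$ with both summands nonzero, contrary to indecomposability; hence $I$ is an interval. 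Next I claim that every internal map $M_{x\subto y}$ with $x\le y$ in $I$ is an isomorphism. This is where the \pfd hypothesis is essential: the images of the maps into a fixed $M_y$, and the kernels of the maps out of a fixed $M_x$, are chains of subspaces of finite-dimensional vector spaces and hence stabilize, and from a pair $x\le y$ at which $M_{x\subto y}$ fails to be injective or surjective one can produce a nontrivial idempotent endomorphism of $M$, again contradicting indecomposability. Once all structure maps on $I$ are isomorphisms, transporting a basis of $M_{x_0}$ for a fixed $x_0\in I$ along them identifies $M$ with $(k^I)^{\oplus m}$ where $m=\dim M_{x_0}$, and indecomposability forces $m=1$, so $M\cong k^I$.

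The step I expect to be the main obstacle is (a). One cannot simply invoke the ordinary Krull--Remak--Schmidt theorem, since $M$ can have infinitely many summands and need not be of finite length; moreover the decomposition genuinely fails without the \pfd hypothesis. The route I would follow is that of Crawley-Boevey \cite{crawley2012decomposition}: first show that for a \pfd module $M$ and any nonzero $v\in M_x$ there is a direct summand of $M$ of the form $k^I$ whose value at $x$ contains $v$ --- again exploiting pointwise finite-dimensionality to build the needed idempotents --- and then assemble a decomposition of all of $M$ into interval summands by a transfinite exhaustion (Zorn's lemma) argument, the delicate point being to check at limit stages that the accumulated partial direct sum remains a direct summand of $M$. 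Alternatively, one verifies that the category of \pfd $\R$-persistence modules meets a general categorical criterion ensuring a decomposition into objects with local endomorphism rings, which can then be combined with (b).
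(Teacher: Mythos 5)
This statement is not proved in the paper: it is quoted as a known theorem with a citation to Crawley--Boevey \cite{crawley2012decomposition}, so there is no in-paper proof to compare yours against. With that caveat, here is an assessment of your sketch on its own merits.

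Your uniqueness argument is correct and standard: $\operatorname{End}(k^I)\cong k$ is local, and Azumaya's theorem then gives uniqueness of any decomposition into summands with local endomorphism rings. The support-connectedness argument in (b) is also correct.

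Where your outline diverges from the cited source, and where the real gaps lie, is in the split into (a) ``decompose into indecomposables with local endomorphism rings'' and (b) ``classify indecomposables as interval modules.'' Crawley--Boevey's 2012 proof does \emph{not} proceed this way; it constructs the interval decomposition directly, via a functorial filtration built from the subspaces $\operatorname{Im}^{\pm}$ and $\operatorname{Ker}^{\pm}$ (limits of images and kernels along the order), rather than first invoking an abstract decomposition into indecomposables. The general existence-of-decomposition statement you call (a) is itself a nontrivial theorem and is the subject of the later Botnan--Crawley-Boevey paper on decomposition over arbitrary posets, not of the 2012 reference.

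More substantively, your step in (b) --- ``from a pair $x\le y$ at which $M_{x\subto y}$ fails to be injective or surjective one can produce a nontrivial idempotent endomorphism'' --- is where nearly all the content of the theorem is concentrated, and you give no indication of how to carry it out. Failure of injectivity or surjectivity immediately yields a proper nonzero \emph{submodule} (e.g.\ the pointwise kernels $\ker M_{z\subto y}$ form one), but submodules of persistence modules need not be direct summands, and producing a complement --- equivalently, an idempotent --- is precisely the hard part. Over $\R$ one also has to contend with the four one-sided limits at each point, and the pfd hypothesis enters exactly to control the resulting limit subspaces; this is what the functorial-filtration machinery handles. So your characterization of (a) as ``the main obstacle'' and (b) as comparatively routine has the difficulty allocated backwards: granting (a), step (b) is still essentially the full strength of the theorem, and the one-line appeal to ``produce an idempotent'' papers over it. The high-level outline is coherent and the reference is the right one, but (b) needs the actual functorial-filtration construction (or an equivalent argument), not just the observation that chains of subspaces in finite-dimensional vector spaces stabilize.
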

In general, we call a multiset $\B{}$ of intervals in $\R$ a barcode.

\subsection{Free Modules and Presentations}
\label{Sec:Free_Mods}
Free $\R^n$-persistence modules arise frequently in TDA.  Their definition in fact extends immediately to a definition of free $C$-persistence modules for any small category $C$; for example, see \cite[Section 4]{tchernev2015modules}.  We will only need to consider the case where the indexing category is a poset, so we give the definition just in this case.  

For $X$ a poset and $x\in X$, let $Q^x$ denote the $X$-persistence module given by 
\begin{align*}
Q^x_a &=
\begin{cases}
k &\text{if } x \leq a, \\
0 &\text{otherwise,}
\end{cases}
& Q^x_{a,b}=
\begin{cases}
\mathrm{Id}_k &{\text{if } x\leq a},\\
0 &{\text{otherwise}}.
\end{cases}
\end{align*}

\begin{definition}
We say an $X$-persistence module is \emph{free} if there exists a multiset $\mathcal Y$ of elements in $X$ such that $F\cong \oplus_{x\in \mathcal Y}\, Q^x$.
\end{definition}

For $a\in X$ and $v\in M_a$, we write $\gr(v)=a$. We say that $S\subset \bigcup_{a\in X} M_a$ is a \emph{set of generators for $M$} if for any $v\in \bigcup_{a\in X} M_a$, \[v=\sum_{i=1}^m c_i M_{\gr(v_i)\subto \gr(v)}(v_i)\] for some $v_1,v_2,\ldots, v_m\in S$ and scalars $c_1,\ldots, c_m\in k$.  

\begin{definition}
A \emph{basis} of a free $X$-persistence module $F$ is a minimal set of generators for $F$. 
\end{definition}

\begin{remark}
Alternatively, one can give equivalent definitions of free modules and bases in terms of a universal property \cite{carlsson2009theory,tchernev2015modules}.
\end{remark}

Given a free $X$-persistence module $F$ and a basis $B$ for $F$, write $B^a:=\{b\in B \mid \gr(b)=a\}$. 

\begin{proposition}\label{prop:Basis_Grades}
For any free $X$-persistence module $F$ and $a\in X$, the cardinality of $B^a$ is the same for all bases $B$ of $F$.
\end{proposition}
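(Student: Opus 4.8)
The plan is to reduce the claim to a statement about the vector spaces $F_a$ and the internal maps near $a$, exploiting that a free module $F \cong \oplus_{x \in \mathcal Y} Q^x$ looks, locally near a fixed grade $a$, like a finite-dimensional vector space together with its image under maps coming from strictly smaller grades. Concretely, for a fixed $a \in X$, I would consider the subspace $F_a^{<a} \subseteq F_a$ spanned by all elements of the form $F_{x \subto a}(v)$ with $x < a$ (i.e., $x \leq a$ but $x \neq a$) and $v \in F_x$; equivalently, $F_a^{<a} = \sum_{x<a} \im(F_{x \subto a})$. The key observation is that the quotient $F_a / F_a^{<a}$ has dimension equal to the number of basis elements of grade exactly $a$, for \emph{any} basis, which forces that number to be basis-independent.

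First I would establish, using the decomposition $F \cong \oplus_{x\in\mathcal Y} Q^x$, that $\dim(F_a/F_a^{<a})$ equals $|\{x \in \mathcal Y : x = a\}|$: in the summand $Q^x$, the space $(Q^x)_a$ is $k$ if $x \leq a$ and $0$ otherwise, while $(Q^x)_a^{<a}$ is $k$ if $x < a$ and $0$ if $x = a$ (since if $x = a$ the only way to be in the image from a smaller grade is to be $0$), so the summand contributes to the quotient exactly when $x = a$. Summing over $\mathcal Y$ gives the count. This step handles the ``intrinsic'' side and does not reference any particular basis. Second, given an arbitrary basis $B$ of $F$, I would show that the image in $F_a/F_a^{<a}$ of $B^a$ is a basis of that quotient. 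Spanning follows because $B$ generates $F$: any $v \in F_a$ is a $k$-linear combination of terms $F_{\gr(b_i) \subto a}(b_i)$ with $b_i \in B$, and the terms with $\gr(b_i) < a$ lie in $F_a^{<a}$ while the terms with $\gr(b_i) = a$ (necessarily $F_{\gr(b_i)\subto a}$ being the identity there) are exactly elements of $B^a$ — terms with $\gr(b_i) \not\leq a$ cannot occur. Linear independence modulo $F_a^{<a}$ is where minimality of $B$ is used: if some nontrivial combination of elements of $B^a$ landed in $F_a^{<a}$, one could rewrite one element of $B^a$ in terms of the others together with images from strictly lower grades, contradicting that $B$ is a minimal generating set.

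The main obstacle I anticipate is the linear-independence step, i.e., genuinely extracting a contradiction with minimality. The subtlety is that $F_a^{<a}$ is built from images of \emph{all} of $F_x$ for $x < a$, not just from images of basis elements of grade $< a$; so I must first argue that $F_a^{<a} = \sum_{b \in B,\ \gr(b)<a} \im(F_{\gr(b)\subto a})$, using that $B$ generates and that composition of internal maps is functorial (a generator of $F_x$ pushes forward to $a$ through its own grade). Once $F_a^{<a}$ is expressed via the basis $B$, a relation $\sum_{b \in B^a} c_b\, b \in F_a^{<a}$ becomes, after choosing witnesses, an expression of an element of $B^a$ as a combination of the remaining elements of $B$ with coefficients in $k$ transported by internal maps — precisely the statement that $B \setminus \{b_0\}$ still generates $F$ for some $b_0 \in B^a$ with $c_{b_0} \neq 0$, contradicting minimality. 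I would phrase this carefully at the level of grades to make sure the pushed-forward coefficients are legitimate.

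Finally I would note that steps one and two together give $|B^a| = \dim(F_a/F_a^{<a}) = |\{x\in\mathcal Y : x = a\}|$, the middle quantity being manifestly independent of the choice of basis $B$, which is the assertion of \cref{prop:Basis_Grades}. (One could alternatively bypass the decomposition entirely and argue purely from two arbitrary bases $B, B'$ that each maps to a basis of $F_a/F_a^{<a}$, hence $|B^a| = |B'^a|$; I would likely present it this way to keep the proof self-contained and to avoid invoking existence/uniqueness of $\mathcal Y$, which for general posets $X$ may not be available as cleanly as \cref{Thm:ordinary_Structure} provides in the one-parameter case.)
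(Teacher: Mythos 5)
Your proposal is correct and uses essentially the same argument as the paper: the paper defines $V=\sum_{a'<a}\im(F_{a'\subto a})$ (your $F_a^{<a}$), asserts that $B^a$ descends to a basis of $F_a/V$, and concludes from the well-definedness of vector-space dimension. You have simply filled in the ``easily checked'' step in full, including the correct observation that $V$ is already generated by the images of basis elements of grade strictly below $a$ (needed for the independence argument to connect to minimality of $B$), and your closing paragraph even lands on exactly the paper's streamlined presentation.
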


\begin{proof}
Define $V\subset F_a$ by \[V=\sum_{a'<a} \im(F_{a'\subto a}).\]  It is easily checked that $B^a$ descends to a basis for the vector space $F_a/V$.  Since all bases of a vector space have the same cardinality, the result follows.
\end{proof}

\paragraph{Matrix Representation of Morphisms of Free Modules}
Let $B=(b_1,\ldots,b_{|B|})$ be an ordered basis of a finitely generated free $X$-persistence module $F$, and let $b_i$ denote its $i^{\mathrm{th}}$ element.  For $a\in X$, we can represent a vector $v\in F_a$ with respect to $B$ as a vector $[v]^B\in k^{|B|}$; we take $[v]^B$ to be the unique vector such that $[v]_i^B=0$ if $\gr(b_i)\not\leq a$ and \[v=\sum_{i: \gr(b_i)\leq a} [v]^B_i F_{\gr(b_i)\subto a}(b_i).\] Thus, $[v]^B$ records the field coefficients in the linear combination of $B$ giving $v$.   

Along similar lines, for $B'=(b'_1,\ldots,b'_{|B'|})$ a finite ordered basis of a free persistence module $F'$, we represent a morphism $\gamma\colon F\to F'$ as a matrix $[\gamma]^{B',B}$ with coefficients in the field $k$, with each row and column labeled by an element of $X$, as follows:
\begin{itemize}
\item The $j^{\mathrm{th}}$ column of $[\gamma]^{B',B}$ is $[\gamma(b_j)]^{B'}$.
\item The label of the $j^{\mathrm{th}}$ column is $\gr(b_j)$,
\item The label of the $i^{\mathrm{th}}$ row is $\gr(b'_i)$.
\end{itemize}
Where no confusion is likely, we sometimes write $[\gamma]^{B',B}$ simply as $[\gamma]$.    In the literature on multigraded commutative algebra, $[\gamma]$ is typically called a \emph{monomial matrix} \cite{miller2000alexander,miller2004combinatorial}, though we will not use this term.

\paragraph{Presentations}

A \emph{presentation} of an $X$-persistence module $M$ is a morphism of free $X$-persistence modules $\gamma\colon F\to G$ such that $\coker \gamma \cong M$.  In view of the above, if $F$ and $G$ are finitely generated, then we can represent $\gamma$ with respect to ordered bases of 
$F$ and $G$ as a matrix $[\gamma]$ with coefficients in $k$, together with an $X$-valued label for each row and each column.  We call the labeled matrix $[\gamma]$ a \emph{presentation matrix} for $M$, or simply a presentation (abusing terminology slightly).  It is easily checked that $[\gamma]$ encodes $\gamma$ up to natural isomorphism; that is, given $[\gamma]$, we can construct a morphism of free modules $\gamma'\colon F'\to G'$ such that there exist isomorphisms $\alpha\colon F\to F'$ and $\beta \colon G\to G'$ satisfying $\beta\circ \gamma=\gamma'\circ \alpha$.  If there exists a presentation $\gamma \colon F\to G$ of $M$ with $F$ and $G$ finitely generated, then we say $M$ is \emph{finitely presented}.

\begin{example}
A module $F$ is free if and only if $0\to F$ is a presentation of $F$.
Choosing a basis of $F$, we get a presentation matrix that has no columns and therefore contains no data except a vector of $X$-valued row labels. 
This vector records the grades of the basis elements.
\end{example}

\begin{definition}\label{Def:tame_barcode}
We say that a barcode $\B{}$ is \emph{finitely presented} if $\B{}$ consists of finitely many intervals, each of the form $[a,b)$ with $a<b\in \R\cup\{\infty\}$.  
\end{definition}

\begin{remark}
It is easy to check that if $M$ is a finitely presented $\R$-persistence module, then $\B{}_M$ is finitely presented.
\end{remark}

\subsection{Interleavings}\label{Sec:Interleavings}
Fix $n\geq 1$, and for $\delta\in [0,\infty)$, let $\vec \delta=(\delta,\delta,\ldots,\delta)\in \R^n$.  Define the \emph{$\delta$-interleaving category} $\I^\delta$ to be the thin category with object set $\R^n\times \{0,1\}$ and a morphism $(a,i)\to (b,j)$ if and only if either  
\begin{enumerate}
\item $a+\vec\delta \leq b$, or
\item $i=j$ and $a\leq b$.
\end{enumerate}
For $i\in \{0,1\}$, let ${\mathcal J}^i\colon\R^n\to \I^\delta$ be the functor mapping $a\in \R^n$ to $(a,i)$.  Thus, ${\mathcal J}^i$ is the restriction of $\I^\delta$ to one of the two copies of $\R^n$.

\begin{definition}
For $\delta\in [0,\infty)$, a \emph{$\delta$-interleaving} between $\R^n$-persistence modules $M$ and $N$ is a functor
\[
Z\colon \I^\delta\to \Vect
\]
such that $Z\circ {\mathcal J}^0=M$ and $Z\circ {\mathcal J}^1=N$.
\end{definition}

\begin{remark}
Equivalently, a $\delta$-interleaving can be defined as a certain pair of natural transformations, as for example in \cite{bubenik2015metrics}, and this viewpoint is a useful one.  However, for the purposes of this paper,  the definition we have given here is more convenient.
\end{remark}

We define the interleaving distance $d_{\mcI}$ between functors $M,N\colon \R^n\to \Vect$ by \[d_{\mcI}(M,N)=\inf\, \{\delta\mid \textup{There exists a $\delta$-interleaving between $M$ and $N$}\}.\]

\begin{definition}[Extended Metrics and Pseudometrics]\label{Def: Extended_(Pseudo)Metric}
Given a set $X$ in some Grothendieck universe, an \emph{extended pseudometric on $X$} is a function $d \colon X\times X\to [0,\infty]$ such that for all $x,y,z\in X$
\begin{itemize}
\item $d(x,x)=0$,
\item $d(x,y)=d(y,x)$,
\item $d(x,z)\leq d(x,y)+d(y,z)$.
\end{itemize}
If in addition $d(x,y)>0$ whenever $x \ne y$, then $d$ is called an \emph{extended metric}.  (By convention, ``extended" is understood in this context to mean that $d$ can take value $\infty$.)
\end{definition}
As noted in the introduction, in this paper a \emph{distance} is understood to be an extended pseudometric.

\begin{remark}\label{Rem:Interleaving_Dist_Is_Metric} $d_{\mcI}$ is an extended pseudometric on $n$-parameter persistence modules.  Moreover, it is shown in \cite{lesnick2015theory} that $d_{\mcI}$ descends to an extended metric on isomorphism classes of finitely presented modules.
\end{remark}

\subsection{The Wasserstein Distance}\label{Sec:Wasserstein_Distance}\label{Sec:Wasserstein}
We next define the Wasserstein distance on barcodes.  To keep notation simple, we give the definition only for finitely presented barcodes (\cref{Def:tame_barcode}), as this is the only case we use in this paper, but the definition extends without difficulty to arbitrary barcodes.

A \emph{matching} between sets $S$ and $T$ is a bijection 
$\sigma\colon S'\to T'$, 
where $S'\subset S$ and $T'\subset T$.  Formally, we may regard $\sigma$ as a subset of $S\times T$, where $(s,t)\in \sigma$ if and only if $\sigma(s)=t$.  The definition of a matching extends readily to multisets.

In what follows, we will freely use the standard conventions for arithmetic over the extended reals $\R\cup\{-\infty, \infty\}$.  
In addition, we adopt the convention that $\infty-\infty=0$.  
  
Recall from \cref{Def:tame_barcode} that each interval in a finitely presented barcode is of the form $[a_1,a_2)$ for $a_1<a_2\in \R\cup \{\infty\}$; we will identify this interval with the point $a=(a_1,a_2)\in \R\times (\R\cup \{\infty\})$.  
We let 
\[m(a)=\left(\frac{a_1+a_2}{2},\frac{a_1+a_2}{2}\right)\in (\R\cup\{\infty\})^2.\]  

\begin{definition}[Wasserstein Distance on Barcodes]
Let $\B{}$ and $\C$ be finitely presented barcodes.  Given $p\in [1,\infty)$ and a matching $\sigma\colon\B{} \to \C$, we define 
\[\cost(\sigma,p)=\left(\sum_{(a,b)\in \sigma}\|a-b\|^p_p\ +\ \sum_{a\in \B{}\cup \C \textup{ unmatched by $\sigma$}} \|a-m(a)\|^p_p\right)^{\frac{1}{p}}.\]
Similarly, we define 
\[\cost(\sigma,\infty)=\max\left(\max_{(a,b)\in \sigma}\|a-b\|_\infty,\ \max_{a\in \B{}\cup \C \textup{ unmatched by $\sigma$}} \|a-m(a)\|_\infty\right).\]
For $p\in [1,\infty]$, the \emph{$p$-Wasserstein distance between $\B{}$ and $\C$}, denoted $d_{\W}^p(\B{},\C)$, is defined as 
\[d_{\W}^p(\B{},\C)=\min_{\sigma\colon\B{}\to \C\textup { a matching}} \cost(\sigma,p).\]
\end{definition}

It is easily checked that \[d_{\W}^\infty(\B{},\C)=\lim_{p\to\infty} d_{\W}^p(\B{},\C).\]

\begin{remark}
As every \pfd $\R$-persistence module has a well-defined barcode, we can regard $d_{\W}^p$ as a distance on such modules.
\end{remark}

\begin{remark}\label{Rem:Waserstein_History_Variants}
To the best of our knowledge, this version of the Wasserstein distance first appeared in the TDA literature for arbitrary $p\in [1,\infty]$  in \cite{robinson2013hypothesis}.  However, the $p=1$ case was considered earlier \cite{carlsson2004persistence}, as was the $p=\infty$ case (i.e., the bottleneck distance)  \cite{d2003optimal,brucale2002size}.  As mentioned in the introduction, Cohen-Steiner et al. \cite{cohen2010lipschitz} considered the variant of $d_{\W}^p$ where in the definition of $\cost(\sigma,p)$, the $\ell^p$-norm on $\R^2$ is replaced by the $\ell^\infty$-norm.  Similarly, Bubenik et al. \cite{bubenik2018wasserstein} replaced the $\ell^p$-norm on $\R^2$ with the $\ell^1$-norm.
\end{remark}

We are now ready for a formal statement of the isometry theorem:
\begin{theorem}[Isometry Theorem \cite{lesnick2015theory,chazal2012structure,bubenik2014categorification}]\label{Thm:Isometry}
For all \pfd $\R$-persistence modules $M$ and $N$, we have \[d_{\mcI}(M,N)=d_{\W}^\infty(M,N).\] 
\end{theorem}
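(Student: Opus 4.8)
The plan is to prove the two inequalities $d_{\mcI}(M,N)\leq d_{\W}^\infty(M,N)$ and $d_{\mcI}(M,N)\geq d_{\W}^\infty(M,N)$ separately, and in fact to reduce both to the behavior of interval modules via the decomposition of \cref{Thm:ordinary_Structure}. Since both $M$ and $N$ are \pfd, we may write $M\cong\bigoplus_{I\in\B{}_M}k^I$ and $N\cong\bigoplus_{J\in\B{}_N}k^J$. For the \emph{easy direction}, $d_{\mcI}\leq d_{\W}^\infty$, I would start from an optimal matching $\sigma\colon\B{}_M\to\B{}_N$ realizing $d_{\W}^\infty(M,N)=\delta$. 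The key observation is that interleavings are additive over direct sums: if $Z_I$ is a $\delta$-interleaving between $k^I$ and $k^{\sigma(I)}$, and $Z_I$ is a $\delta$-interleaving between $k^I$ (resp.\ $k^J$) and $0$ for the unmatched intervals, then $\bigoplus Z_I$ is a $\delta$-interleaving between $M$ and $N$. So it suffices to check two things at the level of single intervals: (a) if $\|a-b\|_\infty\leq\delta$ for intervals identified with points $a=(a_1,a_2)$, $b=(b_1,b_2)$ as in \cref{Sec:Wasserstein}, then $k^{[a_1,a_2)}$ and $k^{[b_1,b_2)}$ are $\delta$-interleaved; and (b) if $\|a-m(a)\|_\infty = (a_2-a_1)/2 \leq \delta$, i.e.\ $a_2-a_1\leq 2\delta$, then $k^{[a_1,a_2)}$ is $\delta$-interleaved with $0$. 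Both are elementary: for (a) one builds the interleaving functor on $\I^\delta$ by sending $(x,0)\mapsto k^{[a_1,a_2)}_x$, $(x,1)\mapsto k^{[b_1,b_2)}_x$, with identity maps wherever both are $k$; the morphism condition in $\I^\delta$ (case $x+\vec\delta\leq y$) holds precisely because the shifted interval $[a_1-\delta,a_2-\delta)$ contains no point outside $[b_1,b_2)$ that could cause a commutativity failure, and symmetrically. For (b) the interval collapses to $0$ after a shift by $\delta$ on at least one side, again directly from $a_2-a_1\leq 2\delta$.

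For the \emph{hard direction}, $d_{\mcI}\geq d_{\W}^\infty$ — this is the algebraic stability theorem — I would suppose $Z\colon\I^\delta\to\Vect$ is a $\delta$-interleaving between $M$ and $N$ and produce a matching $\sigma\colon\B{}_M\to\B{}_N$ with $\cost(\sigma,\infty)\leq\delta$. The standard route, and the one I expect to be the main obstacle, is to extract from the interleaving a ``$\delta$-matching'' between the barcodes: intervals $I\in\B{}_M$ and $J\in\B{}_N$ get matched when they are ``$\delta$-close,'' and each unmatched interval must have length $\leq 2\delta$. One clean way to organize this is via the structure theory of the interleaving module $Z$ itself, or equivalently via a rank/box-counting argument: for a box $R=[x,y]\subset\R$, the number of intervals of $\B{}_M$ containing $[x,y]$ equals $\rank(M_{x\subto y})$, and the interleaving gives inequalities relating $\rank(M_{x\subto y})$ to $\rank(N_{(x-\delta)\subto(y+\delta)})$ and vice versa. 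Feeding these into a Hall's-theorem / induction-on-interval-length argument (as in Bjerkevik's proof, or the block-decomposition arguments of Bauer–Lesnick) yields the desired matching. The subtlety lies entirely in the bookkeeping that converts the rank inequalities into an actual bijection respecting the $\ell^\infty$-cost bound; I would invoke the block/interleaving-module decomposition to make this transparent, matching each block summand of $Z$ that ``connects'' $M$ to $N$ to a pair of intervals, and each block supported only over one copy of $\R^n$ to an unmatched interval whose length is forced to be $\leq 2\delta$ by the morphism structure of $\I^\delta$.

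Assembling the two inequalities gives the claimed equality. I would remark that this recovers \cref{Thm:Isometry} via \cref{Thm:Main_Thm_Intro}\,(i) and the $p=\infty$ case of (iv), consistent with the paper's stated strategy; the genuinely new content is the hard direction, and I would be careful to note where \pfd-ness (as opposed to finite presentability) is actually used, namely only to guarantee the existence of the barcodes $\B{}_M$, $\B{}_N$ and to keep all ranks finite so the counting arguments go through.
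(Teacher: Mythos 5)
Your proposal is mathematically sound but takes a genuinely different route from the one the paper develops. You give the classical proof: decompose $M$ and $N$ into interval modules via \cref{Thm:ordinary_Structure}, build a $\delta$-interleaving interval-by-interval from an optimal matching for the easy direction, and invoke the algebraic stability theorem (rank inequalities feeding into a Hall's-theorem or block-decomposition argument in the style of Bjerkevik or Bauer--Lesnick) for the hard direction. The paper, by contrast, cites \cref{Thm:Isometry} from the literature for the stated p.f.d.\ generality, and its own novel re-proof (valid only for finitely presented modules) factors through the presentation distance: \cref{Thm:Main_Thm_Intro}\,(i), $d_{\mcI}^\infty=d_{\mcI}$, is proved via the push maps of \cref{Prop:Pushes} applied to presentations of the interleaving module $Z\colon\I^\delta\to\Vect$ together with the finiteness lemma \cref{Finitely_Presented_Interleaving}; and \cref{Thm:Main_Thm_Intro}\,(iv), $d_{\mcI}^p=d_{\W}^p$, is proved by building presentations from matchings for one inequality and, for the other, interpolating linearly between the labels of two presentations, reducing to graded Smith normal form at each compatibility point (\cref{Prop:SNF}, \cref{Lem:Refinements_And_Admissibility}), and reading off a matching. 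Your approach never decomposes along this axis and instead leans directly on the interval-module structure theorem and the known algebraic stability theorem. Each buys something: yours works at the full p.f.d.\ (and, via \cite{chazal2012structure}, $q$-tame) generality in which \cref{Thm:Isometry} is stated, whereas the paper's presentation-based proof only covers the finitely presented case; on the other hand, the paper's route proves the $p=\infty$ case simultaneously with all $p\in[1,\infty]$ and is precisely the scaffolding it needs for $d_{\mcI}^p$ in Sections 3--5, which your argument does not touch.

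One small inaccuracy in your closing remarks: you write that your proof ``recovers \cref{Thm:Isometry} via \cref{Thm:Main_Thm_Intro}\,(i) and the $p=\infty$ case of (iv).'' It does not; your two inequalities are established by the classical direct-sum and rank/matching arguments and make no reference to $d_{\mcI}^p$ or to presentations. The route through \cref{Thm:Main_Thm_Intro} is the paper's alternative proof, not a consequence of yours, and it is also not ``new content'' for the algebraic stability direction per se --- that direction is the already-known theorem of \cite{chazal2009proximity,chazal2012structure,bauer2015induced,bjerkevik2016stability}; what is new in the paper is the unified $\ell^p$-presentation framework that recovers it as a byproduct.
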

In fact, it was shown by Chazal et al. that \cref{Thm:Isometry} holds for a more general class of modules, the \emph{$q$-tame} modules \cite{chazal2012structure,bauer2015induced}, though defining barcodes in this generality requires some care \cite{chazal2016observable}.

\subsection{The Matching Distance}\label{Sec:Matching_Distance}

Let $M$ be an $\R^n$-persistence module and $l\colon\R\to \R^n$ be a parameterized line of the form $l(t) = t\vec{v}+\vec w$, where $0\ne \vec v\in [0,\infty)^n$. We say $l$ is \emph{admissible} if the smallest coordinate of $\vec v$ is 1.
Let $M^l$ denote the $\R$-persistence module $M\circ l$, i.e., $M_a^l= M_{l(a)}$ and $M_{a \subto b}^l = M_{l(a) \subto l(b)}$.   

\begin{definition} The \emph{matching distance} between \pfd $\R^n$-persistence modules $M$ and $N$ is defined by 
\begin{align*}
d_\match(M,N) = \sup_{l}\, d_{\W}^\infty(M^l,N^l),
\end{align*}
where $l\colon\R\to \R^n$ ranges over all admissible lines. 
\end{definition}

\begin{proposition}[\cite{landi2018rank,cerri2013betti}]\label{Prop:Matching_Distance_Lower_Bound}
For all \pfd $\R^n$-persistence modules $M$ and $N$, $d_\match(M,N)\leq d_{\mcI}(M,N)$.
\end{proposition}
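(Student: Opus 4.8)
The plan is to prove the inequality $d_\match(M,N)\le d_\mcI(M,N)$ by showing that interleavings restrict to interleavings along admissible lines, and then invoking the algebraic stability theorem (the inequality half of the isometry theorem, \cref{Thm:Isometry}) in the $1$-parameter case. First I would reduce to the statement that for every admissible line $l\colon\R\to\R^n$, $l(t)=t\vec v+\vec w$, and every $\delta\in[0,\infty)$, a $\delta$-interleaving between $M$ and $N$ induces an $\epsilon$-interleaving between the restricted $\R$-persistence modules $M^l$ and $N^l$ for a suitable $\epsilon=\epsilon(\delta,l)$. Since $d_\mcI(M^l,N^l)=d_\W^\infty(M^l,N^l)$ by \cref{Thm:Isometry}, taking the infimum over $\delta$ and then the supremum over $l$ would give $d_\match(M,N)=\sup_l d_\W^\infty(M^l,N^l)\le d_\mcI(M,N)$, provided $\epsilon(\delta,l)\le\delta$ for admissible $l$.

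The key computation is the value of $\epsilon$. A $\delta$-interleaving supplies, for each $a\in\R^n$, linear maps in both directions between the values of $M$ and $N$ that are compatible with the shift by $\vec\delta=(\delta,\dots,\delta)$. Restricting along $l$, at parameter $t\in\R$ we are comparing $M_{l(t)}$ with $N_{l(t)}$; the interleaving maps are available whenever $l(t)+\vec\delta\le l(t')$, i.e. whenever $t\vec v+\vec w+\vec\delta\le t'\vec v+\vec w$, equivalently $\vec\delta\le(t'-t)\vec v$. Writing $v_{\min}$ for the smallest coordinate of $\vec v$, this holds as soon as $t'-t\ge\delta/v_{\min}$. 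Hence the restriction is a $(\delta/v_{\min})$-interleaving of $\R$-persistence modules. For $l$ admissible we have $v_{\min}=1$, so $\epsilon(\delta,l)=\delta$, which is exactly what is needed. (This is also the point where admissibility of $l$ is used; for a general line one would only get the weaker bound $\delta/v_{\min}$, which can be larger than $\delta$.)

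The remaining steps are routine bookkeeping: one checks that the functor $Z\colon\I^\delta\to\Vect$ witnessing the $\delta$-interleaving of $M$ and $N$ composes with an appropriate functor $\R\times\{0,1\}$-indexed "line through $\I^\delta$" to yield a $(\delta/v_{\min})$-interleaving, i.e. a functor on $\mcI^{\delta/v_{\min}}$ restricting to $M^l$ and $N^l$ on the two copies of $\R$; the morphism conditions in $\I^\delta$ translate correctly under the substitution $a\mapsto l(a)$ because $l$ is order-preserving and affine. I expect the main (and only real) obstacle to be stating the line-restriction of an interleaving cleanly in the functor-on-$\I^\delta$ formulation adopted in \cref{Sec:Interleavings}, rather than the two-natural-transformations formulation; once the indexing categories are set up, the inequality falls out immediately from \cref{Thm:Isometry}. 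Note that this argument needs only $q$-tameness of $M^l$ and $N^l$ for the isometry theorem to apply, which holds since $M$ and $N$ are \pfd and $l$ is monotone, so the full generality of \cref{Prop:Matching_Distance_Lower_Bound} is covered.
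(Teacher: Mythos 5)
Your proposal is correct and takes essentially the same approach as the paper: restrict a $\delta$-interleaving along an admissible line to obtain a $\delta$-interleaving of $M^l$ and $N^l$, then invoke algebraic stability. The paper states the restriction step as "easily checked," and your $v_{\min}$ computation is exactly the content of that check, including the correct observation that admissibility ($v_{\min}=1$) is what makes the interleaving parameter come out as $\delta$ rather than $\delta/v_{\min}$.
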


\begin{proof}
It is easily checked that a $\delta$-interleaving between $M$ and $N$ induces a $\delta$-interleaving between $M^l$ and $N^l$, for any admissible line $l$.  The result now follows from the algebraic stability theorem.
\end{proof}

\subsection{Left Kan Extensions along Grid Functions}
The material of this subsection will be used in the proofs of \cref{Thm:Main_Thm_Intro}\,(i) and \cref{Thm:Universality}\,(i).

\begin{definition}[\cite{lesnick2015interactive,botnan2018algebraic}]
Let $T=\Z$ or $T=\R$.  A \emph{grid function} is a function $\G\colon T^n\to \R^n$ of the form \[\G(a_1,\ldots,a_n)=(\G_1(a_1),\ldots,\G_n(a_n)),\] where each $\G_i\colon T\to \R$ is a functor (i.e. order-preserving function) such that  \[\lim_{a\to -\infty} \G_i(a)=-\infty\quad \textup{ and }\quad\lim_{a\to \infty} \G_i(a)=\infty.\]
If $T=\R$, then we also require that each $\G_i$ is left continuous.
\end{definition}
Note that in the above definition, we do not assume the $\G_i$ to be injective.  For $\G$ a grid function, define $\G^{-1}\colon\R^n\to T^n$ by \[\G^{-1}(a_1,\dots,a_n)=(\G^{-1}_1(a_1),\cdots,\G^{-1}_n(a_n)),\]
where \[\G^{-1}_i(a)=\max\,\{t \in T\mid \G_i(t)\leq a\}.\]   
The left continuity assumption in the definition of a grid function ensures that these maxima exist.

Left Kan extensions along grid functions have an especially simple description.  It will be convenient to take this description as a definition:

\begin{definition}[Left Kan Extension along a Grid Function]\label{Def:Concrete_LKan_Description}
For any grid function $\G\colon T^{n}\to \R^n$ and category $\C$, define a functor $\Lan_\G\colon\C^{T^n}\to \C^{\R^n}$ by 
\begin{align*}
\Lan_\G(F)_{a}&=F_{\G^{-1}(a)},\\
 \Lan_\G(F)_{a\subto b}&=F_{\G^{-1}(a)\subto \G^{-1}(b)},
 \end{align*}
where $a\leq b\in \R^n$. Similarly, for any natural transformation $\gamma$ of $T^n$-persistence modules, define $\Lan_\G(\gamma)$ by
\[\Lan_\G(\gamma)_{a}=\gamma_{\G^{-1}(a)}.\] 
It is readily checked that $\Lan_\G$ is indeed a functor.  This functor (or any functor naturally isomorphic to it) is called the \emph{left Kan extension along $\G$}.  
\end{definition}

\begin{remark}
It is straightforward to check that the functor $\Lan_\G$ of \cref{Def:Concrete_LKan_Description} is indeed the standard left Kan extension functor along $\G$, as defined, e.g., in \cite[Proposition 6.1.5]{riehl2017category}.  However, for our purposes, it will be sufficient to work directly with \cref{Def:Concrete_LKan_Description}, avoiding appeals to the standard definition.
\end{remark}

We now fix a grid function $\G$.  

\begin{lemma}\label{Lem:Exact_Extension}
If $\C=\Vect$, then $\Lan_\G$ is exact, i.e., given a short exact sequence of $T^n$-persistence modules \[0\to M\xrightarrow{\gamma} N\xrightarrow{\kappa} Q\to 0,\]
applying $\Lan_\G$ to the sequence yields an exact sequence:
\[0\to \Lan_{\G}(M)\xrightarrow{\Lan_{\G}(\gamma)} \Lan_{\G}(N)\xrightarrow{\Lan_{\G}(\kappa)} \Lan_{\G}(Q)\to 0.\]
\end{lemma}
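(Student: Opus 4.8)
The plan is to exploit the extremely concrete, objectwise description of $\Lan_\G$ given in \cref{Def:Concrete_LKan_Description}: at each $a\in\R^n$ the functor simply reads off the value at the single index $\G^{-1}(a)\in T^n$, and it does the same thing on morphisms and natural transformations. So the whole statement should reduce to a pointwise check. First I would recall that exactness of a sequence of persistence modules is tested objectwise — a sequence $0\to M\to N\to Q\to 0$ in $\Vect^{T^n}$ is exact if and only if $0\to M_c\to N_c\to Q_c\to 0$ is an exact sequence of $k$-vector spaces for every $c\in T^n$. This is immediate from the fact that kernels, images and cokernels in $\Vect^{T^n}$ are computed objectwise, as noted in the Preliminaries.

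Given that, the argument is short. Fix $a\in\R^n$ and set $c=\G^{-1}(a)\in T^n$. By \cref{Def:Concrete_LKan_Description},
\[
\Lan_\G(M)_a=M_c,\qquad \Lan_\G(N)_a=N_c,\qquad \Lan_\G(Q)_a=Q_c,
\]
and the maps $\Lan_\G(\gamma)_a$, $\Lan_\G(\kappa)_a$ are literally $\gamma_c$, $\kappa_c$. Since $0\to M\xrightarrow{\gamma}N\xrightarrow{\kappa}Q\to 0$ is exact in $\Vect^{T^n}$, the sequence $0\to M_c\xrightarrow{\gamma_c}N_c\xrightarrow{\kappa_c}Q_c\to 0$ is exact in $\Vect$. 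Hence $0\to \Lan_\G(M)_a\to \Lan_\G(N)_a\to \Lan_\G(Q)_a\to 0$ is exact for every $a\in\R^n$, and therefore the sequence of $\R^n$-persistence modules obtained by applying $\Lan_\G$ is exact. One should also note that $\Lan_\G$ is additive/functorial (so it does send a chain complex to a chain complex and $\gamma,\kappa$ to composable morphisms with $\Lan_\G(\kappa)\circ\Lan_\G(\gamma)=\Lan_\G(\kappa\circ\gamma)=0$), but this is already recorded in \cref{Def:Concrete_LKan_Description} where it is observed that $\Lan_\G$ is a functor.

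There is essentially no obstacle here: the only thing to be slightly careful about is that $\G^{-1}$ is genuinely well defined as a function $\R^n\to T^n$, which is exactly what the left-continuity hypothesis in the definition of a grid function guarantees (the relevant maxima exist), and that the value $\G^{-1}(a)$ does not depend on any choices. Once that is in hand, the proof is a one-line invocation of objectwise exactness. If anything deserves a sentence of justification, it is the claim that exactness in $\Vect^{T^n}$ is detected objectwise; I would simply point to the objectwise definitions of kernels and images given in \cref{Sec:Free_Mods}'s preceding subsection on persistence modules.
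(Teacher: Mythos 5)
Your proof is correct and takes essentially the same approach as the paper: the paper's one-line argument is that exactness is an objectwise property and is immediate from \cref{Def:Concrete_LKan_Description}, which is exactly the pointwise check you spell out.
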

\begin{proof}
Exactness of a sequence of persistence modules is an objectwise property, so this immediate from \cref{Def:Concrete_LKan_Description}. 
\end{proof}

\begin{notation}\label{Notation:Kan_Extended_Basis}
For $B$ a basis of a free $T^n$-persistence module $F$, let 
\[\Lan_\G(B)=\{b\in \Lan_\G(F)_{\G(\gr(b))}\mid b\in B\}.\]
To see that the above definition in fact makes sense, note that if $b\in B$, then by \cref{Def:Concrete_LKan_Description}, $b\in F_{\gr(b)}=\Lan_\G(F)_{\G(\gr(b))}$.
\end{notation}

\begin{proposition}\label{Prop:Free_Modules_and_Extension}
For any free $T^n$-persistence module $F$, 
\begin{itemize}
\item[(i)] $\Lan_\G(F)$ is free.
\item[(ii)] $\Lan_\G(B)$ is a basis of $\Lan_\G(F)$.
\item[(iii)] Given a morphism $\gamma\colon F\to G$ of free $T^n$-persistence modules and ordered bases $B$, $B'$ of $F$ and $G$,  the unlabeled matrices underlying $[\gamma]^{B',B}$ and $[\Lan_{\G}(\gamma)]^{\Lan_{\G}(B'),\Lan_{\G}(B)}$ are the same.
\end{itemize}
\end{proposition}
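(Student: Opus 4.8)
The plan is to reduce everything to the behaviour of $\Lan_\G$ on the rank-one free modules $Q^x$, together with the fact that (by the objectwise formula in \cref{Def:Concrete_LKan_Description}) $\Lan_\G$ is an additive functor commuting with arbitrary direct sums and preserving isomorphisms. The one elementary input is the ``adjunction'' $x\leq\G^{-1}(a)\iff\G(x)\leq a$ for $x\in T^n$, $a\in\R^n$: the set $\{t\in T\mid\G_i(t)\leq a_i\}$ is a down-set which contains its own supremum $\G_i^{-1}(a_i)$, so lying in it is the same as lying at or below $\G_i^{-1}(a_i)$. Feeding this into $\Lan_\G(Q^x)_a=Q^x_{\G^{-1}(a)}$ (and likewise for internal maps, using that $\G^{-1}$ is order-preserving) gives $\Lan_\G(Q^x)=Q^{\G(x)}$ on the nose. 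Part (i) is then immediate: $F\cong\bigoplus_{x\in\mathcal Y}Q^x$ yields $\Lan_\G(F)\cong\bigoplus_{x\in\mathcal Y}Q^{\G(x)}$.

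For (ii) and (iii) I would encode a basis by a map of free modules. For a basis $B$ of a free module $F$, let $\phi_B\colon\bigoplus_{b\in B}Q^{\gr(b)}\to F$ be the canonical morphism sending the generator of the $b$-th summand to $b$; then $B$ is a basis precisely when $\phi_B$ is an isomorphism (a standard fact, which can be deduced from \cref{prop:Basis_Grades}: $\phi_B$ is always epi, and \cref{prop:Basis_Grades} pins down the degreewise generator counts, so a minimal generating set cannot have nonzero kernel). Applying $\Lan_\G$, and using $\Lan_\G(Q^{\gr(b)})=Q^{\G(\gr(b))}$, compatibility with direct sums, and the formula for $\Lan_\G$ on natural transformations, one checks directly that $\Lan_\G(\phi_B)=\phi_{\Lan_\G(B)}$, where $\Lan_\G(B)$ is the extended basis of \cref{Notation:Kan_Extended_Basis}. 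Since $\Lan_\G$ preserves isomorphisms, $\phi_{\Lan_\G(B)}$ is an isomorphism, so $\Lan_\G(B)$ is a basis of $\Lan_\G(F)$; this is (ii).

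Part (iii) is then bookkeeping with the same identifications. Fixing ordered bases $B$, $B'$, the matrix $[\gamma]^{B',B}$ is, by definition, the array of scalars $(c_{ij})$ for which, after transporting along $\phi_B$ and $\phi_{B'}$, the $(i,j)$-component of $\gamma$ is the morphism $Q^{\gr(b_j)}\to Q^{\gr(b'_i)}$ equal to multiplication by $c_{ij}$ (with $c_{ij}=0$ unless $\gr(b'_i)\leq\gr(b_j)$). Applying $\Lan_\G$ and invoking $\Lan_\G(\phi_B)=\phi_{\Lan_\G(B)}$ and $\Lan_\G(\phi_{B'})=\phi_{\Lan_\G(B')}$, it remains only to observe that $\Lan_\G$ sends the ``multiplication by $c$'' morphism $Q^y\to Q^z$ to the ``multiplication by $c$'' morphism $Q^{\G(y)}\to Q^{\G(z)}$ --- again immediate from \cref{Def:Concrete_LKan_Description} and the adjunction. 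Hence $[\Lan_\G(\gamma)]^{\Lan_\G(B'),\Lan_\G(B)}$ has underlying array $(c_{ij})$, which is the claim.

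The step I expect to demand the most care is not any individual computation but the grade bookkeeping in (ii)--(iii): since $\G$ need not be injective, distinct grades in $T^n$ may collapse to a common grade in $\R^n$, so one must track carefully in which degree each extended basis element sits --- namely $\G(\gr(b))=\G(\G^{-1}(\G(\gr(b))))$ --- and which internal maps are being composed. Routing everything through the canonical maps $\phi_B$ and the adjunction $x\leq\G^{-1}(a)\iff\G(x)\leq a$ keeps this under control, after which each verification is a one-line appeal to the objectwise description of $\Lan_\G$.
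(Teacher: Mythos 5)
Your proposal is correct and follows the same strategy as the paper. For part (i) you use exactly the paper's two ingredients — that $\Lan_\G$ commutes with direct sums because it is defined objectwise, and that $\Lan_\G(Q^x)=Q^{\G(x)}$ (your verification via the Galois-connection $x\leq\G^{-1}(a)\iff\G(x)\leq a$ is the computation the paper calls an ``easy consequence'' of \cref{Def:Concrete_LKan_Description}). For (ii)--(iii) the paper simply says they follow easily from \cref{Def:Concrete_LKan_Description} and gives no details; your packaging via the canonical map $\phi_B\colon\bigoplus_{b\in B}Q^{\gr(b)}\to F$, the identity $\Lan_\G(\phi_B)=\phi_{\Lan_\G(B)}$, and the fact that $\Lan_\G$ preserves isomorphisms and the scalar-multiplication morphisms $Q^y\to Q^z$, is a clean and legitimate way to supply those details using the same underlying facts. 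Two small points worth being explicit about if you write this up: (a) the characterization ``$B$ a basis $\iff\phi_B$ an isomorphism'' is stated via a dimension-count argument and so tacitly uses pointwise finite-dimensionality; this is harmless here since the paper only ever invokes the proposition for finitely generated modules. (b) When $\G$ is not injective you correctly note that one must read ``$b\in\Lan_\G(F)_{\G(\gr(b))}$'' as the image of $b$ under $F_{\gr(b)\subto\G^{-1}(\G(\gr(b)))}$; make sure that when you compare the $(i,j)$ entries in (iii) you handle the case $\gr(b'_i)\not\leq\gr(b_j)$ but $\G(\gr(b'_i))\leq\G(\gr(b_j))$ by appealing to uniqueness of coordinates with respect to the basis $\Lan_\G(B')$ (which is part (ii)), so that the vanishing of $c_{ij}$ is preserved rather than merely asserted.
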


\begin{proof}
To prove (i), first note that $\Lan_\G$ preserves direct sums, because direct sums are defined objectwise.  (More generally, left Kan extensions preserve coproducts, because they are left adjoints \cite[Proposition 6.1.5]{riehl2017category}, and left adjoints preserve colimits \cite[Theorem 4.5.3]{riehl2017category}.)  Moreover, it is an easy consequence of \cref{Def:Concrete_LKan_Description} that $\Lan_\G(Q^x)=Q^{\G(x)}$ for all $x\in T^n$.  Item (i) now follows.  Items (ii) and (iii) follow easily from \cref{Def:Concrete_LKan_Description}.
\end{proof}

\begin{lemma}\label{Lem:Fin_Gen_LKE}
Any morphism of $\gamma:M\to N$ of finitely presented $\R^n$-persistence modules is the left Kan extension of a morphism $\gamma':M'\to N'$ of finitely generated $\Z^n$-persistence modules along a grid function $\G\colon \Z^n \to \R^n$. 
\end{lemma}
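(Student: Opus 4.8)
The plan is to take a finite presentation $\gamma\colon F\to G$ of $N$ together with the morphism $\gamma$, lift everything to $\Z^n$ via a suitably chosen grid function, and use \cref{Lem:Exact_Extension} and \cref{Prop:Free_Modules_and_Extension} to check that left Kan extension recovers the original data. Concretely, pick finite presentations $\delta_M\colon F_M\to G_M$ of $M$ and $\delta_N\colon F_N\to G_N$ of $N$. A morphism $\gamma\colon M\to N$ lifts to a morphism of presentations, i.e.\ a commuting square of morphisms of free modules $\tilde\gamma_F\colon F_M\to F_N$, $\tilde\gamma_G\colon G_M\to G_N$ compatible with $\delta_M,\delta_N$ and inducing $\gamma$ on cokernels; such a lift exists because the $F$'s are free (projective) and the cokernel maps are epic. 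So altogether we have a finite diagram of finitely generated free $\R^n$-persistence modules and maps between them, presenting $\gamma$.

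Next I would choose the grid function. Each free module in the diagram has a finite basis, and each basis element has a grade in $\R^n$; let $S_i\subset\R$ be the (finite) set of $i$-th coordinates appearing among all these grades, for $i=1,\dots,n$. Define $\G_i\colon\Z\to\R$ to be any order-preserving, left-continuous step function whose image contains $S_i$ and with $\G_i(a)\to\pm\infty$ as $a\to\pm\infty$ — e.g.\ enumerate $S_i=\{s_1<\dots<s_k\}$ and send the integer interval around $j$ to $s_j$, extending by $\pm\infty$-tending steps outside. Then $\G=(\G_1,\dots,\G_n)$ is a grid function, and crucially $\G(\G^{-1}(s))=s$ for every grade $s$ occurring in our diagram, because $s$ lies in the image of $\G$ coordinatewise. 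Now define $M',N'$ and $\gamma'$ over $\Z^n$ by replacing each basis element of grade $s$ (in $\R^n$) with a basis element of grade $\G^{-1}(s)$ (in $\Z^n$), keeping the underlying matrices of all the structure maps verbatim; this produces finitely generated $\Z^n$-modules $M' = \coker(\delta_M')$, $N'=\coker(\delta_N')$ and a morphism $\gamma'\colon M'\to N'$.

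The verification that $\Lan_\G$ applied to this $\Z^n$-data returns the original $\R^n$-data is then the routine core. By \cref{Prop:Free_Modules_and_Extension}(i),(ii), $\Lan_\G$ sends each chosen free $\Z^n$-module back to the corresponding free $\R^n$-module, and by \cref{Notation:Kan_Extended_Basis} it sends the shifted basis element of grade $\G^{-1}(s)$ to one of grade $\G(\G^{-1}(s))=s$ — which is where the choice of $\G$ containing all relevant grades is used. By \cref{Prop:Free_Modules_and_Extension}(iii) the underlying matrices of $\Lan_\G(\delta_M')$, $\Lan_\G(\delta_N')$, $\Lan_\G(\tilde\gamma_F')$, $\Lan_\G(\tilde\gamma_G')$ agree with the original ones, so $\Lan_\G$ carries the whole presenting diagram back (up to the natural isomorphisms identifying $\Lan_\G(Q^{\G^{-1}(s)})$ with $Q^s$). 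Finally, $\Lan_\G$ is exact (\cref{Lem:Exact_Extension}), hence preserves cokernels, so $\Lan_\G(M')\cong M$, $\Lan_\G(N')\cong N$, and $\Lan_\G(\gamma')$ is identified with $\gamma$ under these isomorphisms. The main obstacle is bookkeeping rather than mathematics: one must be careful that a single grid function simultaneously works for all the grades appearing in $F_M,G_M,F_N,G_N$ (not just one module), and that the identifications $\Lan_\G(Q^{\G^{-1}(s)})\cong Q^s$ are chosen coherently across the diagram so that the reconstructed square genuinely equals the original one — but since the family of modules and grades is finite, choosing $\G$ with all of $S_1,\dots,S_n$ in its image handles this uniformly.
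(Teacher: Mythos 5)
Your proof is correct, but it takes a genuinely different (and more laborious) route than the paper's. The paper simply sets $M'=M\circ\G$, $N'=N\circ\G$, $\gamma'=\gamma\circ\G$ for an injective grid function $\G$ whose image contains all the presentation labels; the natural transformation $\gamma'$ with components $\gamma'_t=\gamma_{\G(t)}$ is then automatic, and no work at the level of presentations is required to produce the $\Z^n$-data (presentations enter only to choose $\G$). You instead construct $M',N',\gamma'$ by lifting $\gamma$ to a commuting square on free modules (using projectivity of the $F$'s and $G$'s), relabeling all basis grades $s$ to $\G^{-1}(s)$, and taking cokernels. Note that your $M'$ is not literally $M\circ\G$ when $\G$ fails to be injective — e.g.\ if $\G(0)=\G(1)=0$, then $M\circ\G$ for $M=Q^0$ is $Q^0$ over $\Z$, while your relabeling produces $Q^{\G^{-1}(0)}=Q^1$ — but both have the same left Kan extension, and your construction works without the injectivity hypothesis the paper imposes. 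The trade-off is that you import a nontrivial homological-algebra step (lifting a morphism to the level of presentations) which the paper's definition $\gamma'=\gamma\circ\G$ sidesteps entirely; in exchange, your version makes the "easily checked" claim of the paper fully explicit via \cref{Lem:Exact_Extension} and \cref{Prop:Free_Modules_and_Extension}, and the coherence concern you flag is indeed automatic because $\Lan_\G(Q^y)=Q^{\G(y)}$ holds on the nose under \cref{Def:Concrete_LKan_Description}. (One small slip: in your first paragraph you briefly overload the symbol $\gamma$ for both the given morphism and a presentation of $N$.)
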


\begin{proof}
Let $P_M$ and $P_N$ be presentation matrices for $M$ and $N$, and let $\G$ be any injective grid function such that $\im \G$ contains all row and column labels of both $P_M$ and $P_N$.  Let 
 \[M'=M\circ \G,\quad N'=N\circ \G,\quad\textup{and}\quad\gamma'=\gamma\circ \G.\]  It is easily checked that $M'$ and $N'$ are finitely generated, and that $\gamma$ is naturally isomorphic to $\Lan_{\G}(\gamma')$.
\end{proof}

\section{The $p$-Presentation Distance}\label{Sec:PresWDist}
\subsection{Definition and Basic Properties}
If $P$ is an $r\times c$ presentation matrix of an $\R^n$-persistence module, then we may regard the set of $\R^n$-labels for the rows and columns as a function \[\labels(P)\colon\{1,2,\ldots,r+c\}\to \R^n\] by taking all rows to be ordered before all columns.  Equivalently, we may view $\labels(P)$ as a vector in $(\R^{n})^{(r+c)}$, i.e., a vector of length $(r+c)$ where each entry lies in $\R^n$.

Let $P_{M,N}$ denote the set of all pairs $(P_M,P_N)$ of finite presentations of $M$ and $N$, respectively, with the same underlying matrix.  Recalling \cref{Not:function_p_norm}, for $p\in [1,\infty]$ and  $(P_M,P_N)\in P_{M,N}$, we define \[d^p(P_M,P_N)=\|\labels(P_M)-\labels(P_N)\|_p.\]
For $M$ and $N$ finitely presented, we define \[\hat d_{\mcI}^p(M,N)=\inf_{(P_M,P_N)\in P_{M,N}} d^p(P_M,P_N).\]

By \cref{Thm:Main_Thm_Intro}\,(i) and \cref{Rem:Interleaving_Dist_Is_Metric},  $d_{\mcI}^\infty=d_{\mcI}$ descends to a metric on isomorphism classes of persistence modules.  However, the following example shows that for $p\in [1,\infty)$, $\hat d^p_\mcI$ does not satisfy the triangle inequality.

\begin{example}
\label{Ex:Triangle_eq_fails}
Fix $p\in [1,\infty)$, and let $M$ be the $\R^2$-persistence module given by
$$
M_a=
\begin{cases}
k &\text {if }a\geq (0,-1)\text{ or } a\geq (-1,0),\\
0 &\text{ otherwise},
\end{cases}
$$
with $M_{a\subto b}$ the identity whenever $M_a=k$. Recall from \cref{Sec:Free_Mods} that for $x\in \R^2$, $Q^x$ denotes the free module with one generator at $x$. We claim that for all $r\geq 0$,
\begin{align*}
\hat d_{\mcI}^p(M,Q^{(r,r)}) &\geq 4^\frac{1}{p}r,\\
\hat d_{\mcI}^p(M,Q^{(0,0)}) &\leq 2^\frac{1}{p},\\
\hat d_{\mcI}^p(Q^{(0,0)},Q^{(r,r)}) &\leq 2^\frac{1}{p}r.
\end{align*}
Thus, \[\hat d_{\mcI}^p(M,Q^{(r,r)})>\hat d_{\mcI}^p(M,Q^{(0,0)})+\hat d_{\mcI}^p(Q^{(0,0)},Q^{(r,r)})\] for $r$ large enough, violating the triangle inequality.

To obtain the lower bound for $d_{\mcI}^p(M,Q^{(r,r)})$, let $\gamma\colon F\to G$ be a presentation of $M$, and let $[\gamma]$ be its matrix representation with respect to a choice of ordered bases for $F$ and $G$.  Write the basis of $G$ as $(g_1,\dots,g_k)$.  Since $M_{(0,-1)}=k$ and $M_a=0$ for all $a<(0,-1)$, $G$ must have a basis element $g_i$ at $(0,-1)$, meaning that the $i^\textrm{th}$ row label of $[\gamma]$ is $(0,-1)$. Moreover, since $M_{(0,-1)\subto a}$ is an isomorphism for all $a\geq (0,-1)$, we may choose $g_i$ so that 
\[G_{(0,-1)\subto a}(g_i)\not \in \im \gamma_a\]
for all $a\geq (0,-1)$. It then follows that the $i^{\mathrm{th}}$ standard basis vector of $\R^{k}$ does not lie in the column space of $[\gamma]$. Similarly, $G$ also has a basis element $g_j$ at $(-1,0)$ such that the $j^{\mathrm{th}}$ standard basis vector of $\R^{k}$ is not in the column space of $[\gamma]$.

Now let $\gamma'\colon F'\to G'$ be a presentation of $Q^{(r,r)}$, and let $[\gamma']$ be its matrix representation with respect to a choice of ordered bases for $F'$ and $G'$ such that $[\gamma]$ and $[\gamma']$ have the same underlying matrix.  Let $l_i$ and $l_j$ denote the respective labels for rows $i$ and $j$ of $[\gamma']$.  Since the $i^{\mathrm{th}}$ and $j^{\mathrm{th}}$ standard basis vectors are not in the column space of $[\gamma']$, we must have $Q^{(r,r)}_{l_i}\ne 0$ and $Q^{(r,r)}_{l_j}\ne 0$, which implies that $l_i,l_j\geq (r,r)$.  Therefore, 
 \begin{align*}
d^p([\gamma],[\gamma'])&\geq \|((r,r)-(0,-1),(r,r)-(-1,0))\|_p\\
&= (r^p+(r+1)^p+(r+1)^p+r^p)^\frac{1}{p}\\
&\geq 4^\frac{1}{p}r.
\end{align*}
Since we have chosen the presentation of $M$ arbitrarily, it follows that $\hat d_{\mcI}^p(M,Q^{(r,r)}) \geq 4^\frac{1}{p}r$.

Next, we show that $\hat d_{\mcI}^p(M,Q^{(0,0)}) \leq 2^\frac{1}{p}$. $M$ and $Q^{(0,0)}$ can both be presented with the matrix 
$[1;-1]$. To present $M$, we take the row labels to be $(0,-1)$ and $(-1,0)$, and the column label to be $(0,0)$. To present $Q^{(0,0)}$, we let all the labels be $(0,0)$.  
The $d^p$-distance between these presentations is \[\|((0,0)-(0,-1),(0,0)-(-1,0),(0,0)-(0,0))\|_p = (1^p+1^p)^\frac{1}{p} = 2^\frac{1}{p},\] giving $\hat d_{\mcI}^p(M,Q^{(0,0)}) \leq 2^\frac{1}{p}$.

To show the last inequality, we use the obvious presentations $0\to Q^{(0,0)}$ and $0\to Q^{(r,r)}$. The $d^p$-distance between these presentations is $\|(r,r)-(0,0)\|_p=2^\frac{1}{p}r$, so $\hat d_{\mcI}^p(Q^{(0,0)},Q^{(r,r)}) \leq 2^\frac{1}{p}r$.

\end{example}

Though $\hat d^p_\mcI$ does not satisfy the triangle inequality, it generates an extended metric $d^p_\mcI$ on isomorphism classes of persistence modules:

\begin{definition}[Presentation Distance]\label{Def:Pres_Distance}
For $\R^n$-persistence modules $M$ and $N$ and $p\in [1,\infty]$, define \[d^p_\mcI(M,N)=
\inf\, \sum_{i=0}^{l-1} \hat d^p_\mcI(Q_i,Q_{i+1}),\] where the infimum is  taken over all finite sequences $Q_0,Q_1,\ldots,Q_l$ of finitely presented persistence modules with $Q_0=M$ and $Q_l=N$. We call $d_\mcI^p$ the \emph{$p$-presentation distance}.
\end{definition}

\begin{proposition}\label{Prop:Largest_Lower_Bound}\mbox{}
\begin{itemize}
\item[(i)] $d_{\mcI}^p$ is a distance (i.e., extended pseudometric) on finitely presented $\R^n$-persistence modules.
\item[(ii)] $d_{\mcI}^p$ is the largest distance bounded above by $\hat d_{\mcI}^p$; that is, if $d$ is a distance on $\R^n$-persistence modules and $d(M,N)\leq \hat d_{\mcI}^p(M,N)$ for all $M,N$, then $d\leq d_{\mcI}^p$.
\end{itemize}
\end{proposition}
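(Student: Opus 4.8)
The plan is to recognize $d_{\mcI}^p$, as defined in \cref{Def:Pres_Distance}, as the output of the standard path-metric (``metric reflection'') construction applied to the function $\hat d_{\mcI}^p$, and then to invoke the elementary fact that this construction always produces the largest extended pseudometric bounded above by the function to which it is applied. The only properties of $\hat d_{\mcI}^p$ needed are: \emph{(a)} $\hat d_{\mcI}^p(N,M)=\hat d_{\mcI}^p(M,N)$; and \emph{(b)} $\hat d_{\mcI}^p(M,M)=0$. Both follow immediately from the definition: (a) holds because $(P_M,P_N)\mapsto (P_N,P_M)$ is a bijection $P_{M,N}\to P_{N,M}$ along which $d^p(P_M,P_N)=\|\labels(P_M)-\labels(P_N)\|_p$ is unchanged (each term $\|\,\cdot\,\|_p$ of the difference is invariant under negating the difference); and (b) holds because for any finite presentation $P$ of $M$ the pair $(P,P)$ lies in $P_{M,M}$ with $d^p(P,P)=0$, so the infimum defining $\hat d_{\mcI}^p(M,M)$ is $0$. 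I will nowhere use finiteness of $\hat d_{\mcI}^p$: all quantities may take the value $\infty$, as is appropriate when working with extended pseudometrics.

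For (i), I would check the three axioms directly from \cref{Def:Pres_Distance}. The length-one chain $Q_0=Q_1=M$ gives $d_{\mcI}^p(M,M)\le \hat d_{\mcI}^p(M,M)=0$. Reversing a chain $M=Q_0,\dots,Q_l=N$ produces a chain $N=Q_l,\dots,Q_0=M$ of the same cost by (a), so the two sets of admissible chain-costs coincide and hence so do their infima, giving symmetry. For the triangle inequality, given $\varepsilon>0$ I would pick chains from $M$ to $N$ and from $N$ to $P$ whose costs are within $\varepsilon$ of $d_{\mcI}^p(M,N)$ and $d_{\mcI}^p(N,P)$ respectively (assuming both are finite, else there is nothing to prove); their concatenation is a chain of finitely presented modules from $M$ to $P$ whose cost is the sum of the two costs, so letting $\varepsilon\to 0$ yields $d_{\mcI}^p(M,P)\le d_{\mcI}^p(M,N)+d_{\mcI}^p(N,P)$. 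Since all chain-costs are non-negative, $d_{\mcI}^p$ takes values in $[0,\infty]$, and so it is an extended pseudometric on finitely presented $\R^n$-persistence modules.

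For (ii), the bound $d_{\mcI}^p\le \hat d_{\mcI}^p$ is witnessed by the length-one chain $M,N$. For maximality, let $d$ be any distance on $\R^n$-persistence modules with $d(M,N)\le \hat d_{\mcI}^p(M,N)$ for all (finitely presented) $M,N$. Then for every chain $M=Q_0,\dots,Q_l=N$ of finitely presented modules, iterating the triangle inequality for $d$ gives
\[
d(M,N)\ \le\ \sum_{i=0}^{l-1} d(Q_i,Q_{i+1})\ \le\ \sum_{i=0}^{l-1}\hat d_{\mcI}^p(Q_i,Q_{i+1}),
\]
and taking the infimum over all such chains gives $d(M,N)\le d_{\mcI}^p(M,N)$.

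I do not expect any real obstacle here: this is the textbook construction of the largest pseudometric dominated by a given symmetric function, and the genuinely nontrivial content — namely that the passage from $\hat d_{\mcI}^p$ to $d_{\mcI}^p$ is actually needed, i.e.\ that $\hat d_{\mcI}^p$ violates the triangle inequality when $p<\infty$ and $n\ge 2$ — has already been supplied by \cref{Ex:Triangle_eq_fails}. The only step requiring a moment's care is the reduction to properties (a) and (b) of $\hat d_{\mcI}^p$ noted above, which keeps the argument independent of any subtler facts about presentation matrices (such as whether $P_{M,N}$ is always nonempty).
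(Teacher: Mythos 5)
Your proof is correct and takes essentially the same route as the paper's: verify the triangle inequality for $d_{\mcI}^p$ by concatenating chains, and establish maximality by iterating the triangle inequality of any competing distance $d$ along a chain and then taking the infimum. Your extra care in isolating the symmetry and reflexivity of $\hat d_{\mcI}^p$ as the only inputs needed (which the paper dispatches with ``it is clear that\ldots'') is a harmless elaboration, not a different argument.
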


\begin{proof}
It is clear that $d_{\mcI}^p$ satisfies all the properties of an extended pseudometric besides the triangle inequality.  To check that $d_{\mcI}^p$ also satisfies the triangle inequality, let $M,$ $M'$, and $N$ be modules.  For all $\delta>d_\mcI^p(M,M')$ and $\delta'>d_\mcI^p(M',N)$, there exist modules $M=Q_0,\dots,Q_l=M'$ and $M'=Q_0',\dots,Q_m'=N$ such that \[\sum_{i=0}^{l-1} \hat d^p_\mcI(Q_i,Q_{i+1})<\delta \quad \text{and} \quad \sum_{i=0}^{m-1} \hat d^p_\mcI(Q_i',Q_{i+1}')<\delta'.\] Concatenating the sums, we get that $d_\mcI^p(M,N)\leq \delta+\delta'$. It follows that \[d_\mcI^p(M,N)\leq d_\mcI^p(M,M')+d_\mcI^p(M',N),\] so $d_\mcI^p$ satisfies the triangle inequality, proving (i).

For any modules $M$ and $N$ and $\delta>d_\mcI^p(M,N)$, there exist modules $M=Q_0,Q_1,\dots,Q_l=N$ such that \[\sum_{i=0}^{l-1} \hat d^p_\mcI(Q_i,Q_{i+1})<\delta.\]  Let $d$ be a distance bounded above by $\hat d_\mcI^p$. Then 
\[d(M,N)\leq \sum_{i=0}^{l-1} d(Q_i,Q_{i+1}) \leq \sum_{i=0}^{l-1} \hat d^p_\mcI(Q_i,Q_{i+1}) < \delta.\] Since this holds for all $\delta>d_\mcI^p(M,N)$, we have $d(M,N)\leq d_\mcI^p(M,N)$, which proves (ii).
\end{proof}

\begin{remark}
We note that \cref{Prop:Largest_Lower_Bound} holds in more generality than stated:  Given any $\hat d:X\times X\to [0,\infty]$ satisfying all the properties of an extended pseudometric except the triangle inequality, we can construct an extended pseudometric $d:X\times X\to [0,\infty]$ from $\hat d$ exactly as in \cref{Def:Pres_Distance}.  \cref{Prop:Largest_Lower_Bound} then generalizes to $\hat d$ and $d$ with the same proof.
\end{remark}

\begin{proposition}
\label{Prop:Presentation_pq_inequality}
For finitely presented $\R^n$-persistence modules $M$, $N$ and $p\leq q\in [1,\infty]$, we have that $d^p_\mcI(M,N)\geq d^q_\mcI(M,N)$.
\end{proposition}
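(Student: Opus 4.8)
The plan is to reduce the statement to the elementary fact that on a fixed finite‑dimensional real vector space the $\ell^p$‑norms are non‑increasing in $p$. The only thing that needs a moment's care is to recognize that the norm $\|\cdot\|_p$ of \cref{Not:function_p_norm}, applied to a finitely supported $\R^n$‑valued function, is literally an ordinary $\ell^p$‑norm on a Euclidean space. Indeed, for $f\colon S\to\R^n$ with $S$ finite and $p\in[1,\infty)$ one has
\[\|f\|_p^p=\sum_{x\in S}\|f(x)\|_p^p=\sum_{x\in S}\sum_{i=1}^{n}|f(x)_i|^p,\]
so $\|f\|_p$ is the usual $\ell^p$‑norm of $f$ viewed as a vector in $\R^{n|S|}$, and likewise $\|f\|_\infty=\sup_{x\in S}\|f(x)\|_\infty=\max_{x\in S,\,1\le i\le n}|f(x)_i|$. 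Consequently, for $p\le q\in[1,\infty]$, the standard monotonicity $\|v\|_q\le\|v\|_p$ on $\R^{n|S|}$ gives $\|f\|_q\le\|f\|_p$. (The case $p=q$ is trivial, so assume $p<q$.)

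Next I would push this through the definition of $\hat d^p_\mcI$. Fix finitely presented $M$ and $N$. Every pair $(P_M,P_N)\in P_{M,N}$ has the same underlying matrix, so $\labels(P_M)-\labels(P_N)$ is a well‑defined finitely supported $\R^n$‑valued function, and the previous paragraph yields $d^q(P_M,P_N)=\|\labels(P_M)-\labels(P_N)\|_q\le\|\labels(P_M)-\labels(P_N)\|_p=d^p(P_M,P_N)$. Taking the infimum over all $(P_M,P_N)\in P_{M,N}$ gives $\hat d^q_\mcI(M,N)\le\hat d^p_\mcI(M,N)$ for all finitely presented $M,N$.

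Finally I would unwind \cref{Def:Pres_Distance}. Given any finite sequence $M=Q_0,Q_1,\dots,Q_l=N$ of finitely presented modules, the inequality just proved gives, term by term,
\[\sum_{i=0}^{l-1}\hat d^q_\mcI(Q_i,Q_{i+1})\ \le\ \sum_{i=0}^{l-1}\hat d^p_\mcI(Q_i,Q_{i+1}),\]
and the left‑hand side is by definition $\ge d^q_\mcI(M,N)$. Taking the infimum of the right‑hand side over all such sequences yields $d^q_\mcI(M,N)\le d^p_\mcI(M,N)$, which is the claim. There is no real obstacle in this argument; its entire content is the identification of $\|\cdot\|_p$ on finitely supported functions with a genuine $\ell^p$‑norm, after which monotonicity of $\ell^p$‑norms and the two layers of infima ($\hat d^p_\mcI$ over presentations, then $d^p_\mcI$ over chains) propagate the inequality automatically.
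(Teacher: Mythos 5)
Your proof is correct and is essentially the same as the paper's, which simply cites the standard fact $\|v\|_p\geq\|v\|_q$ for $p\leq q$ and leaves the propagation through the two infima implicit. You have merely spelled out the details the paper omits.
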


\begin{proof}
It is a standard fact that for any $v\in \R^n$, $\|v\|_p\geq \|v\|_q$.  The result follows from this.
\end{proof}

\begin{proposition}
$d^p_\mcI$ descends to an extended metric on isomorphism classes of finitely presented $\R^n$-persistence modules.
\end{proposition}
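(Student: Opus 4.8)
The plan is to show that $d_\mcI^p$ separates non-isomorphic finitely presented modules, since \cref{Prop:Largest_Lower_Bound}\,(i) already tells us $d_\mcI^p$ is an extended pseudometric; descending to isomorphism classes is automatic once we know $d_\mcI^p(M,N)>0$ whenever $M\not\cong N$. By \cref{Prop:Presentation_pq_inequality}, $d_\mcI^p \geq d_\mcI^q$ for $p\leq q$, and in particular $d_\mcI^p \geq d_\mcI^\infty = d_\mcI$ by \cref{Thm:Main_Thm_Intro}\,(i). So it suffices to invoke \cref{Rem:Interleaving_Dist_Is_Metric}, which states that $d_\mcI$ (i.e., $d_\mcI^\infty$) is already an extended metric on isomorphism classes of finitely presented modules: if $M\not\cong N$ then $d_\mcI(M,N)>0$, hence $d_\mcI^p(M,N)\geq d_\mcI(M,N)>0$.

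First I would note that $d_\mcI^p$ is a well-defined function on isomorphism classes: $\hat d_\mcI^p(M,N)$ depends only on the isomorphism types of $M$ and $N$, since the set $P_{M,N}$ of pairs of presentation matrices with a common underlying matrix is unchanged under replacing $M$ or $N$ by an isomorphic module (a presentation of $M$ is also a presentation of any $M'\cong M$). Consequently each term $\hat d_\mcI^p(Q_i,Q_{i+1})$ in \cref{Def:Pres_Distance} is isomorphism-invariant, so $d_\mcI^p$ descends to isomorphism classes. Combined with \cref{Prop:Largest_Lower_Bound}\,(i), $d_\mcI^p$ is then an extended pseudometric on isomorphism classes.

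Second, to upgrade ``pseudometric'' to ``metric'' on isomorphism classes, I would argue as above: for $M\not\cong N$, apply \cref{Prop:Presentation_pq_inequality} with $q=\infty$ to get $d_\mcI^p(M,N)\geq d_\mcI^\infty(M,N)$, then use \cref{Thm:Main_Thm_Intro}\,(i) to identify $d_\mcI^\infty=d_\mcI$, and finally \cref{Rem:Interleaving_Dist_Is_Metric} to conclude $d_\mcI(M,N)>0$. There is no serious obstacle here — the only point requiring care is the logical dependency, since \cref{Thm:Main_Thm_Intro}\,(i) is proved later in \cref{Sec:PresWDist}; one should check it is established before (or independently of) this proposition, or else state the result as conditional on it. This is exactly the same dependency already used just above in the excerpt (``By \cref{Thm:Main_Thm_Intro}\,(i) and \cref{Rem:Interleaving_Dist_Is_Metric}\ldots''), so there is no circularity.
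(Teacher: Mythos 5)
Your proof is correct and follows essentially the same route as the paper's: well-definedness on isomorphism classes is observed directly, positivity for $M\not\cong N$ is obtained via $d_\mcI^p \geq d_\mcI^\infty = d_\mcI$ (using \cref{Prop:Presentation_pq_inequality} and \cref{Thm:Main_Thm_Intro}\,(i)), and then the known fact that $d_\mcI$ is a metric on isomorphism classes of finitely presented modules (\cref{Rem:Interleaving_Dist_Is_Metric}, citing \cite[Theorem 6.1]{lesnick2015theory}) finishes the argument. Your explicit remark on the forward dependency on \cref{Thm:Main_Thm_Intro}\,(i) is appropriate and matches the paper, which acknowledges the same dependency in its own proof.
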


\begin{proof}
Clearly, $d^p_\mcI$ descends to an extended pseudometric on isomorphism classes of finitely presented $\R^n$-persistence modules.  To show that it in fact descends to an extended metric, we need to check that $d^p_\mcI(M,N)=0$ only if $M\cong N$.  It was shown in \cite[Theorem 6.1]{lesnick2015theory} that for finitely presented persistence modules $M$ and $N$, $d_{\mcI}(M,N)=0$ only if $M\cong N$.

Suppose $d^p_\mcI(M,N)=0$. We have
\[d^p_\mcI(M,N)\geq d_{\mcI}^\infty(M,N)=d_{\mcI}(M,N),\]
where the inequality follows from \cref{Prop:Presentation_pq_inequality} and the equality follows from \cref{Thm:Main_Thm_Intro}\,(i), which we prove in the next section. Thus, $d_{\mcI}(M,N)=0$, and then \cite[Theorem 6.1]{lesnick2015theory} implies that $M\cong N$.
\end{proof}

\subsection{Equality of $d_{\mcI}$ and $d^\infty_\mcI$.}
\label{Sec:int=inf}

The following result is implicit in \cite[Section 4]{lesnick2015theory}:
\begin{theorem}[Characterization of the Interleaving Distance using  Presentations]\label{Prop:Presentation_Interleaving_Equality}
For all finitely presented $n$-parameter persistence modules $M$ and $N$, $\hat d_{\mcI}^\infty(M,N)=d_{\mcI}(M,N)$.
\end{theorem}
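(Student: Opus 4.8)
The plan is to establish the two inequalities $d_{\mcI}(M,N)\le\hat d_{\mcI}^\infty(M,N)$ and $\hat d_{\mcI}^\infty(M,N)\le d_{\mcI}(M,N)$ separately. For the first I would show that a pair of presentations with a common underlying matrix and $\ell^\infty$-close labels directly produces an interleaving of the two cokernels. For the second I would view a $\delta$-interleaving of $M$ and $N$ as a persistence module over the poset $\I^\delta$, take a finite presentation of \emph{that} module, and restrict it along the two copies $\mathcal J^0,\mathcal J^1$ of $\R^n$ inside $\I^\delta$.

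For $d_{\mcI}\le\hat d_{\mcI}^\infty$: fix $(P_M,P_N)\in P_{M,N}$ with common underlying $k$-matrix $A$, row labels $(r_i)$ resp.\ $(r_i')$, column labels $(c_j)$ resp.\ $(c_j')$, and set $\delta=d^\infty(P_M,P_N)$. Realize these presentations by morphisms $\gamma\colon F\to G$ of $M$ and $\gamma'\colon F'\to G'$ of $N$ with $G=\bigoplus_i Q^{r_i}$, $F=\bigoplus_j Q^{c_j}$, etc. Since $\|r_i-r_i'\|_\infty\le\delta$ we have $r_i'-\vec\delta\le r_i$, so there is a canonical morphism $Q^{r_i}\to Q^{r_i'-\vec\delta}$ (the identity on every grade where both are nonzero), and these assemble into $G\to G'(\vec\delta)=\bigoplus_i Q^{r_i'-\vec\delta}$; likewise one gets $F\to F'(\vec\delta)$. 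Because $\gamma$ and $\gamma'(\vec\delta)$ are represented by the same matrix $A$, this pair is a morphism of presentations and hence induces a map $\phi\colon M\to N(\vec\delta)$ on cokernels; symmetrically, using $r_i-\vec\delta\le r_i'$ and $c_j-\vec\delta\le c_j'$, one builds $\psi\colon N\to M(\vec\delta)$. The interleaving identities (e.g.\ $\psi(\vec\delta)\circ\phi$ equals the internal $2\vec\delta$-transition map of $M$, and its mirror) then follow because a composite of canonical morphisms $Q^{x}\to Q^{y}\to Q^{z}$ is again the canonical morphism $Q^{x}\to Q^{z}$, so on the free modules the composite is just the $2\vec\delta$-transition map, which passes to cokernels. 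Thus $(\phi,\psi)$ is a $\delta$-interleaving, so $d_{\mcI}(M,N)\le\delta$, and taking the infimum over $P_{M,N}$ gives the inequality.

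For $\hat d_{\mcI}^\infty\le d_{\mcI}$: let $\delta>0$ be such that a $\delta$-interleaving $Z\colon\I^\delta\to\Vect$ exists, and note that $\I^\delta$ is a poset for $\delta>0$, so $\Vect^{\I^\delta}$ has free modules $Q^{(x,i)}$ and a notion of presentation. Restriction along $\mathcal J^i$ is exact (exactness is objectwise) and carries $Q^{(x,0)}$ to $Q^{x}$ or $Q^{x+\vec\delta}$ according as $i=0$ or $1$, and $Q^{(x,1)}$ likewise; in particular it sends finite free $\I^\delta$-modules to finite free $\R^n$-modules. The key step is to check that $Z$ is finitely presented: it is finitely generated by the generators of $M=Z\circ\mathcal J^0$ placed at level $0$ together with those of $N=Z\circ\mathcal J^1$ placed at level $1$ (every element of $Z$ lies at some level $i$ and is reached using only within-level transition maps), and the kernel of the resulting surjection from a finite free $\I^\delta$-module is, by the same argument, finitely generated by the generators of its restrictions to the two copies of $\R^n$, which are finitely generated because $M$ and $N$ are finitely presented. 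Now take a finite presentation $\hat\gamma\colon\hat F\to\hat G$ of $Z$; restricting it along $\mathcal J^0$ and $\mathcal J^1$ yields finite presentations of $M$ and of $N$ with the same underlying matrix, and the grade of a given basis element of $\hat G$ (or $\hat F$) in the two restrictions differs by exactly $\vec\delta$, so the two label vectors are at $\ell^\infty$-distance $\delta$. Hence $\hat d_{\mcI}^\infty(M,N)\le\delta$; letting $\delta$ decrease to $d_{\mcI}(M,N)$ completes the proof.

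The main obstacle is the middle step of the second direction: developing just enough of the theory of free modules and presentations over the poset $\I^\delta$ (covered by the paper's general construction of $Q^x$ over any poset) and proving that the interleaving module $Z$ is finitely presented, together with verifying that restriction along the $\mathcal J^i$ behaves as claimed on free modules and on matrices of morphisms. Once this is set up, the remaining pieces — the commuting-square constructions of $\phi$ and $\psi$, the interleaving identities, and the bookkeeping of labels — are routine diagram chases.
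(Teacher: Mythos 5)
Your proof is correct and follows essentially the same route as the paper's: the easy inequality $d_{\mcI}\le\hat d_{\mcI}^\infty$ by assembling an interleaving from a common-matrix pair of presentations, and the hard inequality $\hat d_{\mcI}^\infty\le d_{\mcI}$ by treating the interleaving as a module over $\I^\delta$, showing it is finitely presented via the finite generation of its restrictions to the two copies of $\R^n$, and restricting a finite presentation along $\mathcal J^0,\mathcal J^1$ (the paper packages this restriction step as a ``push map'' lemma, but the computation is the same). The one point you state tersely and should make explicit is that $\ker(\gamma\circ\mathcal J^i)$ is finitely generated: this needs the fact that kernels of morphisms of finitely presented $\R^n$-modules are finitely presented, which the paper establishes via Noetherianity of $\Z^n$-graded modules together with the left-Kan-extension reduction to the $\Z^n$-indexed case.
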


\begin{remark}
In fact, a generalization of \cref{Prop:Presentation_Interleaving_Equality}, which holds for arbitrary (not necessarily finitely presented) $n$-parameter persistence modules, is implicit in  \cite[Section 4]{lesnick2015theory}.
\end{remark}

\cref{Thm:Main_Thm_Intro}\,(i) follows easily from \cref{Prop:Presentation_Interleaving_Equality}.  We recall the statement:
\begin{theorem1i}
For all finitely presented $n$-parameter persistence modules $M$ and $N$, $d_{\mcI}^\infty(M,N)=d_{\mcI}(M,N)$.
\end{theorem1i}

\begin{proof}
Since $d_{\mcI}$ satisfies the triangle inequality and $\hat d^\infty_\mcI=d_{\mcI}$, we see that $\hat d^\infty_\mcI$ also satisfies the triangle inequality.  \cref{Prop:Largest_Lower_Bound}\,(ii) then implies that $d^\infty_\mcI=\hat d^\infty_\mcI=d_{\mcI}$. 
\end{proof}

As noted in the introduction, the proof of \cref{Prop:Presentation_Interleaving_Equality} appearing in \cite{lesnick2015theory}, while not difficult, is rather technical and unintuitive.  Here, we present a more intuitive proof.   The key idea behind the proof is \cref{Prop:Pushes} below, which will also be of use to us later, in the proof of \cref{Thm:Main_Thm_Intro}\,(iii). 

\begin{definition}\label{Def:Push_Map}
For ${\mathcal J}\colon X\hookrightarrow Y$ an injection of posets, we say $Y$ \emph{pushes onto $\im \mathcal J$} if for any $q\in Y$, there is a unique minimal element $p\in X$ such that $q\leq \mathcal J(p)$.  If $Y$ pushes onto $\im \mathcal J$, then this defines a function \[\push^{\mathcal J}\colon Y\to X\] sending each $q\in Y$ to such $p$.
\end{definition}

In the special case that $\mathcal J$ is the inclusion of an affine line into $\R^2$, the maps $\push^{\mathcal J}$ were introduced in \cite{lesnick2015interactive} and subsequently also used in \cite{kerber2019exact,vipond2020local}.

\begin{example}\label{Ex:Interleaving_Push}
For $\delta>0$, we may regard the $\delta$-interleaving category $I^\delta$ of \cref{Sec:Interleavings} as a poset and ${\mathcal J}^0,{\mathcal J}^1\colon\R^n\to I^\delta$ as injections of posets.  Then $I^\delta$ pushes onto both $\im({\mathcal J}^0)$ and $\im({\mathcal J}^1)$.
\end{example}

\begin{proposition}\label{Prop:Pushes}
Let ${\mathcal J}\colon X\to Y$ be an injection of posets such that $Y$ pushes onto $\im \mathcal J$. A presentation matrix $P_M$ of a $Y$-persistence module $M$ induces a presentation matrix $P_M^{\mathcal J}$ of $M\circ {\mathcal J}$ with the same underlying matrix and \[\labels(P_M^{\mathcal J})=\push^{\mathcal J} \circ \labels(P_M).\]
\end{proposition}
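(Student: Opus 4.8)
The plan is to push a presentation of $M$ through the restriction functor $R:=(-)\circ\mathcal{J}\colon\Vect^Y\to\Vect^X$, and to check that $R$ is compatible enough with free modules and with matrix representations that the underlying matrix is left unchanged while the labels get transported by $\push^{\mathcal{J}}$.

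First I would write $P_M=[\gamma]^{B_G,B_F}$ for a morphism $\gamma\colon F\to G$ of free $Y$-persistence modules with $\coker\gamma\cong M$, relative to chosen ordered bases $B_F$ of $F$ and $B_G$ of $G$; so the column (resp.\ row) labels of $P_M$ are the grades of the elements of $B_F$ (resp.\ $B_G$). Since kernels, cokernels, and exactness of persistence modules are computed objectwise and $R$ acts objectwise, $R$ is exact; applying it to the exact sequence $F\xrightarrow{\gamma}G\to M\to 0$ shows that $R(\gamma)=\gamma\circ\mathcal{J}$ has cokernel $M\circ\mathcal{J}$, so it is a presentation of $M\circ\mathcal{J}$ once we know $R(F)$ and $R(G)$ are free. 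The key point making this work is the identity $Q^y\circ\mathcal{J}\cong Q^{\push^{\mathcal{J}}(y)}$ for each $y\in Y$: indeed $(Q^y\circ\mathcal{J})_a=k$ iff $y\leq\mathcal{J}(a)$, which by the defining property of $\push^{\mathcal{J}}$ (i.e.\ $\push^{\mathcal{J}}(y)\leq a\iff y\leq\mathcal{J}(a)$) holds iff $\push^{\mathcal{J}}(y)\leq a$, and the internal maps match for the same reason. Since $R$ also preserves direct sums, it carries $\bigoplus_y Q^y$ to $\bigoplus_y Q^{\push^{\mathcal{J}}(y)}$; unwinding this, $B_F$ induces an ordered basis $B_F'$ of $R(F)$ whose $j$-th element is the image of the $j$-th element of $B_F$ under an internal map of $F$ and has grade $\push^{\mathcal{J}}(\gr b_j)$, and likewise $B_G$ induces $B_G'$.

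It then remains to compare $[R(\gamma)]^{B_G',B_F'}$ with $[\gamma]^{B_G,B_F}$. By naturality of $\gamma$, $R(\gamma)$ sends the $j$-th element of $B_F'$ to the image under an internal map of $G$ of $\gamma(b_j)$; expanding $\gamma(b_j)$ in $B_G$ and pushing that expansion forward (using that $\push^{\mathcal{J}}$ is monotone, so the requisite internal maps of $R(G)$ exist) expresses the result in $B_G'$ with the very same scalar coefficients, so the two matrices coincide — this is the restriction analogue of \cref{Prop:Free_Modules_and_Extension}\,(iii), proved the same way. Taking $P_M^{\mathcal{J}}:=[R(\gamma)]^{B_G',B_F'}$ then yields a presentation matrix of $M\circ\mathcal{J}$ with the same underlying matrix as $P_M$ and with $\labels(P_M^{\mathcal{J}})=\push^{\mathcal{J}}\circ\labels(P_M)$. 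I do not expect any real difficulty here: the content is entirely in the easy identity $Q^y\circ\mathcal{J}\cong Q^{\push^{\mathcal{J}}(y)}$, and the only place demanding care is the final bookkeeping that identifies the ``evident'' basis of the restricted free module as the correct one against which to read off the matrix.
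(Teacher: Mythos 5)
The paper declares this proof ``straightforward'' and leaves it to the reader, so there is no paper proof to compare against; your argument is the natural one the authors have in mind and it is correct. The one small point worth being explicit about is that your step ``$\push^{\mathcal J}(y)\leq a\iff y\leq\mathcal J(a)$'' requires reading the paper's phrase ``unique minimal element'' in \cref{Def:Push_Map} as ``minimum'' of $\{a\in X: y\leq\mathcal J(a)\}$ (equivalently, that $\push^{\mathcal J}$ is right adjoint to $\mathcal J$); this is the intended reading and in fact necessary for the proposition to hold at all, since $Q^y\circ\mathcal J$ must be a \emph{principal} up-set module, and it is what actually holds in the instances used (affine lines, the interleaving category). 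With that understood, the chain ``restriction is objectwise hence exact, $Q^y\circ\mathcal J\cong Q^{\push^{\mathcal J}(y)}$, direct sums are preserved, and the induced bases read off the same scalar matrix'' is exactly the intended argument, and your appeal to the uniqueness of $[v]^B$ to identify the coefficients is the correct way to close the bookkeeping.
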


The proof of \cref{Prop:Pushes} is straightforward; we leave the details to the reader. 

\begin{remark}
 In the case that $\mathcal J$ is the inclusion of an affine line into $\R^2$, \cref{Prop:Pushes} appears in \cite[Remark 4]{kerber2019exact}.  Recent work of Ezra Miller \cite[Proposition 6.3]{miller2020homological} gives a generalization of \cref{Prop:Pushes}, where the (co)domains of the presentations are taken to be direct sums of the indicator modules of upsets.
\end{remark}

\begin{lemma}\label{Lem:Induced_Presentations_On_Spines}
For $\delta>0$, suppose that $Z\colon I^\delta\to \Vect$ is a finitely presented $\delta$-interleaving between $\R^n$-persistence modules 
$M$ and $N$, and $P_Z$ is a presentation matrix for $Z$.  Then there exist presentation matrices $P_M$ and $P_N$ for $M$ and $N$ with the same underlying matrix as $P_Z$, such that $d^\infty(P_M,P_N)\leq \delta$.
\end{lemma}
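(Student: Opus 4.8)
The plan is to obtain $P_M$ and $P_N$ by pushing the presentation $P_Z$ down onto the two ``spines'' of $I^\delta$ via \cref{Prop:Pushes}, and then to read off the desired labelwise bound from an explicit formula for the relevant push maps.

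First I would recall from \cref{Ex:Interleaving_Push} that, viewing $I^\delta$ as a poset and ${\mathcal J}^0,{\mathcal J}^1\colon \R^n\to I^\delta$ as poset injections, $I^\delta$ pushes onto both $\im({\mathcal J}^0)$ and $\im({\mathcal J}^1)$, so that $\push^{{\mathcal J}^0},\push^{{\mathcal J}^1}\colon I^\delta\to \R^n$ are defined. Since $Z$ is finitely presented, $P_Z$ is a finite presentation matrix of the $I^\delta$-persistence module $Z$, and $Z\circ {\mathcal J}^0=M$, $Z\circ {\mathcal J}^1=N$. Applying \cref{Prop:Pushes} to $Z$ with $\mathcal J={\mathcal J}^0$ produces a presentation matrix $P_M:=P_Z^{{\mathcal J}^0}$ of $M$ with the same underlying matrix as $P_Z$ and with $\labels(P_M)=\push^{{\mathcal J}^0}\circ \labels(P_Z)$; applying it with $\mathcal J={\mathcal J}^1$ produces $P_N:=P_Z^{{\mathcal J}^1}$, a presentation matrix of $N$ with the same underlying matrix as $P_Z$ and $\labels(P_N)=\push^{{\mathcal J}^1}\circ \labels(P_Z)$. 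In particular $P_M$ and $P_N$ have a common underlying matrix, so $d^\infty(P_M,P_N)$ is defined and equals $\sup_q \|\push^{{\mathcal J}^0}(q)-\push^{{\mathcal J}^1}(q)\|_\infty$, where $q$ ranges over the finitely many labels of $P_Z$.

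It then remains to check that $\|\push^{{\mathcal J}^0}(q)-\push^{{\mathcal J}^1}(q)\|_\infty\leq \delta$ for every $q\in I^\delta$. Writing $q=(a,j)$ with $a\in\R^n$ and $j\in\{0,1\}$, I would verify directly from the definition of the morphisms of $I^\delta$ that $\push^{{\mathcal J}^j}(q)=a$ (the unique minimal $p$ with $a\leq p$, coming from condition (2)) while $\push^{{\mathcal J}^{1-j}}(q)=a+\vec\delta$ (the unique minimal $p$ with $a+\vec\delta\leq p$, since a morphism from $(a,j)$ into the other copy of $\R^n$ must come from condition (1) of the definition of $I^\delta$). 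Hence $\push^{{\mathcal J}^0}(q)-\push^{{\mathcal J}^1}(q)=\pm\vec\delta$, so $\|\push^{{\mathcal J}^0}(q)-\push^{{\mathcal J}^1}(q)\|_\infty=\delta$. Taking the supremum over the labels of $P_Z$ yields $d^\infty(P_M,P_N)\leq \delta$, as required.

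There is no real obstacle here once \cref{Prop:Pushes} and \cref{Ex:Interleaving_Push} are in hand: the only point requiring care is the elementary identification of the two push maps from the definition of $I^\delta$ — in particular that comparing across the two copies of $\R^n$ forces the $\vec\delta$-shift — together with the observation that the resulting estimate holds at each label and hence in the $\ell^\infty$-norm.
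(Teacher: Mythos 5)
Your proposal is correct and follows essentially the same argument as the paper's proof: both apply \cref{Ex:Interleaving_Push} and \cref{Prop:Pushes} to push $P_Z$ onto the two copies of $\R^n$, and then compute $\push^{{\mathcal J}^i}(a,i)=a$ and $\push^{{\mathcal J}^{1-i}}(a,i)=a+\vec\delta$ to obtain the labelwise $\ell^\infty$-bound by $\delta$.
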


\begin{proof}
By \cref{Ex:Interleaving_Push} and \cref{Prop:Pushes}, $P_Z$ induces a presentation of $P_M$ of $Z\circ {\mathcal J}^0=M$ and a presentation $P_N$ of $Z\circ {\mathcal J}^1=N$, both of which have the same underlying matrix as $P_Z$.  For $i\in \{0,1\}$ and $a\in \R^n$, we have 
\[\push^{{\mathcal J}^i}(a,i)=a\qquad\textup{and}\qquad\push^{{\mathcal J}^{1-i}}(a,i)=a+\delta.\]  Hence, \[\|\push^{{\mathcal J}^0}(a,i)-\push^{{\mathcal J}^1}(a,i)\|_{\infty}\leq \delta.\]  It follows that $d^\infty(P_M,P_N)\leq \delta$.  
\end{proof}

\begin{lemma}\label{Lem:Fin_Gen_Ker}
If $\gamma\colon M\to N$ is a morphism of finitely presented $\R^n$-persistence modules, then $\ker \gamma$ is finitely presented.
\end{lemma}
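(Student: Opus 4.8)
The plan is to reduce the statement to the corresponding fact for finitely presented $\Z^n$-persistence modules, where it follows from standard Noetherianity, and then transport along a left Kan extension. First I would invoke \cref{Lem:Fin_Gen_LKE} to write $\gamma$ as $\Lan_\G(\gamma')$ for some morphism $\gamma'\colon M'\to N'$ of finitely generated $\Z^n$-persistence modules along an injective grid function $\G\colon \Z^n\to \R^n$. Since $\Lan_\G$ is exact by \cref{Lem:Exact_Extension}, it carries the short exact sequence $0\to \ker\gamma'\to M'\to \im\gamma'\to 0$ to the short exact sequence $0\to \Lan_\G(\ker\gamma')\to \Lan_\G(M')\to \Lan_\G(\im\gamma')\to 0$, and one checks directly from \cref{Def:Concrete_LKan_Description} that $\Lan_\G$ commutes with taking kernels (kernels are computed objectwise, and $\Lan_\G$ just reindexes objects); hence $\ker\gamma \cong \Lan_\G(\ker\gamma')$. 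So it suffices to show $\ker\gamma'$ is finitely presented and that $\Lan_\G$ preserves finite presentability.

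For the second point: if $\delta'\colon A'\to B'$ is a finite presentation of $\ker\gamma'$, i.e. a morphism of finitely generated free $\Z^n$-modules with $\coker\delta'\cong \ker\gamma'$, then $\Lan_\G(\delta')$ is a morphism of free $\R^n$-modules by \cref{Prop:Free_Modules_and_Extension}\,(i), it is still finitely generated on both sides (the generating sets correspond bijectively, by \cref{Prop:Free_Modules_and_Extension}\,(ii)), and $\coker \Lan_\G(\delta')\cong \Lan_\G(\coker\delta')\cong \Lan_\G(\ker\gamma')\cong\ker\gamma$ by exactness of $\Lan_\G$ (applied to $A'\to B'\to \coker\delta'\to 0$). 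Thus $\Lan_\G(\delta')$ is a finite presentation of $\ker\gamma$.

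It remains to prove that $\ker\gamma'$ is finitely presented when $\gamma'\colon M'\to N'$ is a morphism of finitely presented (equivalently, finitely generated and finitely related) $\Z^n$-persistence modules. This is where the real work lies, and it is a standard commutative-algebra fact: a finitely presented module over the $\Z^n$-graded polynomial ring $k[x_1,\dots,x_n]$ (which is Noetherian) is a Noetherian module, so every submodule — in particular $\ker\gamma'$, which is a submodule of the finitely generated module $M'$ — is finitely generated; and since $M'$ itself has a finite presentation, a standard diagram chase (or Schanuel-type argument) shows the kernel of any map between finitely presented modules over a Noetherian ring is again finitely presented. I expect this reduction-to-Noetherianity step to be the main obstacle, mostly a matter of citing the right form of the statement for $\Z^n$-graded modules and checking that "finitely presented persistence module'' in the sense of \cref{Sec:Free_Mods} coincides with "finitely presented graded module''; the left Kan extension bookkeeping around it is routine given \cref{Lem:Exact_Extension}, \cref{Prop:Free_Modules_and_Extension}, and \cref{Lem:Fin_Gen_LKE}.
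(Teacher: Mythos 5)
Your proposal is correct and follows essentially the same route as the paper: reduce to the $\Z^n$-indexed case via \cref{Lem:Fin_Gen_LKE}, invoke Noetherianity of $k[x_1,\ldots,x_n]$ there, and transport the finite presentation back through $\Lan_\G$ using \cref{Lem:Exact_Extension} and \cref{Prop:Free_Modules_and_Extension}. The only cosmetic difference is that your Schanuel-type step is unnecessary: over a Noetherian ring, a finitely generated module is automatically finitely presented, so $\ker\gamma'$ being a submodule of the Noetherian module $M'$ already suffices.
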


\begin{proof}
The corresponding result for finitely generated $\Z^n$-persistence modules is standard: $\Z^n$-persistence modules are (up to isomorphism of categories) $n$-graded modules over the polynomial ring $k[x_1,\ldots,x_n]$, and the result holds because this ring is Noetherian \cite{eisenbud1995commutative}.  

To obtain the result for $\R^n$-persistence modules, note that by \cref{Lem:Fin_Gen_LKE}, $\gamma$ is the left Kan extension of a morphism $\gamma'$ of finitely generated $\Z^n$-persistence modules along a grid function $\G\colon \Z^n \to \R^n$.  By the corresponding result for the $\Z^n$-indexed case, $\ker \gamma'$ is finitely presented.  \cref{Lem:Exact_Extension} and \cref{Prop:Free_Modules_and_Extension} together imply that a finite presentation of $\ker \gamma'$ induces a finite presentation of $\Lan_{\G}(\ker \gamma')$.  But by \cref{Lem:Exact_Extension}, we have that $ \Lan_{\G}(\ker \gamma')\cong \ker(\gamma)$, so $\ker(\gamma)$ is finitely presented. 
\end{proof}

\begin{lemma}\label{Finitely_Presented_Interleaving}
For all $\delta> 0$, a $\delta$-interleaving $Z\colon\I^\delta\to \Vect$ between finitely presented $\R^n$-persistence modules $M$ and $N$ is finitely presented.
\end{lemma}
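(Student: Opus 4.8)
The plan is to reduce the statement to its analogue over a discrete interleaving category indexed by $\Z^n$, prove it there using the Noetherianity of $\Z^n$-persistence modules, and then transport the conclusion back to $\I^\delta$ via a left Kan extension. The passage to $\Z^n$ is essential, not merely convenient: $\R^n$-persistence modules — and hence $\I^\delta$-persistence modules — are not Noetherian, so submodules of finitely generated modules need not be finitely generated, and one cannot argue finite presentability directly over $\I^\delta$.

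First I would choose a grid function adapted to the situation. Fix finite presentation matrices of $M$ and $N$, let $S\subset\R$ be the finite set of all coordinates occurring among their row and column labels, and put $T=S+\delta\Z$. Since $S$ is finite, $T$ is order-isomorphic to $\Z$, and since $T+\delta=T$, the bijection $t\mapsto t+\delta$ of $T$ is an order-automorphism, hence corresponds under such an isomorphism to a translation $k\mapsto k+m$ of $\Z$ for a fixed integer $m\geq 1$. Let $\G\colon\Z^n\to\R^n$ be an injective grid function all of whose coordinate functions equal one fixed order-isomorphism $\Z\xrightarrow{\sim}T$, chosen so that $\G_i(k+m)=\G_i(k)+\delta$ for all $k$. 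Then $\im\G$ contains every row and column label above, so (arguing as in the proof of \cref{Lem:Fin_Gen_LKE}, using \cref{Lem:Exact_Extension} and \cref{Prop:Free_Modules_and_Extension}) the modules $M':=M\circ\G$ and $N':=N\circ\G$ are finitely generated $\Z^n$-persistence modules and satisfy $\Lan_\G(M')\cong M$, $\Lan_\G(N')\cong N$. Writing $\vec m=(m,\dots,m)$, let $\I^{\vec m}_\Z$ be the interleaving category defined exactly as $\I^\delta$ but over $\Z^n$ with shift vector $\vec m$, and let $\hat\G\colon\I^{\vec m}_\Z\to\I^\delta$ be the functor $(a,i)\mapsto(\G(a),i)$; the identity $\G(a+\vec m)=\G(a)+\vec\delta$ guarantees $\hat\G$ is order-preserving. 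Then $Z':=Z\circ\hat\G$ restricts on the two spines to $M'$ and $N'$, i.e., $Z'$ is an $\vec m$-interleaving between $M'$ and $N'$.

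Next I would show $Z'$ is finitely presented over $\I^{\vec m}_\Z$. It is finitely generated: a finite generating set of $M'$ placed at level $0$ together with a finite generating set of $N'$ placed at level $1$ generates $Z'$, since along each spine the internal maps of $\I^{\vec m}_\Z$ coincide with those of $\Z^n$. To pass from finite generation to finite presentation, I would prove that $\I^{\vec m}_\Z$ has the Noetherian-type property that every submodule of a finitely generated $\I^{\vec m}_\Z$-persistence module is finitely generated. This follows from Noetherianity of $\Z^n$-persistence modules in two steps: (a) the restriction of any finitely generated $\I^{\vec m}_\Z$-module to either spine is a finitely generated $\Z^n$-module, because each generator contributes at most one generator to each spine (itself, or its image under the appropriate internal map); (b) an $\I^{\vec m}_\Z$-module whose two spine restrictions are finitely generated is itself finitely generated, by taking the union of the two sets of spine generators. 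Combining (a), Noetherianity of $\Z^n$-modules, and (b) gives the property for an arbitrary submodule of a finitely generated free module. Consequently, choosing a surjection from a finitely generated free module onto $Z'$, its kernel is a submodule of a finitely generated free module, hence finitely generated, and a finitely generated free module surjecting onto the kernel produces a finite presentation $F'\to G'\to Z'\to 0$ over $\I^{\vec m}_\Z$.

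Finally I would transport this back along $\hat\G$. The left Kan extension $\Lan_{\hat\G}$ has the concrete description of \cref{Def:Concrete_LKan_Description}: for each $q=(a,i)\in\I^\delta$ the set $\{p\in\I^{\vec m}_\Z\mid\hat\G(p)\leq q\}$ has a maximum, namely $(\G^{-1}(a),i)$ — here the periodicity $\G_i(k+m)=\G_i(k)+\delta$ is exactly what makes the cross-maps work out — and, just as in \cref{Lem:Exact_Extension} and \cref{Prop:Free_Modules_and_Extension}, $\Lan_{\hat\G}$ is exact, carries the free module $Q^{(x,i)}$ over $\I^{\vec m}_\Z$ to the free module $Q^{(\G(x),i)}$ over $\I^\delta$, and preserves finite generation. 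Applying $\Lan_{\hat\G}$ to $F'\to G'\to Z'\to 0$ thus gives a finite presentation of $\Lan_{\hat\G}(Z')$ over $\I^\delta$. It remains to check $\Lan_{\hat\G}(Z')\cong Z$: the counit $\Lan_{\hat\G}(Z\circ\hat\G)\to Z$ is, at each object $(a,i)$, the internal map of $Z$ from $(\G(\G^{-1}(a)),i)$ to $(a,i)$, which along spine $0$ is the counit $\Lan_\G(M')\to M$ and along spine $1$ is the counit $\Lan_\G(N')\to N$, both isomorphisms by our choice of $\G$; hence the counit is a pointwise isomorphism and $Z$ is finitely presented. The main obstacle is the first step — producing a single grid function that is at once fine enough to resolve the presentations of $M$ and $N$ and $\delta$-periodic, so that the $\vec\delta$-shift underlying $\I^\delta$ descends to an honest $\vec m$-shift and $\hat\G$ genuinely lands in a discrete interleaving category; granted that, the Noetherian property of $\I^{\vec m}_\Z$ and the behaviour of $\Lan_{\hat\G}$ are routine adaptations of material already developed for grid functions.
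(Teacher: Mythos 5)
Your proof is correct, and it takes a genuinely different route from the paper's. The paper works entirely inside $\I^\delta$: it picks a surjection $\gamma\colon F\to Z$ from a finitely generated free $\I^\delta$-module (which exists because the two spine restrictions $M$, $N$ are finitely generated and every object of $\I^\delta$ lies on a spine), observes that $\ker(\gamma)\circ\mathcal J^i=\ker(\gamma\circ\mathcal J^i)$ is the kernel of a morphism of finitely presented $\R^n$-modules and hence is finitely generated by \cref{Lem:Fin_Gen_Ker}, and concludes that $\ker\gamma$ is finitely generated by the spines-cover-$\I^\delta$ argument again. You instead discretize $\I^\delta$ directly: you build a $\delta$-periodic grid function $\G$ so that the $\vec\delta$-shift descends to a $\vec m$-shift on $\Z^n$, prove Noetherianity of finitely generated $\I^{\vec m}_\Z$-modules from scratch (using the two-spine trick together with Noetherianity of $k[x_1,\dots,x_n]$), and transport the resulting finite presentation back along $\Lan_{\hat\G}$. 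Both arguments turn on the same structural facts --- that spine restrictions of a finitely generated interleaving module are finitely generated, and that a module with finitely generated spine restrictions is itself finitely generated --- but the paper packages the whole $\Z^n$ reduction inside \cref{Lem:Fin_Gen_Ker}, which makes its proof about five lines, while yours re-derives that reduction in the interleaving setting. What your approach buys is a self-contained Noetherian-type statement for the discretized interleaving category, plus the nice observation that $\delta$-periodicity can be built into a grid function (with $T=S+\delta\Z$ and $m$ equal to the number of residues of $S$ mod $\delta$) so that the interleaving shift itself is discretized; what it costs is the extra bookkeeping around $\hat\G$, the verification that $\{p\mid\hat\G(p)\le q\}$ has a maximum, and the counit computation, all of which the paper avoids by never leaving $\I^\delta$.
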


\begin{proof}[Proof]
Since $M$ and $N$ are finitely generated, $Z$ is also finitely generated, i.e., there exists an epimorphism $\gamma\colon F\to Z$, where $F$ is a finitely generated, free $\I^\delta$-persistence module.  For $i\in \{0,1\}$,  $F\circ \mathcal J^i$
 is free and finitely generated and thus finitely presented. Additionally, $Z\circ {\mathcal J}^i$ is isomorphic to either $M$ or $N$, so is also finitely presented.  \cref{Lem:Fin_Gen_Ker} thus implies that $\ker(\gamma\circ {\mathcal J}^i)$ is finitely presented, and hence finitely generated.  But \[\ker(\gamma\circ {\mathcal J}^i)=\ker(\gamma)\circ {\mathcal J}^i,\] so both $\ker(\gamma)\circ {\mathcal J}^0$ and $\ker(\gamma)\circ {\mathcal J}^1$ are finitely generated.  This implies that $\ker \gamma$ is finitely generated, giving the result.
\end{proof}

\begin{proof}[Proof of \cref{Prop:Presentation_Interleaving_Equality}]
To show that $\hat d^\infty_\mcI(M,N)=d_{\mcI}(M,N)$, it suffices to show that for any $\delta>0$, there exists a $\delta$-interleaving between $M$ and $N$ if and only if there exist finite presentations $P_M$ and $P_N$ of $M$ and $N$ with the same underlying matrix such that $d^\infty(P_M,P_N)\leq\delta$.  The proof that if we have such presentations then we get a $\delta$-interleaving is entirely straightforward; we omit the details.  To prove the converse, note that since $M$ and $N$ are finitely presented, \cref{Finitely_Presented_Interleaving} implies that any $\delta$-interleaving $Z\colon I^\delta\to\Vect$ between $M$ and $N$ is finitely presented.  The result now follows from \cref{Lem:Induced_Presentations_On_Spines}.
\end{proof}

\subsection{Equality of $d_{\mcI}^p$ and $d_{\W}^p$ on 1-Parameter Persistence Modules}
\label{Sec:dI=dP}

We next prove \cref{Thm:Main_Thm_Intro}\,(iv).  Let us recall the statement:

\begin{theorem1iv}
For $p\in [1,\infty]$ and finitely presented 1-parameter persistence modules $M$ and $N$, 
\[d_{\mcI}^p(M,N)=d_{\W}^p(M,N)=d_\match^p(M,N).\]
\end{theorem1iv}

As mentioned in the introduction, the second equality is immediate from the definition of $d_\match^p$ (\cref{Def:Wasserstein_Matching_Distance} below), so we only consider the first equality. We first show that $d_{\W}^p(M,N) \geq d_{\mcI}^p(M,N)$ (which, as in the $p=\infty$ case, is straightforward), and then show that $d_{\W}^p(M,N) \leq d_{\mcI}^p(M,N)$.  We use the notation and conventions of \cref{Sec:Wasserstein}.  

\begin{proof}[Proof that $d_{\W}^p(M,N) \geq  d_{\mcI}^p(M,N)$]
Suppose $d_{\W}^p(M,N)=\delta<\infty$.  Then there exists a matching $\sigma\colon\B{}_M\to \B{}_N$ with $\cost(\sigma,p)=\delta$.  Let $m$ and $n$ denote the respective sizes of $\B{}_M$ and $\B{}_N$, let $l$ denote the number of pairs matched by $\sigma$, and let $q=m+n-l$.  Write 
\[\B{}_M = \{a^1,\dots, a^m\}\quad\textup{and}\quad \B{}_N = \{b^1, \dots, b^l, b^{m+1}, \dots, b^{q}\}\] where $\sigma(a^i) = b^i$ for $1\leq i \leq l$, so that $a^{l+1},\dots, a^m$ and $b^{m+1}, \dots, b^{q}$ are unmatched by $\sigma$.   For $a^i$ unmatched let $b^i=m(a^i)$, and for $b^i$ unmatched let $a^i=m(b^i)$.  

We construct a presentation $P_M$ of $M$ as follows: Starting with the $q\times q$ identity matrix, for each $a^i=(a^i_1,a^i_2)\in\B{}_M$, we assign the label $a^i_1$ to row $i$ and the label $a^i_2$ to column $i$.  We then remove all of the columns with label $\infty$.  
It is clear that this does indeed give a presentation matrix for $M$.

We construct a presentation $P_N$ of $N$ in the same way, using the $b^i$ in place of the $a^i$. Note that the deleted columns are the same as for $M$, because the assumption that $\cost(\sigma,p)=\delta<\infty$ implies that $\sigma$ matches each infinite length interval to an infinite length interval.

For $p\in [1,\infty)$, we then have
\begin{align*}
\delta&=\cost(\sigma,p)\\
&=\left(\sum_{i=1}^l \|a^i-b^i\|_p^p+\sum_{i=l+1}^m \|a^i-m(a^i)\|_p^p+\sum_{i=m+1}^q \|m(b^i)-b^i\|_p^p\right)^{\frac{1}{p}}\\
&=\left(\sum_{i=1}^q \|a^i-b^i\|_p^p\right)^{\frac{1}{p}}\\
&=\left(\sum_{i=1}^q |a_1^i-b_1^i|^p+|a_2^i-b_2^i|^p \right)^{\frac{1}{p}}\\
&= \|\labels(P_M)-\labels(P_N)\|_p\\
&=d^p(P_M,P_N)\\
&\geq \hat d_{\mcI}^p(M,N)\\
&\geq d_{\mcI}^p(M,N).
\end{align*}
In the case that $p=\infty$, a similar sequence of equations shows that $\delta\geq d_{\mcI}^\infty(M,N).$
\end{proof}

We prepare for our proof that $d_{\W}^p(M,N) \leq d_{\mcI}^p(M,N)$ by recalling some standard facts about presentations of $\R$-persistence modules.  
Let $P$ be a presentation matrix for an $\R$-persistence module $Q$. Let $P_{i*}$ and $P_{*i}$ denote the $i^{\mathrm{th}}$ row and column of $P$, respectively, and let $\la(P)_{i*}$, $\la(P)_{*i}$ denote their labels.

If $\la(P)_{*i}\leq \la(P)_{*j}$, then adding a scalar multiple of $P_{*i}$ to $P_{*j}$ yields another presentation matrix for $Q$ (with the same labels); indeed, in close analogy with ordinary linear algebra, such a column addition can be seen as representing a change of basis operation.  Similarly, if $\la(P)_{i*}\leq \la(P)_{j*}$, then adding a scalar multiple of $P_{j*}$ to $P_{i*}$ yields another presentation matrix for $Q$.  We call such row and column operations \emph{admissible operations}.

\begin{proposition}[\cite{zomorodian2005computing}]\label{Prop:SNF}
Given a presentation matrix $P$ for an $\R$-persistence module $Q$,
\begin{itemize}
\item[(i)] There exists a sequence of admissible row and column operations which transforms $P$ into  a presentation matrix $R$ with at most one non-zero entry in each row and each column.
\item[(ii)]
$\B{}_Q$ can be immediately read off $R$ via the following formula:
\begin{equation}
\begin{aligned}\label{Eq:Barcode_From_Reduced_Pres}
\B{}_Q&=\{[\la(R)_{i*},\la(R)_{*j})\mid R_{i,j}\ne 0\textup{ and }\la(R)_{i*}\ne \la(R)_{*j}\} \\
       & \cup \{[\la(R)_{i*},\infty)\mid R_{i*}=0\}.
\end{aligned}      
\end{equation}
\end{itemize}

\end{proposition}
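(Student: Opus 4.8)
The plan is to derive this from the correctness of the standard persistence reduction algorithm, taking care that every operation used is admissible. Fix orderings of the rows and columns of $P$ so that the index is nondecreasing in the associated label, breaking ties arbitrarily; for a nonzero column $P_{*j}$ write $\mathrm{low}(j)$ for the largest row index $i$ with $P_{ij}\neq 0$, so that $\la(P)_{\mathrm{low}(j)*}=\max\{\la(P)_{i*}\mid P_{ij}\neq 0\}$. The one structural fact I will use repeatedly is that, since $P$ represents a morphism $F\to G$ of free modules, $P_{ij}\neq 0$ forces $\la(P)_{i*}\leq\la(P)_{*j}$: a basis element of $G$ contributes to the image of a basis element of $F$ only if its grade is no larger. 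Recall also, from the discussion preceding the proposition, that admissible operations represent changes of basis, so every matrix obtained along the way remains a presentation matrix for $Q$.

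For (i) I would reduce in two phases. \emph{Column reduction}: process the columns left to right; whenever the current column $P_{*j}$ is nonzero and some already-processed column $P_{*j'}$ (so $j'<j$, hence $\la(P)_{*j'}\leq\la(P)_{*j}$, so the column addition is admissible) satisfies $\mathrm{low}(j')=\mathrm{low}(j)$, add the appropriate scalar multiple of $P_{*j'}$ to $P_{*j}$ to kill its $\mathrm{low}(j)$-entry. Each step strictly decreases $\mathrm{low}(j)$ or zeroes the column, so this terminates, and afterward distinct nonzero columns have distinct pivots. \emph{Row cleanup}: list the nonzero columns as $j_1,\dots,j_k$ with $\mathrm{low}(j_1)>\dots>\mathrm{low}(j_k)$ and process them in that order. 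One checks inductively that when $j_t$ is reached, its pivot row $\mathrm{low}(j_t)$ has no nonzero entry outside column $j_t$: entries in columns $j_s$ with $s<t$ were already cleared, entries in columns $j_s$ with $s>t$ would violate the maximality defining $\mathrm{low}(j_s)$, and the remaining columns are zero. Then for each $i<\mathrm{low}(j_t)$ with $P_{i,j_t}\neq 0$ we have $\la(P)_{i*}\leq\la(P)_{\mathrm{low}(j_t)*}$, so adding a scalar multiple of row $\mathrm{low}(j_t)$ to row $i$ is admissible; since that row meets only column $j_t$, the operation clears $P_{i,j_t}$ and disturbs nothing else. After all columns are processed, every nonzero entry occupies a pivot position $(\mathrm{low}(j),j)$, so the resulting matrix $R$ has at most one nonzero entry in each row and each column. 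I expect the bookkeeping of this cleanup phase — tracking which rows stay ``clean'' and verifying admissibility of each clearing step — to be the main point requiring care; the column-reduction phase is exactly the usual algorithm.

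For (ii), with $R$ as above, reorder rows and columns so that the morphism $\gamma$ presented by $R$ is a direct sum of blocks of three types: for each pivot position, a map $Q^{\la(R)_{*j}}\xrightarrow{\,\cdot R_{ij}\,}Q^{\la(R)_{i*}}$; for each zero row $i$, the map $0\to Q^{\la(R)_{i*}}$; and for each zero column $j$, the map $Q^{\la(R)_{*j}}\to 0$. Since cokernel commutes with direct sums, $Q\cong\coker\gamma$ decomposes accordingly, and a direct computation — using $0\neq R_{ij}\in k$ and $\la(R)_{i*}\leq\la(R)_{*j}$ — gives $\coker(Q^b\xrightarrow{\,\cdot c\,}Q^a)\cong k^{[a,b)}$, which is the zero module when $a=b$, while $\coker(0\to Q^a)\cong k^{[a,\infty)}$ and $\coker(Q^b\to 0)=0$. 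Collecting the nonzero summands produces precisely the multiset on the right-hand side of \eqref{Eq:Barcode_From_Reduced_Pres}, and by uniqueness of the barcode in \cref{Thm:ordinary_Structure} this multiset is $\B{}_Q$.
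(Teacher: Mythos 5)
Your proof is correct. The paper states this proposition with a citation to Zomorodian--Carlsson and gives no proof of its own, so there is no in-paper argument to compare against; what you have written is a faithful reconstruction of the standard graded Smith normal form reduction. Your two-phase reduction (left-to-right column reduction to kill shared pivots, then pivot-row cleanup processed in decreasing pivot order) is exactly the classical algorithm, and the crucial observation you isolate at the outset --- that $P_{ij}\neq 0$ forces $\la(P)_{i*}\leq\la(P)_{*j}$ because the presented morphism is grade-preserving, which after sorting makes every reduction step admissible --- is precisely what is needed and easy to overlook. The inductive bookkeeping in the row-cleanup phase (that each pivot row is already ``clean'' when reached: entries to its left were cleared earlier, entries to its right would contradict the definition of $\mathrm{low}$) is carefully and correctly argued. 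Part (ii) then follows as you say from decomposing the presented morphism into $1\times1$ and rank-one blocks and computing cokernels, with uniqueness of the barcode (\cref{Thm:ordinary_Structure}) closing the argument. Two small presentational points: the initial reordering of rows and columns is a permutation of bases and hence harmless, but strictly speaking it is not one of the two ``admissible operations'' defined in the paper; you could avoid it by defining $\mathrm{low}(j)$ as the index of maximal label rather than maximal index, at the cost of slightly heavier notation. And it is worth flagging explicitly (as you implicitly do) that the invariant $\la_{i*}\leq\la_{*j}$ for nonzero $R_{ij}$ persists under admissible operations, since this is what guarantees $[\la(R)_{i*},\la(R)_{*j})$ is a well-formed half-open interval in the final formula.
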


One says that the presentation $R$ of \cref{Prop:SNF}\,(i) is in \emph{(graded Smith) normal form}.

\begin{definition}\label{Def:Order_Refines}
Given functions $f,g\colon S\to \R$, we will say that $g$ is $f$\emph{-compatible} if $f(x)\leq f(y)$ implies $g(x)\leq g(y)$ for all $x,y\in S$.
\end{definition}

Note that if $g$ is $f$-compatible, then $f(x)<f(y)$ implies $g(x)\leq g(y)$, and $f(x)=f(y)$ implies $g(x)=g(y)$.  

The following lemma is a straightforward consequence of the definitions.
\begin{lemma}\label{Lem:Refinements_And_Admissibility}
If $P$ and $P'$ are presentation matrices for $\R$-persistence modules such that $\labels(P')$ is $\labels(P)$-compatible, then a sequence of admissible row and column operations on $P$ is also admissible on $P'$.
\end{lemma}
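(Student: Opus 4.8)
The plan is to unwind the definitions and exploit the fact that admissibility of a row or column operation is a condition on label functions alone, together with the fact that these label functions are left unchanged by admissible operations. First I would record the elementary observation that performing an admissible operation on a presentation matrix does not alter its label function: adding a scalar multiple of one column to another is a change of basis within the same free module, so the column grades are unchanged, and symmetrically for rows. Hence, writing $P=P^{(0)},P^{(1)},\ldots,P^{(k)}$ for the matrices obtained by successively applying the given operations to $P$, we have $\labels(P^{(m)})=\labels(P)$ for every $m$; and applying the very same sequence of operations to $P'$ (which makes sense since the compatibility hypothesis forces $\labels(P)$ and $\labels(P')$ to share the domain $\{1,\ldots,r+c\}$, so $P$ and $P'$ have the same number of rows and columns) produces matrices all carrying the label function $\labels(P')$.

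Given this, it suffices to verify that each single operation in the sequence, admissible with respect to $\labels(P)$, is admissible with respect to $\labels(P')$. Say the operation adds a scalar multiple of column $i$ to column $j$; its admissibility on the relevant intermediate matrix descended from $P$ says $\la(P)_{*i}\leq \la(P)_{*j}$, i.e. $\labels(P)(r+i)\leq\labels(P)(r+j)$, where $r$ is the number of rows. Since $\labels(P')$ is $\labels(P)$-compatible, this gives $\labels(P')(r+i)\leq\labels(P')(r+j)$, i.e. $\la(P')_{*i}\leq\la(P')_{*j}$, which is exactly admissibility of this column operation on the corresponding intermediate matrix descended from $P'$. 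The argument for a row operation is identical, comparing the relevant indices in $\{1,\ldots,r\}$. Iterating over the operations in order completes the proof.

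I do not expect a genuine obstacle here; the statement really is a direct consequence of the definitions. The only point requiring a little care is the bookkeeping: admissibility of the $m$-th operation is measured against the labels of the $m$-th running matrix, so the label-invariance remark must be made first in order to reduce to a per-operation check, and one should note that the combined compatibility hypothesis on $\{1,\ldots,r+c\}$ restricts in particular to the only comparisons that occur (column label versus column label for column operations, row label versus row label for row operations).
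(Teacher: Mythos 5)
Your proof is correct and gives exactly the argument the paper intends; the paper itself omits the proof, stating only that the lemma ``is a straightforward consequence of the definitions,'' and the two observations you isolate (admissible operations preserve labels, and per-operation admissibility is a comparison of two labels of the same kind which transfers across compatibility) are precisely how one fills that in. One tiny parenthetical slip: sharing the domain $\{1,\ldots,r+c\}$ does not by itself force $P$ and $P'$ to have the same number of rows and the same number of columns (only that $r+c$ agrees); the equality of dimensions is an implicit hypothesis of the lemma, as in its sole application where all the matrices $P_t$ have the same underlying matrix.
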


We are now ready to finish the proof of \cref{Thm:Main_Thm_Intro} (iv).
\begin{proof}[Proof that $d_{\W}^p(M,N) \leq  d_{\mcI}^p(M,N)$]

According to \cref{Prop:Largest_Lower_Bound}, $d_{\mcI}^p$ is the largest distance less than or equal to $\hat{d}_\mcI^p$.  Thus, since $d_{\W}^p$ is a distance, it is enough to show that $d_{\W}^p(M,N)\leq \hat d_{\mcI}^p(M,N).$

Suppose that $P_M$ and $P_N$ are finite presentations of modules $M$ and $N$ with the same underlying $r\times c$ matrix. For $t\in [0,1]$, let $P_t$ be the presentation with the same underlying matrix as $P_M$ and $P_N$, and \[\labels(P_t) = (1-t)\labels(P_M) + t\cdot\labels(P_N).\] Note that $P_0=P_M$ and $P_1=P_N$.  Let $M_t$ be a module $P_t$ is presenting.  There exists a finite set of real numbers \[0=t_0<t_1<t_2<\dots<t_{w+1}=1\]
such that for $i\in \{0,\ldots, w\}$ and $s\in (t_i,t_{i+1})$, both $\labels(P_{t_i})$ and $\labels(P_{t_{i+1}})$ are $\labels(P_s)$-compatible.  Explicitly, we may take $\{t_1,\ldots,t_w\}$ to be the set of points $t\in (0,1)$ such that there exist $i,j\in \{1,\ldots,r+c\}$ and $t'\in [0,1]$ with
\[\labels(P_{t})_i=\labels(P_{t})_j\quad\textup{and}\quad\labels(P_{t'})_i\ne \labels(P_{t'})_j.\]  
Informally, this is the set of points where the order of the labels changes as $t$ increases.

By \cref{Prop:SNF}\,(i), there exists a sequence of admissible row and column operations putting $P_s$ into Smith normal form, and by \cref{Lem:Refinements_And_Admissibility} this sequence is also admissible for both $P_{t_i}$ and $P_{t_{i+1}}$.  In particular, this gives normal form presentations $R_0$ and $R_{1}$ for $M_{t_i}$ and $M_{t_{i+1}}$, respectively, with the same underlying matrix $R$ and 
\begin{equation}\label{Eq:Labels_are_Equal}
\labels(R_0)=\labels(P_{t_i}),\qquad \labels(R_{1})=\labels(P_{t_{i+1}}).
\end{equation}
By \cref{Prop:SNF}\,(ii), this pair of presentations induces a matching $\sigma_i\colon\B{}_{M_{t_i}}\to \B{}_{M_{t_{i+1}}}$.

We claim that \[\cost(\sigma_i,p) \leq \|\labels(R_0)-\labels(R_{1})\|_p.\]
To prove the claim, let $z\in \{0,1\}$.  
Note that by \cref{Prop:SNF}\,(ii), each $a=(a_1,a_2)\in \B{}_{M_{t_{i+z}}}$ with $a_2<\infty$ is indexed by a row-column pair $(r_a,c_a)$ of $R$ such that
\[a_1=\labels(R_z)_{r_a}\quad\textup{and}\quad a_2=\labels(R_{z})_{c_a},\]
where have simplified notation by letting $\labels(R_z)_{r_a}=\labels(R_z)_{r_a*}$ and $\labels(R_z)_{c_a}=\labels(R_z)_{*c_a}.$ 
Similarly, each $a=(a_1,\infty)\in \B{}_{M_{t_{i+z}}}$ is indexed by a zero row $r_a$ of $R$ such that \[a_1=\labels(R_z)_{r_a}.\]

It follows that for each matched pair $(a,b)\in \sigma_i$ with $a_2,b_2<\infty$,  
\begin{align*}
\|a-b\|_p^p&=|a_1-b_1|^p+|a_2-b_2|^p\\
&=|\labels(R_0)_{r_a}-\labels(R_1)_{r_a}|^p+|\labels(R_0)_{c_a}-\labels(R_1)_{c_a}|^p,
\end{align*}
and for each $(a,b)\in \sigma_i$ with $a_2=\infty=b_2$,  
\[\|a-b\|_p^p=|a_1-b_1|^p=|\labels(R_0)_{r_a}-\labels(R_1)_{r_a}|^p.\]
If $a\in \B{}_{M_{t_z}}$ is not matched by $\sigma_i$, then letting 
\[m'=\labels(R_{1-z})_{r_a}=\labels(R_{1-z})_{c_a},\] 
an easy calculation gives that
\begin{align*}
\|a-m(a)\|^p_p&\leq \|a-(m',m')\|_p^p\\
&=|\labels(R_0)_{r_a}-\labels(R_1)_{r_a}|^p+|\labels(R_0)_{c_a}-\labels(R_1)_{c_a}|^p.
\end{align*}

Now observe that for all $a,a'\in B{}_{M_{t_{i+z}}}$, we have $r_a=r_{a'}$ only if $a=a'$.  Similarly, $c_a=c_{a'}$ only if $a=a'$.  This implies the second equality below:
\begin{align*}
\cost(\sigma_i,p)^p &= \sum_{(a,b)\in \sigma_i} \|a-b\|_p^p+\sum_{a \textup{ unmatched by $\sigma_i$}} \|a-m(a)\|^p\\
&\leq  \sum_{(a,b)\in \sigma_i} |\labels(R_0)_{r_a}-\labels(R_1)_{r_a}|^p+ \sum_{\substack{(a,b)\in \sigma_i,\\ a_2,b_2<\infty}}  |\labels(R_0)_{c_a}-\labels(R_1)_{c_a}|^p\\
&+\sum_{a \textup{ unmatched by $\sigma_i$}} |\labels(R_0)_{r_a}-\labels(R_1)_{r_a}|^p+|\labels(R_0)_{c_a}-\labels(R_1)_{c_a}|^p\\
&\leq \sum_{j=1}^{r+c} |\labels(R_0)_j-\labels(R_1)_j|^p\\
&= \|\labels(R_0)-\labels(R_1)\|_p^p.
\end{align*}
The claim now follows by taking $p^{\mathrm{th}}$ roots.

In view of \cref{Eq:Labels_are_Equal}, we have \[\|\labels(R_0)-\labels(R_1)\|_p= \|\labels(P_{t_i})-\labels(P_{t_{i+1}})\|_p.\]  Thus, 
\begin{align*}
d_{\W}^p(M,N) &\leq \sum_{i=0}^w \cost(\sigma_i,p)\\
&\leq \sum_{i=0}^w \|\labels(P_{t_i})-\labels(P_{t_{i+1}})\|_p\\
&= \|\labels(P_M)-\labels(P_N)\|_p\\
&=d^p(P_M,P_N),\\
\end{align*}
where the second to last equality holds because the presentations $P_{t_i}$ were constructed via linear interpolation on the labels.  Since $(P_M,P_N)$ was an arbitrary pair in $P_{M,N}$, we are done.
\end{proof}

\section{The $p$-Matching Distance}
\label{Sec:W_matching}
As noted in the introduction, the definition of the $p$-matching distance is given simply by replacing $d^\infty_\W$ with $d_{\W}^p$ in the definition of the matching distance:

\begin{definition}\label{Def:Wasserstein_Matching_Distance}
For $p\in [0,\infty]$, the $p$-matching distance between \pfd $\R^n$-persistence modules $M$ and $N$ is given by 
\begin{align*}
d^p_\match(M,N) = \sup_{l}\, d_{\W}^p(M^l,N^l),
\end{align*}
where $l\colon \R\to \R^n$ ranges over all admissible lines (as defined in \cref{Sec:Matching_Distance}).
\end{definition}
Using the triangle inequality for $d_{\W}^p$, it is easy to check that $d_{\match}^p$ satisfies the triangle inequality, and hence is indeed a distance (i.e., an extended pseudometric). 
It is clear from the definition that \[d^\infty_\match(M,N)=d_\match(M,N).\]

\subsection{The $p$-Matching Distance Bounds the $p$-Presentation Distance}

We now prove \cref{Thm:Main_Thm_Intro}\,(iii), which extends Landi's inequality $d_{\match}\leq d_{\mcI}$ (\cref{Prop:Matching_Distance_Lower_Bound}).  Recall the statement:
\begin{theorem1iii}
For all finitely presented $\R^n$-persistence modules $M$, $N$, \[d_\match^p(M,N)\leq d_{\mcI}^p(M,N).\]
\end{theorem1iii}

Note that if $l:\R\to \R^n$ is an admissible line, then $\R^n$ pushes onto $\im l$, in the sense of \cref{Def:Push_Map}.  

\begin{lemma}\label{Lem:Push_Stability}
For any $a,b\in \R^n$, admissible line $l$, and $p\in [1,\infty]$, we have 
\[|\push^l(a)-\push^l(b)|\leq \|a-b\|_p.\] 
\end{lemma}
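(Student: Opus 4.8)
The plan is to reduce immediately to the case $p=\infty$ and then compute $\push^l$ explicitly. Since $\|v\|_p\geq\|v\|_\infty$ for every $v\in\R^n$ and every $p\in[1,\infty]$, it suffices to prove the sharper bound $|\push^l(a)-\push^l(b)|\leq\|a-b\|_\infty$; once this is established the stated inequality follows at once.

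Write $l(t)=t\vec v+\vec w$ with $\vec v=(v_1,\dots,v_n)$ and $\vec w=(w_1,\dots,w_n)$, where admissibility guarantees $v_i\geq 1$ for all $i$ (the smallest coordinate of $\vec v$ is $1$ and all coordinates are nonnegative). For $q\in\R^n$, the condition $q\leq l(t)$ is equivalent to $q_i\leq tv_i+w_i$ for all $i$, i.e.\ to $t\geq (q_i-w_i)/v_i$ for all $i$; hence the minimal such $t$ is
\[
\push^l(q)=\max_{1\leq i\leq n}\frac{q_i-w_i}{v_i}.
\]
(This also re-verifies, as remarked before the lemma, that $\R^n$ pushes onto $\im l$.) Next I would bound each coordinate term: for each $i$,
\[
\left|\frac{a_i-w_i}{v_i}-\frac{b_i-w_i}{v_i}\right|=\frac{|a_i-b_i|}{v_i}\leq|a_i-b_i|\leq\|a-b\|_\infty,
\]
using $v_i\geq 1$. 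Finally, the maximum of finitely many reals is $1$-Lipschitz with respect to the sup norm of the vector of arguments — concretely, if $|x_i-y_i|\leq\varepsilon$ for all $i$ then $|\max_i x_i-\max_i y_i|\leq\varepsilon$ — so applying this with $x_i=(a_i-w_i)/v_i$, $y_i=(b_i-w_i)/v_i$ and $\varepsilon=\|a-b\|_\infty$ gives $|\push^l(a)-\push^l(b)|\leq\|a-b\|_\infty\leq\|a-b\|_p$.

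There is essentially no hard step here: the only points requiring care are writing down the explicit formula for $\push^l$ and invoking the admissibility hypothesis $v_i\geq 1$, without which the coordinatewise Lipschitz bound could fail. The $\ell^p$-norm in the statement is a red herring, absorbed by the trivial reduction to $p=\infty$.
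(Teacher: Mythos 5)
Your proof is correct, and it reaches the same destination as the paper's by a slightly more explicit route. Both proofs make the same two reductions: drop to $p=\infty$ via $\|\cdot\|_\infty\leq\|\cdot\|_p$, and exploit the admissibility hypothesis $v_i\geq 1$. Where they differ is in how $\push^l$ is handled. The paper never writes down a formula for $\push^l$; it works purely from the minimality property, observing that $a\leq b+\vec\delta\leq l(\push^l(b))+\delta\vec v=l(\push^l(b)+\delta)$ (using $\vec 1\leq\vec v$), so that $\push^l(a)\leq\push^l(b)+\delta$, and then symmetrizes. You instead derive the closed form $\push^l(q)=\max_i(q_i-w_i)/v_i$ (which is correct, and in passing re-verifies that $\R^n$ pushes onto $\im l$) and then combine a coordinatewise bound with the $1$-Lipschitzness of $\max$ in the sup norm. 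The two arguments are the same length and the same difficulty; yours has the small pedagogical advantage that the explicit formula makes it transparent exactly where admissibility enters (the factor $1/v_i\leq 1$), while the paper's version is marginally more "basis-free" and matches the general push framework of \cref{Def:Push_Map} more directly. Either would serve.
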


\begin{proof}
Since $\|a-b\|_\infty<\|a-b\|_p$, it suffices to prove the result in the case $p=\infty$.  Let $l$ be given by $l(t)=t\vec v+\vec w$, and let $\delta=\|a-b\|_\infty$.  Recalling our notation 
$\vec \delta=(\delta,\delta,\ldots,\delta)\in \R^n$ from \cref{Sec:Interleavings}, 
note that $a-b \leq \vec \delta$. Since $\vec 1\leq \vec v$, we have
\begin{align*}
a&=b+(a-b)\\
&\leq b+\vec\delta\\
&\leq  l\circ \push^l(b)+\vec\delta\\
&\leq l\circ \push^l(b)+\delta\vec v.
\end{align*}
Thus, $l\circ \push^l(b)+\delta\vec v$ is a point on $\im l$ that is greater than or equal to $a$, so
\[l\circ \push^l(a)\leq l\circ \push^l(b)+\delta \vec v.\]  The same argument with $a$ and $b$ switched shows that
\[l\circ \push^l(b)\leq l\circ \push^l(a)+\delta \vec v.\] It then follows from the definitions of $l$ that
\begin{align*}
\push^l(a)&\leq \push^l(b)+\delta,\\
\push^l(b)&\leq \push^l(a)+\delta.
\end{align*}
This gives $|\push^l(a)-\push^l(b)|\leq \delta$, as desired.
\end{proof}

\begin{proof}[Proof of \cref{Thm:Main_Thm_Intro}\,(iii)]
Recall from \cref{Prop:Largest_Lower_Bound} that $d_\mcI^p$ is the largest distance bounded above by $\hat d_\mcI^p$. Since $d_\match^p$ is also a distance, it suffices to show that  $d_\match^p(M,N) \leq \hat d_\mcI^p(M,N)$, i.e., that for any admissible line $l$,
\begin{equation}\label{Eq:Lower_Bound}
d_{\W}^p(M^l,N^l) \leq \hat d_{\mcI}^p(M,N).
\end{equation}

By \cref{Prop:Pushes}, any pair of presentation matrices $P_M$ and $P_N$ for $M$ and $N$ induces presentation matrices $P_{M^l},P_{N^l}$ for $M^l$ and $N^l$, with 
\[\labels(P_{M^l})=\push^l\circ \labels(P_{M})\quad\textup{and}\quad\labels(P_{N^l})=\push^l\circ \labels(P_{N}).\]
\cref{Lem:Push_Stability} implies that $d^p(P_{M^l},P_{N^l})\leq d^p(P_M,P_N)$.  Hence, $\hat d_{\mcI}^p(M^l,N^l)\leq \hat d_{\mcI}^p(M,N)$.  Since $M^l$ and $N^l$ are $1$-parameter persistence modules, \cref{Thm:Main_Thm_Intro}\,(iv) gives us that \[d_{\W}^p(M^l,N^l)=d_{\mcI}^p(M^l,N^l)=\hat d_{\mcI}^p(M^l,N^l),\] completing the proof.
\end{proof}

\subsection{Computing the $p$-Matching Distance on Bipersistence Modules}\label{Sec:Computation}
Assume we are given presentations $P_M$ and $P_N$ for $\R^2$-persistence modules $M$ and $N$, where the total number of generators and relations in both presentations is $m$.  By translating  both modules if necessary, we can assume without loss of generality that all grades of generators and relations for both modules lie in some bounded rectangle $[0,X]\times [0,Y]$.  Let $C=\max\,\{X,Y\}$.  

An algorithm of Biasotti et al. \cite{biasotti2011new} computes an estimate $D$ of $d_{\match}(M,N)$ satisfying \[d_{\match}(M,N)-\epsilon\leq D \leq d_{\match}(M,N)\] in $O\left(\frac{(mC)^3}{\epsilon^2}\right)$ time.  In brief, the idea is to take \[D=\max_{l\in L}\ d^\infty_{\W}(M^l,N^l),\] where $L$ is a suitably chosen finite set of admissible lines; each of the bottleneck distances $d^\infty_{\W}(M^l,N^l)$ can be computed efficiently using a standard algorithm, e.g., the one of \cite{kerber2017geometry}.  
$L$ is in fact chosen adaptively, using a quad tree construction.  The guarantee that $d_{\match}(M,N)-\epsilon\leq D$ follows from the algebraic stability theorem.  

More recent work of Kerber and Nigmetov \cite{kerber2020efficient} revisits these ideas, introducing several optimizations to the approach of  \cite{biasotti2011new}.  This leads  to a more efficient algorithm, with time complexity $O\left(m^3\left(\frac{C}{\epsilon}\right)^2\right)$.  It turns out that, using \cref{Thm:Main_Thm_Intro}\,(iv), the algorithm of \cite{kerber2020efficient} extends quite readily to an approximation algorithm for $d^p_{\match}(M,N)$, for any $p\in [1,\infty]$, yielding the following result:

\begin{proposition}[Efficient Approximation of the $p$-Matching Distance]\label{Prop:MAtching_Distance_Approx}
For any $p\in [1,\infty]$ and $\epsilon>0$, an approximation $D$ of $d^p_{\match}(M,N)$, satisfying
\[d^p_{\match}(M,N)-\epsilon\leq D \leq d^p_{\match}(M,N),\] can be computed in time \[O\left(m^{3+\frac{2}{p}}\left(\frac{C}{\epsilon}\right)^2\right).\]
\end{proposition}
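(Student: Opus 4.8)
The plan is to run the algorithm of Kerber and Nigmetov \cite{kerber2020efficient} essentially verbatim, with the bottleneck distance $d^\infty_{\W}$ on restricted barcodes replaced by $d^p_{\W}$, and with the resolution of the line search refined by a factor of $\Theta(m^{1/p})$. Correctness of the modified algorithm will follow from \cref{Thm:Main_Thm_Intro}\,(iv) in place of the algebraic stability theorem, which is what underpins the correctness proof in \cite{kerber2020efficient} for the case $p=\infty$. (The case $p=\infty$ of the proposition is of course just the result of \cite{kerber2020efficient}, since $m^{3+2/\infty}=m^3$.)

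In more detail: after the standard reduction \cite{biasotti2011new,kerber2020efficient} to a bounded two-parameter family of admissible lines, the algorithm maintains an adaptive quad tree over line-parameter space; at each examined line $l$ it forms, via \cref{Prop:Pushes}, presentation matrices $P_{M^l}$ and $P_{N^l}$ for $M^l$ and $N^l$ whose labels are $\push^l\circ\labels(P_M)$ and $\push^l\circ\labels(P_N)$, reads off the barcodes $\B{}_{M^l}$ and $\B{}_{N^l}$ using \cref{Prop:SNF}, and computes $d^p_{\W}(M^l,N^l)$ \emph{exactly} by solving a min-cost bipartite matching problem with edge costs $\|a-b\|_p^p$ (for $p<\infty$ a standard $O(m^3)$ assignment algorithm; for $p=\infty$ bottleneck matching). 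The output is $D=\max_{l\in L}d^p_{\W}(M^l,N^l)$ over the finite set $L$ of examined lines. The bound $D\le d^p_\match(M,N)$ is immediate from \cref{Def:Wasserstein_Matching_Distance} together with exactness of the subroutine.

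For the lower bound $D\ge d^p_\match(M,N)-\epsilon$, the key estimate is that $l\mapsto d^p_{\W}(M^l,N^l)$ varies slowly within a quad-tree cell. Fix an admissible line $l^*$ and a line $l\in L$ in the same cell. As in \cite{biasotti2011new,kerber2020efficient}, the map sending a line parameter to $\push^{(\cdot)}(a)$ is Lipschitz with a constant independent of $m$; since $P_M$ and $P_N$ together carry at most $m$ labels, aggregating over the label set in the $\ell^p$-norm gives $d^p(P_{M^l},P_{M^{l^*}})\le m^{1/p}\,\kappa\,\mathrm{dist}(l,l^*)$ and likewise for $N$, for a suitable $\kappa$. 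Because $P_{M^l}$ and $P_{M^{l^*}}$ share an underlying matrix, \cref{Thm:Main_Thm_Intro}\,(iv) gives $d^p_{\W}(M^l,M^{l^*})=d^p_{\mcI}(M^l,M^{l^*})\le\hat d^p_{\mcI}(M^l,M^{l^*})\le d^p(P_{M^l},P_{M^{l^*}})$, and similarly for $N$; the triangle inequality for $d^p_{\W}$ then yields
\[
\bigl|\,d^p_{\W}(M^l,N^l)-d^p_{\W}(M^{l^*},N^{l^*})\,\bigr|\le 2m^{1/p}\,\kappa\,\mathrm{dist}(l,l^*).
\]
Thus, running the quad tree of \cite{kerber2020efficient} with its tolerance parameter decreased by a factor $\Theta(m^{1/p})$ guarantees $D\ge d^p_\match(M,N)-\epsilon$ by the same argument as in the $p=\infty$ case. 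Decreasing the tolerance by $\Theta(m^{1/p})$ multiplies the number of leaves by $\Theta(m^{2/p})$, so $|L|=O\bigl(m^{2/p}(C/\epsilon)^2\bigr)$; processing each examined line costs $O(m^3)$ (the reduction of \cref{Prop:SNF} plus the $\ell^p$-Wasserstein subroutine on barcodes of size $O(m)$), and multiplying gives the claimed $O\bigl(m^{3+2/p}(C/\epsilon)^2\bigr)$ bound.

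The main obstacle is this propagation step: one must reroute the correctness analysis of \cite{kerber2020efficient} — which for $p=\infty$ passes through algebraic stability — through \cref{Thm:Main_Thm_Intro}\,(iv) instead, while carefully tracking the extra $m^{1/p}$ factor that enters precisely because the $\ell^p$-norm aggregates over all $O(m)$ labels rather than taking a maximum. Everything else is bookkeeping: checking that the reduction to a bounded family of lines and the quad-tree machinery of \cite{biasotti2011new,kerber2020efficient} carry over unchanged, and that exact $\ell^p$-Wasserstein distance between barcodes of size $O(m)$ is computable in $O(m^3)$ time.
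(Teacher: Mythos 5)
Your proposal is correct and follows essentially the same approach as the paper: replace bottleneck with $p$-Wasserstein in the Kerber--Nigmetov subroutine, reroute the correctness argument through $d^p_{\W}\le d^p_{\mcI}$ from \cref{Thm:Main_Thm_Intro}\,(iv) instead of algebraic stability, and track the extra $m^{1/p}$ factor arising from $\ell^p$-aggregation over the $O(m)$ labels, which squares to $m^{2/p}$ in the 2-dimensional line-parameter search. The only cosmetic difference is that the paper phrases the Lipschitz/propagation step by redefining the ``local linear bound'' quantity of \cite{kerber2020efficient} as $v_p(M,B)=\|(v(a_1,B),\ldots,v(a_z,B))\|_p$ and using $v_p(M,B)\le m^{1/p}v(M,B)$, whereas you state the same estimate directly as a Lipschitz bound on $l\mapsto d^p_{\W}(M^l,N^l)$; these are equivalent.
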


\begin{proof}
We briefly outline the modifications to the approach of \cite{kerber2020efficient} which lead to this result.  First, let us remark that while the exposition of \cite{kerber2020efficient} assumes that the input to the algorithm is a pair of bifiltrations, the authors note in the conclusion that their algorithm in fact applies to presentations in exactly the same way.  

In \cite{kerber2020efficient}, Kerber and Nigmetov in fact introduce and compare a few variants of their algorithm; for simplicity, we focus here on the variant which uses the so-called \emph{local linear bound.}
In this variant, at each rectangle $B$ constructed in the quad-tree, quantities $v(M,B)$ and $v(N,B)$ are computed; we now define these quantities.  Elements of $B$ parameterize admissible lines; to keep notation simple, let us in fact identify each element of $B$ with the line it parameterizes, and let $l_c\in B$ denote the center of $B$.  Then, adapting the notation of  \cite{kerber2020efficient} to the notation for presentations used in our paper, the definition is as follows:
\[v(M,B)=\max_{a\in \labels(P_M)} v(a,B),\]
where 
\[v(a,B)=\max_{l\in B} |\push^l(a)-\push^{l_c}(a)|.\] 
We define $v(N,B)$ analogously. 

To extend the algorithm of \cite{kerber2020efficient}, just two simple changes are required: First, wherever a bottleneck distance is computed in the original algorithm, we instead compute a $p$-Wasserstein distance.  Second, in the specification of the algorithm, we replace $v(M,B)$ and $v(N,B)$ with quantities $v_p(M,B)$ and $v_p(N,B)$, defined in the following way: Regarding $\labels(P_M)$ as a vector $(a_1,\ldots,a_z)\in (\R^2)^z$, we let \[v_p(M,B)=\|v(a_1,B),v(a_2,B),\ldots,v(a_z,B))\|_p.\]  $v_p(N,B)$ is defined in the same way.  Note that $v_\infty(M,B)=v(M,B)$, so in the $p=\infty$ case, this modified algorithm is indeed the same as the one in \cite{kerber2020efficient}. 

The proof of correctness of the algorithm given in \cite{kerber2020efficient} extends to the case of arbitrary $p$ with only one substantive change: Where  \cite{kerber2020efficient} (implicitly) invokes the algebraic stability theorem, we need to instead apply the more general bound $d_\W^p\leq d^p_{\mcI}$ implied by \cref{Thm:Main_Thm_Intro}\,(iv).  
Using the fact that \[v_p(M,B)\leq z^\frac{1}{p}v(M,B)\leq m^\frac{1}{p}v(M,B),\] the complexity analysis of \cite{kerber2020efficient} also extends readily to give our claimed runtime bound.
\end{proof}

\section{Universality of the Presentation Distance}
\label{Sec:Universality}
In this section, we prove \cref{Thm:Universality}, our stability and universality result for the presentation distance on 1- and 2-parameter persistence modules.  We also show that the constant  $n^\frac{1}{p}$ appearing in the statement of \cref{Thm:Universality}\,(i) is tight.  

Let us recall the statement of the theorem, keeping in mind our convention $\frac{1}{\infty} = 0$:

\begin{reptheorem}{Thm:Universality}
For any $p\in [1,\infty]$ and $n\in \{1,2\}$,
\begin{itemize}
\item[(i)] $d_{\mcI}^p$ is $p$-stable, and also $p$-stable across degrees with constant $n^\frac{1}{p}$, on finitely presented $n$-parameter persistence modules,
\item[(ii)] if the field of coefficients $k$ is prime and $d$ is any $p$-stable distance on finitely presented $n$-parameter persistence modules, then $d\leq d_{\mcI}^p$.
\end{itemize}
\end{reptheorem}

Most of our effort will go into proving \cref{Thm:Universality}\,(i).  

\subsection{Proof of Stability}\label{Sec:Proof_of_Stability}
Our proof of \cref{Thm:Universality}\,(i) will focus primarily on the $n=2$ case, as a slight variant of our argument for this case also handles the $n=1$ case; we discuss this variant at the end of the proof.  As noted in the introduction, the $n=1$ case also follows immediately from Skraba and Turner's result \cref{Thm:Skraba_Turner_Stability} and the inequality $d_{\mcI}^p \leq d^p_{\W}$ of \cref{Thm:Main_Thm_Intro}\,(iv).  

We will need the following notation: 

\begin{definition}[Chain Complex of a Monotone Function]\label{Def:Chain_Complex}
For $X$ a finite CW-complex, let $X^j$ denote the set of $j$-cells of $X$.  For any $n\geq 0$, a monotone function $f\colon\Cells(X)\to \R^n$ has an associated chain complex of free $n$-parameter persistence modules 
\[\C(f)=\quad \cdots \xrightarrow{\partial^f_{j+1}} \C_j(f)\xrightarrow{\partial^f_{j}} \C_{j-1} (f)\xrightarrow{\partial^f_{j-1}} \cdots \xrightarrow{\partial^f_2} \C_1(f) \xrightarrow{\partial^f_{1}} \C_0(f),\]
where 
\[\C_j(f)=\bigoplus_{\sigma\in {X^j}} Q^{f(\sigma)},\]
and each boundary map $\delta^f_j$ is induced by the $j^{\mathrm{th}}$ cellular boundary map of $X$.  
\end{definition}
We note that for all $j\geq 0$, we have \[H_j(\C(f))=H_j\circ \mathcal S(f);\]
that is, the $j^{\mathrm{th}}$ homology module of the chain complex is equal to the composition of the sublevel filtration with the $j^{\mathrm{th}}$ cellular homology functor. 

We begin with a brief outline of the proof of  \cref{Thm:Universality}\,(i).  Given functions $f,g:\Cells(X)\to \R^2$, each module $\ker \partial^f_j$ and $\ker \partial^g_j$ is free by a result in \cite{chacholski2017combinatorial}.  Using an interpolation argument similar to the one used in the proofs of 
 \cref{Thm:Skraba_Turner_Stability} from \cite{skraba2020wasserstein} and \cref{Thm:Main_Thm_Intro}\,(iv), we observe that it suffices to consider the case where the sublevel filtration  $\mathcal S(g)$ is a left Kan extension of  $\mathcal S(f)$ along a grid function $\G\colon \R^2\to \R^2$.  In this case, a basis of $\ker \partial^f_j$ induces a corresponding  basis of $\ker \partial^g_j$.  This  gives us presentations for $H_j\mathcal S(f)$ and $H_j\mathcal S(g)$ with the same underlying matrix.  Using a Gr\"obner basis argument, we show that for each $j$-cell $\sigma$ of $X$, changing the $x$-coordinate of $f(\sigma)$ to that of $g(\sigma)$ induces a corresponding change in the $x$-label of at most one row in the presentation.  Moreover, distinct $j$-cells induce changes to distinct labels.  The analogous statements with $y$ replacing $x$ are also true.  Given this, the result follows readily.
 
Before proceeding with the details of the proof, we give an example which illustrates the result and illuminates parts of the proof.  We will also use the same example in \cref{Prop:Tightness} to show that the constant $n^{\frac{1}{p}}$ appearing in the statement of the proof is tight.  

\begin{example}\label{Ex:Stability}
Let $X$ be a cell complex with two $0$-cells and three $1$-cells, and assume that each $1$-cell is attached to both $0$-cells, as illustrated in \cref{Fig:stability_filtration}\,(A).  Consider a function $f\colon\Cells(X)\to \R^2$ where the $1$-cells map to $(1,4)$, $(3,3)$ and $(4,1)$, and both $0$-cells map to $(0,0)$.  Let $\sigma$ denote the 1-cell mapping to $(3,3)$, and let $g\colon\Cells(X)\to \R^2$ be the function which is identical to $f$ except that $g(\sigma)=(2,2)$.  Both $f$ and $g$ are monotone.  The two functions are shown in \cref{Fig:stability_filtration}\,(A).

\cref{Fig:stability_filtration}\,(B) depicts $H_1\mathcal{S}(f)$ and $H_0\mathcal{S}(f)$.  These are the kernel and cokernel, respectively, of the boundary map $\partial_1^f\colon \C_1(f) \to \C_0(f)$. We see that 
$H_1\mathcal{S}(f)$ is a free module on two generators 
at $(3,4)$ and $(4,3)$, and that $H_0\mathcal{S}(f)$ is a direct sum of two indecomposables, one a free module on a single generator at $(0,0)$, and the other a module with a generator at $(0,0)$ and relations at $(1,4)$, $(3,3)$ and $(4,1)$.  Thus, the following is a presentation of $H_0\mathcal{S}(f)$:
\[
P^f=\begin{blockarray}{cccc}
  & (1,4)& (3,3) & (4,1)  \\
\begin{block}{c[ccc]}
(0,0) &0 &  0 & 0  \\
(0,0) &1 &  1 & 1  \\
   \end{block}
   \end{blockarray}.
   \]
Similarly, \cref{Fig:stability_filtration}\,(C) depicts $H_1\mathcal{S}(g)$ and $H_0\mathcal{S}(g)$.  We see that $H_1\mathcal{S}(g)$ is a free module on two generators 
at $(2,4)$ and $(4,2)$, and that $H_0\mathcal{S}(g)$ is a direct sum of two indecomposables, one a free module on a single generator at $(0,0)$, and the other a module with a generator at $(0,0)$ and relations at $(1,4)$, $(2,2)$ and $(4,1)$.  Thus, the following is a presentation of $H_0\mathcal{S}(g)$:
\[
P^g=\begin{blockarray}{ccccc}
  & (1,4)& (2,2) & (4,1)  \\
\begin{block}{c[cccc]}
(0,0) &0 &  0 & 0  \\
(0,0) &1 &  1 & 1  \\
   \end{block}
   \end{blockarray}.
   \]
We see that for $p\in [1,\infty]$, $d^p(P^f,P^g)=(1^p+1^p)^{\frac{1}{p}}=2^{\frac{1}{p}}$.  Hence, \[d_{\mcI}^p(H_0\mathcal{S}(f),H_0\mathcal{S}(g))\leq 2^{\frac{1}{p}}=\|f-g\|_p,\] as guaranteed by \cref{Thm:Universality}\,(i).  Similarly, we see that \[d_{\mcI}^p(H_1\mathcal{S}(f),H_1\mathcal{S}(g))\leq 2^{\frac{1}{p}}=\|f-g\|_p,\]
and that \[\|d_{\mcI}^p(H_{*}\mathcal{S}(f),H_{*}\mathcal{S}(g))\|_p\leq \|(2^\frac{1}{p},2^\frac{1}{p})\|_p= 4^{\frac{1}{p}}=2^{\frac{1}{p}} \|f-g\|_p,\]
as also guaranteed by the theorem.  In the proof of \cref{Prop:Tightness}, we will show that each of the inequalities above is in fact an equality.
 
We see in this example that changing the value of $f(\sigma)$ from $(3,3)$ to $(2,2)$ changes the grades of two generators of $H_1\mathcal S(f)$:  The $x$-coordinate of one generator and $y$-coordinate of the other generator both change from 3 to 2.  In addition, the grade of one relation of $H_0\mathcal S(f)$ changes from $(3,3)$ to $(2,2)$.
\end{example}

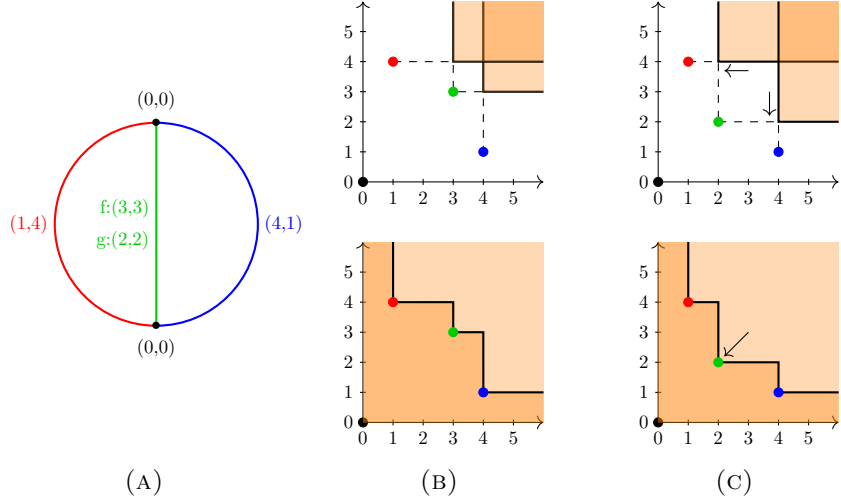
\begin{figure}[t!]
    \centering
\begin{subfigure}[t]{0.3\textwidth}
    \centering
\begin{tikzpicture}[scale=.45,every node/.style={scale=.65}]
\draw[thick,color=red] (0,0) to [out=180,in=270] (-3,3) to [out=90,in=180] (0,6);
\draw[thick,color=green!80!black] (0,0) to (0,6);
\draw[thick,color=blue] (0,0) to [out=0,in=270] (3,3) to [out=90,in=0] (0,6);
\node at (0,0){$\bullet$};
\node at (0,6){$\bullet$};
\node at (0,-3.5){};
\node at (0,12){};
\node[left,color=red] at (-3,3){(1,4)};
\node[right,color=blue] at (3,3){(4,1)};
\node[above left,color=green!80!black] at (0,3){f:(3,3)};
\node[below left,green,color=green!80!black] at (0,3){g:(2,2)};
\node[above] at (0,6.2){(0,0)};
\node[below] at (0,-.2){(0,0)};
\end{tikzpicture}
\caption{}
\end{subfigure}
\begin{subfigure}[t]{0.3\textwidth}
    \centering
\begin{tikzpicture}[scale=.4,every node/.style={scale=.65}]
\begin{scope}
\draw[<->] (0,6) to (0,0) to (6,0);
\draw[dashed] (1,4) to (3,4) to (3,3) to (4,3) to (4,1);

\draw (-.1,0) to (.1,0);
\node[left] at (-.1,0){$0$};
\draw (-.1,1) to (.1,1);
\node[left] at (-.1,1){$1$};
\draw (-.1,2) to (.1,2);
\node[left] at (-.1,2){$2$};
\draw (-.1,3) to (.1,3);
\node[left] at (-.1,3){$3$};
\draw (-.1,4) to (.1,4);
\node[left] at (-.1,4){$4$};
\draw (-.1,5) to (.1,5);
\node[left] at (-.1,5){$5$};

\draw (0,-.1) to (0,.1);
\node[below] at (0,-.1){$0$};
\draw (1,-.1) to (1,.1);
\node[below] at (1,-.1){$1$};
\draw (2,-.1) to (2,.1);
\node[below] at (2,-.1){$2$};
\draw (3,-.1) to (3,.1);
\node[below] at (3,-.1){$3$};
\draw (4,-.1) to (4,.1);
\node[below] at (4,-.1){$4$};
\draw (5,-.1) to (5,.1);
\node[below] at (5,-.1){$5$};
\draw[fill=black] (0,0) circle (.15);
\draw[thick] (6,3) to (4,3) to (4,6);
\fill[orange, opacity=0.3]
(4,3) to (4,6) to (6,6) to (6,3) to (4,3);
\draw[thick] (3,6) to (3,4) to (6,4);
\fill[orange, opacity=0.3]
(3,4) to (3,6) to (6,6) to (6,4) to (3,4);
\draw[color=red,fill=red] (1,4) circle (.15);
\draw[color=blue,fill=blue] (4,1) circle (.15);
\draw[color=green!80!black,fill=green!80!black] (3,3) circle (.15);
\end{scope}
\begin{scope}[yshift=-8cm]
\draw[<->] (0,6) to (0,0) to (6,0);
\draw[dashed] (2,4) to (3,4) to (3,3) to (4,3) to (4,2);

\draw (-.1,0) to (.1,0);
\node[left] at (-.1,0){$0$};
\draw (-.1,1) to (.1,1);
\node[left] at (-.1,1){$1$};
\draw (-.1,2) to (.1,2);
\node[left] at (-.1,2){$2$};
\draw (-.1,3) to (.1,3);
\node[left] at (-.1,3){$3$};
\draw (-.1,4) to (.1,4);
\node[left] at (-.1,4){$4$};
\draw (-.1,5) to (.1,5);
\node[left] at (-.1,5){$5$};

\draw (0,-.1) to (0,.1);
\node[below] at (0,-.1){$0$};
\draw (1,-.1) to (1,.1);
\node[below] at (1,-.1){$1$};
\draw (2,-.1) to (2,.1);
\node[below] at (2,-.1){$2$};
\draw (3,-.1) to (3,.1);
\node[below] at (3,-.1){$3$};
\draw (4,-.1) to (4,.1);
\node[below] at (4,-.1){$4$};
\draw (5,-.1) to (5,.1);
\node[below] at (5,-.1){$5$};
\draw[fill=black] (0,0) circle (.15);
\fill[orange, opacity=0.3]
(0,6) to (0,0) to (6,0) to (6,6) to (0,6);
\fill[orange, opacity=0.3]
(0,6) to (1,6) to (1,4) to (3,4) to (3,3) to (4,3) to (4,1) to (6,1) to (6,0) to (0,0) to (0,6);
\draw[thick] (1,6) to (1,4) to (3,4) to (3,3) to (4,3) to (4,1) to (6,1);
\draw[color=red,fill=red] (1,4) circle (.15);
\draw[color=blue,fill=blue] (4,1) circle (.15);
\draw[color=green!80!black,fill=green!80!black] (3,3) circle (.15);
\end{scope}
\end{tikzpicture}
\caption{}
\end{subfigure}
\begin{subfigure}[t]{0.3\textwidth}
    \centering
\begin{tikzpicture}[scale=.4,every node/.style={scale=.65}]
\begin{scope}
\draw[<->] (0,6) to (0,0) to (6,0);
\draw[dashed] (1,4) to (2,4) to (2,2);
\draw[dashed] (2,2) to (4,2) to (4,1);
\draw (-.1,0) to (.1,0);
\node[left] at (-.1,0){$0$};
\draw (-.1,1) to (.1,1);
\node[left] at (-.1,1){$1$};
\draw (-.1,2) to (.1,2);
\node[left] at (-.1,2){$2$};
\draw (-.1,3) to (.1,3);
\node[left] at (-.1,3){$3$};
\draw (-.1,4) to (.1,4);
\node[left] at (-.1,4){$4$};
\draw (-.1,5) to (.1,5);
\node[left] at (-.1,5){$5$};

\draw (0,-.1) to (0,.1);
\node[below] at (0,-.1){$0$};
\draw (1,-.1) to (1,.1);
\node[below] at (1,-.1){$1$};
\draw (2,-.1) to (2,.1);
\node[below] at (2,-.1){$2$};
\draw (3,-.1) to (3,.1);
\node[below] at (3,-.1){$3$};
\draw (4,-.1) to (4,.1);
\node[below] at (4,-.1){$4$};
\draw (5,-.1) to (5,.1);
\node[below] at (5,-.1){$5$};
\draw[fill=black] (0,0) circle (.15);
\fill[orange, opacity=0.3]
(6,4) to (2,4) to (2,6) to (6,6) to (6,4);
\draw[thick] (6,4) to (2,4) to (2,6);
\fill[orange, opacity=0.3]
(6,2) to (4,2) to (4,6) to (6,6) to (6,2);
\draw[thick] (6,2) to (4,2) to (4,6);
\draw[->] (3,3.7) to (2.2,3.7);
\draw[->] (3.7,3) to (3.7,2.2);
\draw[color=red,fill=red] (1,4) circle (.15);
\draw[color=blue,fill=blue] (4,1) circle (.15);
\draw[color=green!80!black,fill=green!80!black] (2,2) circle (.15);
\end{scope}
\begin{scope}[yshift=-8cm]
\draw[<->] (0,6) to (0,0) to (6,0);

\draw (-.1,0) to (.1,0);
\node[left] at (-.1,0){$0$};
\draw (-.1,1) to (.1,1);
\node[left] at (-.1,1){$1$};
\draw (-.1,2) to (.1,2);
\node[left] at (-.1,2){$2$};
\draw (-.1,3) to (.1,3);
\node[left] at (-.1,3){$3$};
\draw (-.1,4) to (.1,4);
\node[left] at (-.1,4){$4$};
\draw (-.1,5) to (.1,5);
\node[left] at (-.1,5){$5$};

\draw (0,-.1) to (0,.1);
\node[below] at (0,-.1){$0$};
\draw (1,-.1) to (1,.1);
\node[below] at (1,-.1){$1$};
\draw (2,-.1) to (2,.1);
\node[below] at (2,-.1){$2$};
\draw (3,-.1) to (3,.1);
\node[below] at (3,-.1){$3$};
\draw (4,-.1) to (4,.1);
\node[below] at (4,-.1){$4$};
\draw (5,-.1) to (5,.1);
\node[below] at (5,-.1){$5$};

\draw[fill=black] (0,0) circle (.15);
\fill[orange, opacity=0.3]
(0,6) to (0,0) to (6,0) to (6,6) to (0,6);
\fill[orange, opacity=0.3]
(0,6) to (1,6) to (1,4) to (2,4) to (2,2) to (4,2) to (4,1) to (6,1) to (6,0) to (0,0) to (0,6);
\draw[thick] (1,6) to (1,4) to (2,4) to (2,2) to (4,2) to (4,1) to (6,1);
\draw[->] (3,3) to (2.2,2.2);
\draw[color=red,fill=red] (1,4) circle (.15);
\draw[color=blue,fill=blue] (4,1) circle (.15);
\draw[color=green!80!black,fill=green!80!black] (2,2) circle (.15);
\end{scope}
\end{tikzpicture}
\caption{}
\end{subfigure}
\caption{The sublevel filtrations of \cref{Ex:Stability} and their non-trivial homology modules.  (A) The CW-complex $X$ and functions $f,g\colon\Cells(X)\to \R^2$.  The functions have the same value on every cell except for the green 1-cell in the center.  (B) $H_1\mathcal{S}(f)$ (above) and $H_0\mathcal{S}(f)$ (below); the light shading denotes the region where the vector spaces have dimension 1, and the dark shading denotes the region where they have dimension 2.  (C) $H_1\mathcal{S}(g)$  and $H_0\mathcal{S}(g)$.}
\label{Fig:stability_filtration}
\end{figure}

\paragraph{Kernels of Morphisms of Free Bipersistence Modules}
Throughout this section, the term \emph{bipersistence module} will refer to an $\R^2$-persistence module.  

\begin{lemma}[\cite{chacholski2017combinatorial}]\label{Lem:Free_Kernels}
If $\gamma\colon F\to G$ is a morphism of finitely generated free bipersistence modules, then $\ker \gamma$ is free.
\end{lemma}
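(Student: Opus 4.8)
The plan is to reduce the statement to finitely generated bigraded modules over $R=k[x,y]$, and then to deduce it from Hilbert's syzygy theorem together with the fact that finitely generated graded projective $R$-modules are free.

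First I would carry out the reduction to the $\Z^2$-indexed case. Since $F$ and $G$ are finitely generated and free, they are finitely presented (via $0\to F$ and $0\to G$), so $\gamma$ is a morphism of finitely presented $\R^2$-persistence modules, and \cref{Lem:Fin_Gen_LKE} provides an injective grid function $\G\colon\Z^2\to\R^2$ and a morphism $\gamma'\colon F'\to G'$ of finitely generated $\Z^2$-persistence modules with $\Lan_\G(\gamma')\cong\gamma$. Examining the proof of \cref{Lem:Fin_Gen_LKE}, one may take $F'=F\circ\G$ and $G'=G\circ\G$ with $\im\G$ containing the grades of chosen bases of $F$ and $G$; since $\G$ is injective and each of its coordinate functions is strictly increasing, $\G$ reflects the order on $\Z^2$, and hence $Q^x\circ\G\cong Q^{\G^{-1}(x)}$ for every $x$ in the image of $\G$. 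It follows that $F'$ and $G'$ are again free. Assuming we have shown $\ker\gamma'$ to be free, \cref{Lem:Exact_Extension} gives $\Lan_\G(\ker\gamma')\cong\ker(\Lan_\G(\gamma'))\cong\ker\gamma$, and \cref{Prop:Free_Modules_and_Extension}\,(i) then shows $\ker\gamma$ is free. So it remains to treat morphisms of finitely generated free bigraded $R$-modules.

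For this I would use a standard syzygy argument. Put $C=\coker\gamma'$, a finitely generated bigraded $R$-module. The quotient map $G'\twoheadrightarrow C$ has kernel $\im\gamma'$, and the corestriction $F'\xrightarrow{\gamma'}\im\gamma'$ is a surjection with kernel $\ker\gamma'$; thus $\ker\gamma'$ is a second syzygy of $C$. By Hilbert's syzygy theorem, $R$ has global dimension $2$, so $\mathrm{pd}_R(C)\leq 2$, and a routine dimension-shift argument shows that any second syzygy of a module of projective dimension at most $2$ is projective. Hence $\ker\gamma'$ is projective, and it is finitely generated, being a submodule of the Noetherian module $F'$. Finally, $R$ is a connected $\Z_{\geq 0}^2$-graded ring with $R_{(0,0)}=k$, and over such a ring every finitely generated graded projective module is free: lift a homogeneous basis of $P/(x,y)P$ by graded Nakayama to a minimal homogeneous generating set of $P$, and observe that the resulting surjection from a graded free module onto the projective module $P$ splits, which forces its kernel to vanish. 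Therefore $\ker\gamma'$ is free, completing the reduction.

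The step I expect to require the most care is the passage from $\R^2$ to $\Z^2$: one has to confirm that the grid function furnished by \cref{Lem:Fin_Gen_LKE} can be arranged so that the source and target of the $\Z^2$-morphism stay free, and that pullback along an injective grid function sends $Q^x$ to $Q^{\G^{-1}(x)}$ (this uses that each coordinate of an injective grid function reflects the order on $\Z$). Once one is over $R=k[x,y]$, the remaining ingredients — Hilbert's syzygy theorem and ``graded projective implies free'' — are exactly the classical facts already invoked in \cref{Lem:Fin_Gen_Ker}, so no essentially new difficulty arises there.
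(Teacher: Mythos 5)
Your argument is correct, and the reduction from $\R^2$ to $\Z^2$ is exactly the one the paper uses: apply \cref{Lem:Fin_Gen_LKE} to get $\gamma = \Lan_\G(\gamma')$, then transfer freeness of $\ker\gamma'$ back to $\ker\gamma$ via \cref{Lem:Exact_Extension} and \cref{Prop:Free_Modules_and_Extension}\,(i). You do fill in a detail the paper asserts without comment, namely that $F' = F\circ\G$ and $G' = G\circ\G$ remain free; your observation that an injective grid function has strictly increasing coordinate maps, hence reflects the partial order, so that $Q^x\circ\G\cong Q^{\G^{-1}(x)}$ for $x\in\im\G$, is the right justification.

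Where you genuinely diverge from the paper is in the $\Z^2$-indexed case. The paper simply cites \cite{chacholski2017combinatorial} for the statement that kernels of morphisms between finitely generated free $\Z^2$-persistence modules are free. You instead give a self-contained proof: regard the modules as finitely generated $\Z^2$-graded $k[x,y]$-modules, note that $\ker\gamma'$ is a second syzygy of $\coker\gamma'$, invoke the graded Hilbert syzygy theorem to bound $\mathrm{pd}_R(\coker\gamma')\le 2$, dimension-shift to conclude $\ker\gamma'$ is projective and finitely generated (by Noetherianity, as in \cref{Lem:Fin_Gen_Ker}), and finish with graded Nakayama to show finitely generated graded projective modules over the connected graded ring $k[x,y]$ are free. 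This is the classical commutative-algebra route; the paper's cited reference proves the same fact by a more combinatorial/algorithmic argument that also yields an explicit description of a basis of the kernel. Your approach has the virtue of being self-contained and of making the ``this works for $n=2$ but not $n\ge 3$'' phenomenon transparent (global dimension jumps above $2$), which is exactly the point raised in \cref{Rem:Three_Parameters_or_More}; what it does not give you, and what the cited reference does, is a constructive handle on the kernel basis, which is useful elsewhere in the computational literature.
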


\begin{proof}
The corresponding result for $\Z^2$-persistence modules is proven in \cite{chacholski2017combinatorial}.  To obtain the stated result, note that by \cref{Lem:Fin_Gen_LKE}, $\gamma$ is the left Kan extension of a morphism $\gamma'$ of finitely generated, free $\Z^2$-persistence modules along some grid function $\G$.  By the result for $\Z^2$-persistence modules, $\ker \gamma'$ is free.  \cref{Lem:Exact_Extension} then implies that $\ker \gamma=\Lan_{\G}(\ker \gamma')$, and so $\gamma$ is free by \cref{Prop:Free_Modules_and_Extension}\,(i).
 \end{proof}

Recall from \cref{Sec:Free_Mods} that if $B=(b_1,\dots,b_{|B|})$ is an ordered basis of a finitely generated free bipersistence module $F$ and $v\in \bigcup_{a\in \R^2} F_a$, then $[v]^B\in k^{|B|}$ records the field coefficients in the unique expression of $v$ as a linear combination of $B$.  Let \[B.v = \{b_i \mid [v]^B_i\neq 0\}.\]
This definition of $B.v$ is in fact independent of the choice of order on $B$, so it extends to unordered bases $B$.  

\begin{lemma}
\label{Lem:Matrix_kernel}
Let $\gamma\colon F\to G$ be a morphism of finitely generated free bipersistence modules with ordered bases $B$ and $B'$, respectively, and let $T$ be the unlabeled matrix representing $\gamma$ with respect to $B$ and $B'$.  Then for any $v\in \bigcup_{a\in \R^2} F_a$, we have $v\in \ker \gamma$ if and only if $[v]^B\in \ker T$.
\end{lemma}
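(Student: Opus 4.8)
The plan is to unwind both sides into coordinates over the basis $B'$ of $G$ and check they agree, up to coordinates that are forced to vanish on both sides by the grading. Fix $a\in\R^2$ with $v\in F_a$, and recall that $[v]^B$ is characterized by $[v]^B_i=0$ whenever $\gr(b_i)\not\leq a$ and $v=\sum_{i:\gr(b_i)\leq a}[v]^B_i\,F_{\gr(b_i)\subto a}(b_i)$.

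First I would apply $\gamma_a$ to this expression. Using linearity, naturality of $\gamma$, and functoriality of $G$, one gets
\[
\gamma_a(v)=\sum_{i:\gr(b_i)\leq a}[v]^B_i\,G_{\gr(b_i)\subto a}\bigl(\gamma(b_i)\bigr).
\]
Next, by the definition of $T$ the $i$-th column of $T$ is $[\gamma(b_i)]^{B'}$, so $\gamma(b_i)=\sum_{k:\gr(b'_k)\leq\gr(b_i)}T_{k,i}\,G_{\gr(b'_k)\subto\gr(b_i)}(b'_k)$, and pushing this forward to level $a$ (composing internal maps) and interchanging the order of summation yields
\[
\gamma_a(v)=\sum_{k:\gr(b'_k)\leq a}\Bigl(\sum_i T_{k,i}[v]^B_i\Bigr)G_{\gr(b'_k)\subto a}(b'_k)=\sum_{k:\gr(b'_k)\leq a}(T[v]^B)_k\,G_{\gr(b'_k)\subto a}(b'_k).
\]

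Then I would finish with two observations. Since $G\cong\bigoplus_k Q^{\gr(b'_k)}$, the family $\{G_{\gr(b'_k)\subto a}(b'_k):\gr(b'_k)\leq a\}$ is a basis of $G_a$, so the last display shows $\gamma_a(v)=0$ if and only if $(T[v]^B)_k=0$ for every $k$ with $\gr(b'_k)\leq a$. On the other hand, for any $k$ with $\gr(b'_k)\not\leq a$ the coordinate $(T[v]^B)_k=\sum_i T_{k,i}[v]^B_i$ vanishes automatically: a nonzero term would require $[v]^B_i\neq 0$, hence $\gr(b_i)\leq a$, and $T_{k,i}\neq 0$, hence $\gr(b'_k)\leq\gr(b_i)$, forcing $\gr(b'_k)\leq a$. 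Combining the two observations gives $\gamma_a(v)=0$ iff $T[v]^B=0$, i.e.\ $v\in\ker\gamma$ iff $[v]^B\in\ker T$.

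I do not expect a real obstacle; the argument is pure bookkeeping. The only point needing care is the last step: making sure the coordinates of $T[v]^B$ lying outside the ``support'' of $G_a$ are forced to be zero by the grades, so that they do not spuriously break the equivalence — and this is exactly the grade-comparison argument just given. (Note that nothing here is special to bipersistence modules; the statement and proof work verbatim for free modules over any poset.)
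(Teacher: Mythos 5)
Your proof is correct and follows essentially the same route as the paper's, only spelled out in full. The paper's own proof is a two-line argument: it observes that $\gamma(v)=0$ iff $[\gamma(v)]^{B'}=0$, and then asserts the identity $[\gamma(v)]^{B'}=T[v]^B$ without further comment. Your computation is precisely a careful derivation of that asserted identity: you push the basis expansion of $v$ through $\gamma_a$, expand each $\gamma(b_i)$ in $B'$, interchange the summations, and — the one step that actually requires thought — verify that the coordinates $(T[v]^B)_k$ with $\gr(b'_k)\not\leq a$ vanish automatically by the grade comparisons, so that ``vanishing of the coordinates supported at $a$'' is equivalent to ``vanishing of the whole vector.'' That last check is exactly the bookkeeping the paper leaves implicit, and it is the genuine content of the lemma, so making it explicit is a reasonable thing to do. Your closing remark that nothing here is special to $\R^2$ (or even to posets with the structure used elsewhere in the paper) is accurate; the paper states the lemma for bipersistence modules only because that is the case it needs.
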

\begin{proof}
We have $\gamma(v)=0$ if and only if $[\gamma(v)]^{B'}=0$.  
But $[\gamma(v)]^{B'}=T[v]^B$, which gives the result.
\end{proof}

\begin{lemma}
\label{Lem:Kernel_grade}
For $\gamma\colon F\to G$ a morphism of finitely generated free persistence modules, $B$ a basis of $F$, $C$ a basis of $\ker \gamma$, and $c\in C$, we have \[\gr(c)=\bigvee_{b\in B.c}\gr(b).\]
\end{lemma}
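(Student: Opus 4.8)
The plan is to prove the two inequalities $\gr(c)\ge\bigvee_{b\in B.c}\gr(b)$ and $\gr(c)\le\bigvee_{b\in B.c}\gr(b)$ separately. Throughout, write $a:=\bigvee_{b\in B.c}\gr(b)$; this join exists because $B.c$ is finite and nonempty (as $c\ne 0$, since $c$ belongs to a minimal generating set) and, as the first inequality will show, bounded above. For the first inequality I would simply expand $c$ in terms of the basis $B$: with $[c]^B$ the coefficient vector of $c$, we have $c=\sum_{b\in B.c}[c]^B_b\,F_{\gr(b)\subto\gr(c)}(b)$, and each summand is only defined because $\gr(b)\le\gr(c)$ for every $b\in B.c$. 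Hence $\gr(c)$ is an upper bound for $\{\gr(b):b\in B.c\}$, so $a\le\gr(c)$.

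For the reverse inequality, the key move is to ``lift'' $c$ to grade $a$. Since $\gr(b)\le a$ for every $b\in B.c$, the element $c':=\sum_{b\in B.c}[c]^B_b\,F_{\gr(b)\subto a}(b)\in F_a$ is well-defined, and the uniqueness clause in the definition of the coordinate vector gives $[c']^B=[c]^B$; moreover a straightforward computation with the internal maps shows $F_{a\subto\gr(c)}(c')=c$. I would then invoke \cref{Lem:Matrix_kernel}: since $c\in\ker\gamma$, we have $[c]^B\in\ker T$ for $T$ the unlabeled matrix of $\gamma$, and because $[c']^B=[c]^B$, the same lemma gives $c'\in\ker\gamma$. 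As $\ker\gamma$ is a submodule of $F$, its internal maps are restrictions of those of $F$, so $c=(\ker\gamma)_{a\subto\gr(c)}(c')$ lies in the image of $(\ker\gamma)_{a\subto\gr(c)}$.

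Now suppose for contradiction that $a\ne\gr(c)$; combined with $a\le\gr(c)$ this gives $a<\gr(c)$, so $c\in\sum_{a'<\gr(c)}\im\big((\ker\gamma)_{a'\subto\gr(c)}\big)$. But $\ker\gamma$ is free with basis $C$ (cf. \cref{Lem:Free_Kernels} for the $n=2$ case), and the argument in the proof of \cref{prop:Basis_Grades}, applied to $\ker\gamma$ in place of $F$ and to the grade $\gr(c)$, shows that the basis elements of $\ker\gamma$ of grade $\gr(c)$ descend to a basis of the quotient $(\ker\gamma)_{\gr(c)}/V$, where $V=\sum_{a'<\gr(c)}\im\big((\ker\gamma)_{a'\subto\gr(c)}\big)$. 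In particular the image of $c$ in this quotient is a nonzero basis vector, so $c\notin V$ --- contradicting the previous sentence. Hence $a=\gr(c)$, which is the claim.

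The argument is mostly bookkeeping with free modules, and the only point that really needs care --- the crux of the proof --- is the observation that replacing $c$ by its lift $c'$ leaves the coordinate vector unchanged, so that membership in $\ker\gamma$, which by \cref{Lem:Matrix_kernel} depends only on the coordinate vector and not on the grade, is preserved; the rest follows from minimality of the basis $C$.
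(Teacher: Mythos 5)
Your proposal is correct and follows essentially the same route as the paper's proof: establish $a\le\gr(c)$ from the basis expansion, lift $c$ to $c'\in F_a$ with $[c']^B=[c]^B$, invoke \cref{Lem:Matrix_kernel} to get $c'\in\ker\gamma$, and then conclude $a=\gr(c)$ from minimality of the basis $C$. The only cosmetic difference is in the last step, where the paper tersely asserts $c=c'$ by minimality of the generating set, whereas you unpack that same fact via the quotient $(\ker\gamma)_{\gr(c)}/V$ from the proof of \cref{prop:Basis_Grades}; these are two phrasings of the same observation.
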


\begin{proof}
Let $g=\bigvee_{b\in B.c}\gr(b)$, and observe that $\gr(c)\geq g$.  Consider the unique element $c'\in F_g$ such that $c=F_{g\subto\gr(c)}(c')$.  For any order on $B$, we have $[c']^B=[c]^B$, so $c'\in \ker \gamma$ by \cref{Lem:Matrix_kernel}.  Thus, since $c$ is an element of a basis of $\ker \gamma$ and a basis of a free module is a minimal generating set, we must have that $c=c'$.  This implies in particular that $\gr(c)=g$, as desired.  
\end{proof}

For $F$ a bipersistence module and $v\in \bigcup_{a\in \R^2} F_a$, let us write the $x$- and $y$-coordinates of $\gr(v)$ as $v_x$ and $v_y$, respectively.

\begin{lemma}\label{Lem:Injective_Maps}
Given a map $\gamma\colon F\to G$ of finitely generated free bipersistence modules, a basis $B$ of $F$, and a basis $C$ of $\ker \gamma$, there exist injective maps $j^x,j^y\colon C\to B$ such that  for all $c\in C$, 
\[c_x=j^x(c)_x\quad\textup{ and }\quad c_y=j^y(c)_y.\]
\end{lemma}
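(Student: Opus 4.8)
The plan is to construct $j^x$ by reducing, for each real number $v$, to the counting inequality
\[|\{c\in C\mid c_x=v\}|\ \le\ |\{b\in B\mid b_x=v\}|,\]
and then to obtain $j^y$ by the symmetric argument with the two coordinates interchanged. Granting the inequality: $\ker\gamma$ is finitely presented by \cref{Lem:Fin_Gen_Ker}, hence finitely generated, so $C$ is finite and only finitely many values occur among the $c_x$. For each such $v$ one fixes an arbitrary injection $\{c\in C\mid c_x=v\}\hookrightarrow\{b\in B\mid b_x=v\}$, and since the target sets are pairwise disjoint as $v$ varies, these assemble into a single injection $j^x\colon C\to B$ with $j^x(c)_x=c_x$ for all $c\in C$.

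To prove the inequality I would localize at the vertical line $x=v$. (Comparing $C$ directly with the free submodule of $F$ generated by $\{b\in B\mid b_x\le v\}$ does not work, since $\ker\gamma$ may contain homogeneous elements whose $B$-support consists of such $b$ yet whose grade has large $x$-coordinate, so that submodule need not have $\{c\in C\mid c_x\le v\}$ as a basis.) Fix $v$; using finiteness of $B\cup C$, choose $u<v$ so close to $v$ that no element of $B\cup C$ has $x$-coordinate strictly between $u$ and $v$. For $t\in\{u,v\}$, restrict along $\iota_t\colon\R\to\R^2$, $\iota_t(a)=(t,a)$. Since kernels, cokernels, and direct sums in functor categories are computed objectwise, $-\circ\iota_t$ is exact, preserves direct sums, and sends $Q^{(x_0,y_0)}$ to $k^{[y_0,\infty)}$ if $x_0\le t$ and to $0$ otherwise; hence $F\circ\iota_v\cong\bigoplus_{b\in B,\,b_x\le v}k^{[b_y,\infty)}$, $F\circ\iota_u\cong\bigoplus_{b\in B,\,b_x<v}k^{[b_y,\infty)}$, and, using \cref{Lem:Free_Kernels} and exactness, $(\ker\gamma)\circ\iota_v\cong\bigoplus_{c\in C,\,c_x\le v}k^{[c_y,\infty)}$, with the analogous formula at $u$. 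The canonical map $\mu^F\colon F\circ\iota_u\Rightarrow F\circ\iota_v$ induced by $(u,a)\le(v,a)$ restricts to the identity on each summand indexed by a $b$ with $b_x<v$, so $\operatorname{coker}\mu^F\cong\bigoplus_{b\in B,\,b_x=v}k^{[b_y,\infty)}$ is free on $|\{b\in B\mid b_x=v\}|$ generators; likewise $\operatorname{coker}\mu^{\ker\gamma}$ is free on $|\{c\in C\mid c_x=v\}|$ generators, and the map $\mu^G$ (defined analogously for $G$) is injective, being a direct sum of identity maps and maps out of the zero module.

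The step I expect to be the main obstacle is then to show that the map $\operatorname{coker}\mu^{\ker\gamma}\to\operatorname{coker}\mu^F$ induced by the inclusion $\ker\gamma\hookrightarrow F$ is injective. This is a diagram chase in the commuting square formed by $\mu^{\ker\gamma}$, $\mu^F$, and the two inclusions: if an element of $(\ker\gamma)\circ\iota_v$ lies in the image of $\mu^F$, say it equals $\mu^F(w)$ with $w\in F\circ\iota_u$, then by naturality of $\gamma$ we have $\mu^G\big((\gamma\circ\iota_u)(w)\big)=(\gamma\circ\iota_v)\big(\mu^F(w)\big)=0$, the last equality because $(\ker\gamma)\circ\iota_v=\ker(\gamma\circ\iota_v)$; injectivity of $\mu^G$ then forces $w\in\ker(\gamma\circ\iota_u)=(\ker\gamma)\circ\iota_u$, so $\mu^F(w)\in\operatorname{im}\mu^{\ker\gamma}$. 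Hence $\operatorname{coker}\mu^{\ker\gamma}$ embeds as a submodule of the free $\R$-persistence module $\operatorname{coker}\mu^F$, and comparing dimensions at any parameter exceeding all generator grades of both modules yields $|\{c\in C\mid c_x=v\}|\le|\{b\in B\mid b_x=v\}|$, as required. This argument uses only \cref{Lem:Free_Kernels} and not \cref{Lem:Kernel_grade}.
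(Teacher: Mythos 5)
Your proof is correct, and it takes a genuinely different route from the paper's. The paper defines $j^y(c)=\max\{b\in B.c\mid b_y=c_y\}$ (well-formed by \cref{Lem:Kernel_grade}), observes that this map is injective when $B$ is colexicographically ordered and $C$ is a Gr\"obner basis with respect to $B$ (such $C$ exists by \cref{Prop:Kernels_GBs}), and then invokes \cref{prop:Basis_Grades} to pass to an arbitrary basis $C$. Your argument instead reduces to the per-value counting inequality and establishes it by restricting along vertical lines $\iota_u,\iota_v$, computing the cokernels of the transition maps $\mu^F$ and $\mu^{\ker\gamma}$, and running a five-lemma-style diagram chase (using injectivity of $\mu^G$, which holds for any free $G$) to embed $\operatorname{coker}\mu^{\ker\gamma}$ into $\operatorname{coker}\mu^F$; a dimension count in degree $\gg 0$ finishes. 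Both arguments rest on \cref{Lem:Free_Kernels}, but yours avoids the Gr\"obner-basis machinery of \cref{Prop:Kernels_GBs} (and the algorithmic result it cites from \cite{lesnick2019computing}), making it more self-contained and more purely homological; the paper's approach is shorter given that \cref{Prop:Kernels_GBs} is available, and its explicit formula for $j^y$ gives the slightly stronger property $j^y(c)\in B.c$, though a check of the downstream use in \cref{Lemma:Stability_of_Kernel2} shows that this extra property is not actually needed there, so your weaker statement suffices.
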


Our proof of \cref{Lem:Injective_Maps} will make light use of the language of Gr\"obner bases, which we now review in our special case.  Let $F$ be a finitely generated, free bipersistence module and fix an ordered basis $B$ of $F$.  For $v\in \bigcup_{a\in \R^2} F_a$, define the \emph{leading component} of $v$ to be $\max\,\{b\in B.v\}$.  For $F'\subset F$ a free submodule of $F$, a basis $C$ of $F'$ is called a \emph{Gr\"obner basis} (with respect to $B$) if no two distinct elements of $C$ have the same leading component in $F$.  It is easily checked that this definition agrees with the usual, more general definition of Gr\"obner basis, though we will not make use of that agreement.

We say $B$ is \emph{colexicographically ordered} if for all $b<b'\in B$, either $b_y<b'_y$ or $b_y=b'_y$ and $b_x\leq b'_x$.  

\begin{proposition}[{\cite[Remark 3.5]{lesnick2019computing}}]\label{Prop:Kernels_GBs}
If $\gamma\colon F\to G$ is a morphism of finitely generated, free bipersistence modules and $B$ is a colexicographically ordered basis of $F$, then there exists a basis $C$ of $\ker \gamma$ which is also a Gr\"obner basis with respect to $B$.
\end{proposition}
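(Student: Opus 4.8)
\emph{Proof proposal.} The strategy is to manufacture the desired Gröbner basis by a Buchberger-type reduction, exploiting the extra room provided by the colexicographic ordering of $B$. By \cref{Lem:Free_Kernels}, $\ker\gamma$ is free; since $F$ is finitely generated every basis of $\ker\gamma$ is finite, and by \cref{prop:Basis_Grades} all bases of $\ker\gamma$ have a common cardinality $m$. For a basis $C$ of $\ker\gamma$ and $c\in C$, write $\mathrm{lc}(c)=\max(B.c)$ for the leading component of $c$ (the maximum taken in the total order of $B$), and let $L(C)$ be the size-$m$ multiset $\{\mathrm{lc}(c):c\in C\}$ of elements of $B$. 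There are only finitely many such multisets, so I would choose $C$ so that $L(C)$ is minimal in the multiset order (where a multiset strictly decreases if one copy of some element is replaced by elements each strictly smaller in $B$). The claim is that such a $C$ is a Gröbner basis with respect to $B$.

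The single place colexicographicity enters is the following observation: if $c\in C$ and $b=\mathrm{lc}(c)$, then $c_y=b_y$. Indeed, every $b'\in B.c$ satisfies $b'\leq b$ in $B$, hence $b'_y\leq b_y$ by the colex property, while \cref{Lem:Kernel_grade} gives $\gr(c)=\bigvee_{b'\in B.c}\gr(b')$, so $c_y=\max_{b'\in B.c}b'_y=b_y$.

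Now suppose $C$ were not a Gröbner basis, so that distinct $c_i,c_j\in C$ have $\mathrm{lc}(c_i)=\mathrm{lc}(c_j)=b$. By the previous paragraph, $\gr(c_i)$ and $\gr(c_j)$ share a $y$-coordinate, so they are comparable; say $\gr(c_i)\leq\gr(c_j)$. Put $\lambda=[c_j]^B_b/[c_i]^B_b$, which is nonzero since $b\in B.c_i\cap B.c_j$, and set $c_j'=c_j-\lambda\,F_{\gr(c_i)\subto\gr(c_j)}(c_i)$. Because $\gr(c_i)\leq\gr(c_j)$, the set $C'=(C\setminus\{c_j\})\cup\{c_j'\}$ is the image of $C$ under the automorphism of $\ker\gamma$ fixing each $c_k$ with $k\neq j$ and sending $c_j$ to $c_j'$, so $C'$ is again a basis; in particular $c_j'\neq 0$. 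Since coordinate vectors are unchanged by internal maps, $[c_j']^B=[c_j]^B-\lambda[c_i]^B$, whence $B.c_j'\subseteq(B.c_i\cup B.c_j)\setminus\{b\}$; as $b$ is the $B$-maximum of both $B.c_i$ and $B.c_j$, every element of $B.c_j'$ lies strictly below $b$, so $\mathrm{lc}(c_j')<b$. Thus $L(C')$ arises from $L(C)$ by replacing one copy of $b$ with the strictly smaller element $\mathrm{lc}(c_j')$, contradicting the minimality of $L(C)$. Hence $C$ is a Gröbner basis.

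\emph{Expected main obstacle.} Everything hinges on the observation that a shared leading component forces the two grades to agree in their $y$-coordinates, so that the reduction $c_j\mapsto c_j-\lambda F_{\gr(c_i)\subto\gr(c_j)}(c_i)$ is a legitimate operation on persistence modules; for a general ordered basis the two grades could be incomparable and this step would be meaningless. Checking that the reduction is an honest change of basis of $\ker\gamma$ and that it strictly lowers $L(C)$ is then routine.
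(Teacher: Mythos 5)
Your argument is correct, but it takes a genuinely different route from the paper's. The paper disposes of the proposition in two lines: it cites an algorithmic construction in \cite{lesnick2019computing} for $\Z^2$-indexed modules and then lifts to $\R^2$ via the left Kan extension machinery (\cref{Lem:Fin_Gen_LKE}, \cref{Lem:Exact_Extension}, \cref{Prop:Free_Modules_and_Extension}). You instead give a self-contained, intrinsic Buchberger-style argument directly over $\R^2$: choose a basis $C$ of $\ker\gamma$ minimizing the multiset of leading components, use colexicography together with \cref{Lem:Kernel_grade} to show that a shared leading component $b$ forces the two basis grades to share their $y$-coordinate and hence be comparable, perform the one-step reduction, and observe that this strictly decreases the multiset, contradicting minimality. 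The details check out: the reduction $c_j\mapsto c_j-\lambda F_{\gr(c_i)\subto\gr(c_j)}(c_i)$ is a change of basis because it is induced by an evident automorphism of $\ker\gamma$ (and hence $c_j'\neq 0$ and the result is again a basis, with grades automatically matching by \cref{prop:Basis_Grades} and \cref{Lem:Kernel_grade}); coordinate vectors are indeed invariant under internal maps, so $[c_j']^B = [c_j]^B - \lambda[c_i]^B$ and $\mathrm{lc}(c_j')<b$; and termination is trivial because the multisets range over a finite set. What your approach buys is independence from \cite{lesnick2019computing} and a proof that works directly over $\R^2$ without the $\Z^2$ reduction, at the price of being longer. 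One small remark: while you invoke minimality of $L(C)$ among all bases, since there are only finitely many size-$m$ multisets over the finite set $B$, it would be cleaner to say explicitly that you are using well-foundedness of the induced multiset order (or just iterate the reduction and note it must halt); as written this is implicit but fine.
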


\begin{proof}
The corresponding result for $\Z^2$-persistence modules is proven algorithmically in \cite{lesnick2019computing}.  Given this, the statement for $\R^2$-persistence modules follows readily by expressing $\gamma$ as a left Kan extension along a grid function via \cref{Lem:Fin_Gen_LKE}, and then applying \cref{Lem:Exact_Extension} and \cref{Prop:Free_Modules_and_Extension}.
\end{proof}

\begin{proof}[Proof of \cref{Lem:Injective_Maps}]
By symmetry, it suffices to show that there exists a map $j^y$ as in the statement of the lemma.  Given any ordered basis $B$ of $F$ and any basis $C$ of $\ker \gamma$, we define a map $j^y\colon C\to B$ by 
\[j^y(c)=\, \max\, \{b\in B.c\mid b_y=c_y\}.\]

The definition of $j^y$ is well formed, because it follows from \cref{Lem:Kernel_grade} that the set over which we take the maximum is non-empty.  By definition, $c_y= j^y(c)_y$ 
for all $c\in C$.  

In general, $j^y$ is not necessarily injective.  But if $B$ is colexicographically ordered and $C$ is a basis of $\ker \gamma$ which is also a Gr\"obner basis with respect to $B$ (such $C$ exists by \cref{Prop:Kernels_GBs}), then $j^y$ is injective; indeed, the fact that $C$ is a Gr\"obner basis implies that the leading components in $F$ of the elements of $C$ are unique, and by the colexicographic choice of ordering on $B$, the leading component $b$ of each $c\in C$ satisfies $b_y=c_y$.   

This establishes that for any basis $B$ of $F$, a map $j^y:C\to B$ with the desired properties exists for one particular choice of basis $C$ of $\ker f$.  But then by \cref{prop:Basis_Grades}, the desired map exists for all choices of $C$, as claimed.  \end{proof}

\begin{example}\label{Ex:Maps_jx_and_jy}
Consider the map $\delta^f_1\colon \C_1(f)\to \C_0(f)$ of \cref{Ex:Stability}.  Let $B=(b_1,b_2,b_3)$ be an ordered basis of $\C_1(f)$ with \[\gr(b_1)=(1,4), \quad \gr(b_2)=(3,3),\quad\textup{ and }\quad\gr(b_3)=(4,1).\]  Let $C=(c_1,c_2)$ be an ordered basis of $\ker \partial_1=H_1\mathcal{S}(f)$ with $\gr(c_1)=(3,4)$ and $\gr(c_2)=(4,3)$.  \cref{Lem:Injective_Maps} guarantees the existence of maps $j^x,j^y\colon C\to B$ preserving $x$- and $y$-coordinates, respectively.  The only possibility is \[j^y(c_1)=b_1,\quad j^x(c_1)=j^y(c_2)=b_2,\quad \textup{ and } \quad j^x(c_2)=b_3.\]

In this example, one can check that if we make a small change in the $x$-coordinate of an element $b\in B$, then this changes $H_1\mathcal{S}(f)$ only though a corresponding change in the $x$-coordinate of $(j^x)^{-1}(b)$; the analogous statement for $j^y$ is also true.  For example, if we change $\gr(b_2)_x$ from $3$ to $2$, then $\gr(c_1)_x$ changes from 3 to 2.  And if we change $\gr(b_1)_x$ from $1$ to $0$, this causes no change to $H_1\mathcal{S}(f)$, because $(j^x)^{-1}(b_1)=\emptyset$.  
This illustrates the role that the injections $j^x$ and $j^y$ play in the proof of  \cref{Thm:Universality}\,(i): In general, under suitable assumptions, a perturbation to the $x$-grade of a basis element $b$ of $\C_i(f)$ causes only a corresponding perturbation to the $x$-grade of $(j^x)^{-1}(b)$.  Again, the same is true if we replace $x$ with $y$.  In what follows, we make this precise by using Kan extensions to encode the perturbations.
\end{example}

For $B=(b_1,\dots,b_{|B|})$ a finite ordered basis of a free bipersistence module, let $\gr(B)\colon\{1,\ldots,|B|\} \to \R^2$ denote the function sending $i$ to $\gr(b_i)$.  

\begin{lemma}\label{Lemma:Stability_of_Kernel2}
Suppose we are given a morphism of finitely generated free bipersistence modules $\gamma\colon F\to G$, a grid function $\G\colon \R^2\to \R^2$, and ordered bases $B$ and $C$ of  $F$ and $\ker \gamma$, respectively.  Let $B'=\Lan_{\G}(B)$.  Then $C$ induces an ordered basis $C'$ of $\ker \Lan_{\G}(f)$ with $|C'|=|C|$, such that
\begin{itemize}
\item [(i)] the inclusions $\ker \gamma \hookrightarrow F$ and $\ker \Lan_{\G}(\gamma) \hookrightarrow \Lan_{\G}(F)$ are represented by the same unlabeled matrix with respect to the bases $B$, $B'$, $C$, and $C'$,  
\item[(ii)] $\|\gr(C)-\gr(C')\|_p\leq \|\gr(B)-\gr(B')\|_p.$
\end{itemize}
\end{lemma}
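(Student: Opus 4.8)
The plan is to set $C':=\Lan_{\G}(C)$, with the ordering inherited from $C$, and to deduce (i) and (ii) from the left Kan extension machinery of \cref{Def:Concrete_LKan_Description}, \cref{Notation:Kan_Extended_Basis}, and \cref{Prop:Free_Modules_and_Extension}, together with \cref{Lem:Free_Kernels} and \cref{Lem:Injective_Maps}. Throughout, write $b\mapsto b'$ and $c\mapsto c'$ for the identity-on-underlying-set bijections $B\to B'$ and $C\to C'$.

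First I would observe that $\Lan_{\G}(\ker\gamma)=\ker\Lan_{\G}(\gamma)$ as submodules of $\Lan_{\G}(F)$, with the inclusion equal to $\Lan_{\G}(\iota)$, where $\iota\colon\ker\gamma\hookrightarrow F$ denotes the inclusion. This is immediate from the objectwise description in \cref{Def:Concrete_LKan_Description}: evaluating at $a\in\R^2$ turns $\Lan_{\G}(\gamma)$ into $\gamma_{\G^{-1}(a)}$, whose kernel is $(\ker\gamma)_{\G^{-1}(a)}=\Lan_{\G}(\ker\gamma)_a$, compatibly with the internal maps. (Alternatively, factor $\gamma$ as $F\twoheadrightarrow\im\gamma\hookrightarrow G$ and apply \cref{Lem:Exact_Extension} to the two resulting short exact sequences.) Since $\ker\gamma$ is free by \cref{Lem:Free_Kernels}, $C$ is a genuine basis, so \cref{Prop:Free_Modules_and_Extension}(i)--(ii) gives that $C'$ is a basis of $\Lan_{\G}(\ker\gamma)=\ker\Lan_{\G}(\gamma)$, and $|C'|=|C|$. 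Part (i) is then exactly \cref{Prop:Free_Modules_and_Extension}(iii) applied to the morphism $\iota\colon\ker\gamma\to F$ with the ordered bases $C$ of $\ker\gamma$ and $B$ of $F$: the unlabeled matrices underlying $[\iota]^{B,C}$ and $[\Lan_{\G}(\iota)]^{B',C'}$ coincide.

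For part (ii), the key point is that Kan extension along $\G$ sends grades to their $\G$-images: by \cref{Notation:Kan_Extended_Basis} together with the identity $\Lan_{\G}(Q^x)=Q^{\G(x)}$ from the proof of \cref{Prop:Free_Modules_and_Extension}, we have $\gr(b')=\G(\gr(b))$ for each $b\in B$ and $\gr(c')=\G(\gr(c))$ for each $c\in C$. Writing $v_x,v_y$ for the two coordinates of $\gr(v)$ and using that a grid function splits coordinatewise, $\G(a_1,a_2)=(\G_1(a_1),\G_2(a_2))$, we get $\gr(c)-\gr(c')=(c_x-\G_1(c_x),\,c_y-\G_2(c_y))$, and analogously for $b\in B$. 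Now \cref{Lem:Injective_Maps} supplies injections $j^x,j^y\colon C\to B$ with $c_x=j^x(c)_x$ and $c_y=j^y(c)_y$ for all $c\in C$; hence $c_x-\G_1(c_x)$ equals the $x$-coordinate of $\gr(j^x(c))-\gr\bigl(j^x(c)'\bigr)$ and $c_y-\G_2(c_y)$ equals the $y$-coordinate of $\gr(j^y(c))-\gr\bigl(j^y(c)'\bigr)$. For $p<\infty$, I would then sum $\|\gr(c)-\gr(c')\|_p^p$ over $c\in C$, split each term into its $x$- and $y$-parts, and use injectivity of $j^x$ and $j^y$ (so distinct $c$'s land on distinct elements of $B$, and every summand is nonnegative) to bound the total by $\sum_{b\in B}\bigl(|b_x-\G_1(b_x)|^p+|b_y-\G_2(b_y)|^p\bigr)=\sum_{b\in B}\|\gr(b)-\gr(b')\|_p^p$; taking $p$-th roots gives (ii). The case $p=\infty$ is the same argument with $\max$ in place of $\sum$.

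I expect essentially all the real content to lie in \cref{Lem:Injective_Maps} (the Gr\"obner basis argument), which is already available; the only thing to be careful about here is that the coordinatewise splitting of $\G$ is precisely what lets the two separate injections $j^x$ and $j^y$ be used simultaneously, since the $x$-defect $c_x-\G_1(c_x)$ of $c$ depends only on $c_x$ and the $y$-defect only on $c_y$, so $j^x$ controls the $x$-parts and $j^y$ the $y$-parts independently.
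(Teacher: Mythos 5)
Your proof is correct and follows essentially the same structure as the paper's. You set $C'=\Lan_\G(C)$, establish $\ker\Lan_\G(\gamma)=\Lan_\G(\ker\gamma)$ via exactness, deduce (i) from \cref{Prop:Free_Modules_and_Extension}\,(iii), and deduce (ii) from the injections of \cref{Lem:Injective_Maps}, exactly as the paper does. The one small departure is in the middle of (ii): the paper proves $c'_x=j^x(c)'_x$ by invoking \cref{Lem:Kernel_grade} to express both $\gr(c)$ and $\gr(c')$ as suprema over $B.c$ and then using order-preservation of $\G_1$, whereas you read $\gr(c')=\G(\gr(c))$ directly off the Kan-extension construction of $C'$ (via \cref{Notation:Kan_Extended_Basis} and \cref{Prop:Free_Modules_and_Extension}\,(ii)) and then compose with $c_x=j^x(c)_x$. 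Your shortcut is cleaner and avoids a second application of \cref{Lem:Kernel_grade}, but the two routes are logically interchangeable and both hinge on the same technical input, \cref{Lem:Injective_Maps}.
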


\begin{proof}
By \cref{Lem:Exact_Extension}, $\ker \Lan_{\G}(\gamma)=\Lan_{\G}(\ker \gamma)$.  Thus, by \cref{Prop:Free_Modules_and_Extension}\,(ii), we may take $C'= \Lan_{\G}(C)$ (see \cref{Notation:Kan_Extended_Basis}).  Then (i) follows immediately from \cref{Prop:Free_Modules_and_Extension}\,(iii).

We now prove (ii).  Let $j^x,j^y\colon C\to B$ be injections as in the statement of \cref{Lem:Injective_Maps}.  For $b\in B$, let $b'$ denote the corresponding element of $B'$, and for $c\in C$, let $c'$ denote the corresponding element of $C'$. 
We claim that for all $c\in C$, 
\[
c'_x=j^x(c)'_x\quad\textup{ and }\quad c'_y=j^y(c)'_y.
\]
We will check the first equality; the proof of the second one is the same.  

For each $c\in C$, we have $[c']^{B'}=[c]^B$, as the vectors are equal to the same column of the matrix in (i). Thus \[B'.c'=\{b'\mid b\in B.c\}. \] Since $C$ and $C'$ are bases of $\ker \gamma$ and $\ker \Lan_\G(\gamma)$, it follows from \cref{Lem:Kernel_grade} that 
\begin{align*}
\gr(c)&=\bigvee_{b\in B.c}\gr(b),\\
\gr(c')&=\bigvee_{b'\in B'.c'}\gr(b')=\bigvee_{b\in B.c}\gr(b').
\end{align*}
Therefore,  \[c_x=\max_{b\in B.c} b_x\quad\textup{ and }\quad c'_x=\max_{b\in B.c} b'_x.\]  By the definition of $j^x$, we have 
$j^x(c)_x=c_x$, so \[j^x(c)_x=\max_{b\in B.c} b_x.\]  Since $\G_1$ is order-preserving  and $b'_x=\G_1(b_x)$ for all $b\in B$, the bijection $B\to B'$ mapping $b$ to $b'$ preserves the order of $x$-coordinates.  
Therefore, \[j^x(c)'_x=\max_{b\in B.c} b'_x=c'_x,\]
as claimed.

It now follows that for all $c\in C$, we have 
\begin{align*}
| c_x-c'_x |=|j^x(c)_x-j^x(c)'_x|,\\
| c_y-c'_y |=|j^y(c)_y-j^y(c)'_y|.
\end{align*}
This implies that 
\begin{align*}
\|\gr(C)-\gr(C')\|_p&=\left(\sum_{c\in C} | c_x-c'_x |^p+| c_y-c'_y |^p \right)^{\frac{1}{p}}\\
&=       \left(\sum_{c\in C} |j^x(c)_x-j^x(c)'_x|^p+|j^y(c)_y-j^y(c)'_y|^p \right)^{\frac{1}{p}}\\
&\leq \left(\sum_{b\in B} | b_x-b'_x |^p+| b_y-b'_y |^p \right)^{\frac{1}{p}}\\
&= \|\gr(B)-\gr(B')\|_p,
\end{align*}
where the inequality follows from the injectivity of $j^x$ and $j^y$.
\end{proof}

Recall from \cref{Def:Order_Refines} that for functions $f,g\colon S\to \R$, we say $g$ is $f$-compatible if $f(x)\leq f(y)$ implies $g(x)\leq g(y)$ for all $x,y\in S$.  We extend this definition to functions $f,g:S\to \R^2$ by declaring that $g$ is $f$-compatible if for each $i\in \{1,2\}$, $g^i$ is $f^i$-compatible, where $f^i,g^i\colon S\to \R$ are the $i^{\mathrm{th}}$ component functions of $f$ and $g$.

\begin{proof}[Proof of \cref{Thm:Universality}\,(i)]
We first prove the result for the case $n=2$, and then briefly discuss the easy adaptation of the proof to the case $n=1$.
Let $X$ be a finite CW-complex and $f,g\colon\Cells(X)\to \R^2$ be a pair of monotone functions.  For $t\in [0,1]$, define $h_t\colon \Cells(X)\to \R^2$ by linear interpolation between $f$ and $g$, i.e.,\[h_t(\sigma)=(1-t)f(\sigma)+tg(\sigma)\]
for all $\sigma\in \Cells(X)$.  As in the interpolation argument in the proof of  \cref{Thm:Main_Thm_Intro}\,(iv), there exists a finite set of real numbers \[0=t_0<t_1<t_2<\dots<t_{w+1}=1\]
such that for $i\in \{0,\ldots, w\}$ and any $s_i\in (t_i,t_{i+1})$, both $h_{t_i}$ and $h_{t_{i+1}}$ are $h_{s_i}$-compatible; we may take $t_1,\ldots,t_w$ to be the set of points $t\in (0,1)$ where the partial order of the values of $h_{t}$ changes as $t$ increases.  
Now \[\|f-g\|_p=\sum_{i=0}^w \|h_{t_i}-h_{s_i}\|_p+ \|h_{s_i}-h_{t_{i+1}}\|_p.\]
Thus, by the triangle inequality for $d_{\mcI}^p$, it suffices to prove the result in the special case that $g$ is $f$-compatible.  

Assuming that $g$ is $f$-compatible, it is easy to check that there exists a grid function $\G\colon \R^2\to \R^2$ such that $\mathcal S(g)= \Lan_{\G}(\mathcal S(f))$;  indeed, we may take $\G$ to be any grid function such that $\G(f(\sigma))=g(\sigma)$ for all $\sigma\in \Cells(X)$.  For such $\G$, we then also have 
\[\C_j(g)=\Lan_{\G}(\C_j(f))\quad \textup{and}\quad \partial_j^g=\Lan_{\G}(\partial_j^f)\]
for all $j$.   

By the way $\C(f)$ is defined, each $\C_j(f)$ comes equipped with a distinguished basis $B_j^{f}$.  We choose an arbitrary order on each $B_j^{f}$.  Let $B_j^g=\Lan_{\G}(B_j^f)$ be the corresponding basis of $\C_j(g)$, and let $Z^f$ be an ordered basis of $\ker \partial_j^f$.  \cref{Lemma:Stability_of_Kernel2}\,(i) provides an ordered basis $Z^g$ of $\ker \partial_j^g$ which is compatible with $Z^f$, in the sense that the inclusions $\ker \partial_j^f\hookrightarrow \C_j(f)$ and $\ker \partial_j^g\hookrightarrow \C_j(g)$ are represented with respect to the these bases by the same unlabeled matrix. 

Since $\C(f)$ and $\C(g)$ are chain complexes, we know that $\im \partial^f_{j+1}\subset \ker \partial_j^f$ and $\im \partial^g_{j+1}\subset \ker \partial_j^g$. Let $\tilde \partial^f_{j+1}$ and $\tilde \partial^g_{j+1}$ denote the respective restrictions of $\partial^f_{j+1}$ and $\partial^g_{j+1}$ to the codomains $\ker \partial^f_{j}$ and $\ker \partial^g_{j}$.  Let $P^f$ denote the the matrix representation of $\tilde \partial^f_{j+1}$  with respect to the bases $Z^f$ and $B_{j+1}^f$, and similarly, let $P^g$ denote the the matrix representation of $\tilde \partial^g_{j+1}$  with respect to $Z^g$ and $B_{j+1}^g$.  Clearly, $P^f$ and $P^g$ are presentations of $H_j\mathcal S(f)$ and $H_j\mathcal S(g)$, respectively, and it is easily checked that the two presentations have the same underlying matrix.

By \cref{Lemma:Stability_of_Kernel2}\,(ii), the $\ell^p$-distance on the row labels of $P^f$ and $P^g$ is at most the $\ell^p$-distance between the restrictions of $f$ and $g$ to the $j$-cells of $X$.  And the $\ell^p$-distance on the column labels of $P^f$ and $P^g$ is exactly the $\ell^p$-distance between the restrictions of $f$ and $g$ to the $(j+1)$-cells of $X$, because the column labels and function values agree.  Thus, \[d^p(P^f,P^g)\leq \|f^j-g^j\|_p,\] where $f^j$ denotes the restriction of $f$ to the $j$- and $(j+1)$-cells of $X$, and $g^j$ is defined analogously.  
Since \[d_{\mcI}^p(H_j\mathcal S(f),H_j\mathcal S(g))\leq d^p(P^f,P^g),\] we have that 
\begin{equation}\label{Eq:p-Stability}
d_{\mcI}^p(H_j\mathcal S (f),H_j\mathcal S(g)) \leq \|f^j-g^j\|_p\leq \|f-g\|_p,
\end{equation}
 which shows that $d_{\mcI}^p$ is $p$-stable.  

To see that $d_{\mcI}^p$ is also $p$-stable across degrees with constant $2^{\frac{1}{p}}$, let $\|f^*-g^*\|_p$ denote the sequence of real numbers 
\[\|f^0-g^0\|_p,\ \|f^1-g^1\|_p,\ \|f^2-g^2\|_p,\ \ldots\]
and simply note that 
\[d_{\mcI}^p(H_*\mathcal S (f),H_*\mathcal S(g))\leq \|f^*-g^*\|_p\leq 2^{\frac{1}{p}}\|f-g\|_p,\]
where the first inequality follows from the first inequality of \eqref{Eq:p-Stability}.

Finally, we consider the case $n=1$.  A simplification of the proof we have given shows that in this case, $d_{\mcI}^p$ is $p$-stable, and also $p$-stable across degrees with constant $2^{\frac{1}{p}}$.  Alternatively, these results can be shown via a reduction to the $n=2$ case.  To obtain the stronger result that $d_{\mcI}^p$ is $p$-stable across degrees with constant $1^{\frac{1}{p}}=1$, we need an additional idea, which we now outline.

Let $f\colon\Cells(X)\to \R$ be a monotone function.  Carlsson and Zomorodian \cite{zomorodian2005computing} have observed that a basis $B_j$ of each chain module $\C_j(f)$ can be chosen such that with respect to these bases, each matrix $[\partial^f_j]$ is in graded Smith normal form, i.e., $[\partial^f_j]$ has at most one non-zero entry in each row and each column.  The elements of $B_j$ corresponding to zero columns of $[\partial^f_j]$ then form a basis $B_j^{\ker}$ of $\ker \partial^f_j$, and the submatrix of $[\partial^f_{j+1}]$ consisting of columns  indexed by $B_{j+1}\setminus B_{j+1}^{\ker}$ and rows indexed by $B_{j}^{\ker}$ determines a presentation matrix $P_j$ for $H_{j}\mathcal S (f)$.  Note that for each $j$, the rows of $P_{j}$ are indexed by $B_j^{\ker}$ and the columns of $P_{j-1}$ are indexed by $B_{j}\setminus B_{j}^{\ker}$.  Thus, if one adapts our proof from the $n=2$ case to the $n=1$ case, using such presentations $P_j$ throughout, then it is easy to see that a change to the function value of a $j$-simplex in $\Cells(X)$ can induce a corresponding change to the label of either a row of $P_{j}$ or a column of $P_{j-1}$ but (in contrast to the 2-parameter case,) not to both simultaneously.  
Using this observation, our argument above for the 2-parameter case then adapts easily to give the claimed bound.
\end{proof}

\subsection{Tightness of \cref{Thm:Universality}\,(i)}
To establish our tightness result, we will need a generalization of the definition of $\delta$-interleavings from \cref{Sec:Interleavings}: For $v\in [0,\infty)^n$, define the \emph{$v$-interleaving category} $\I^v$ to be the thin category with object set $\R^n\times \{0,1\}$ and a morphism $(a,i)\to (b,j)$ if and only if either  
\begin{enumerate}
\item $a+v \leq b$, or
\item $i=j$ and $a\leq b$.
\end{enumerate}

\begin{definition}
A \emph{$v$-interleaving} between $\R^n$-persistence modules $M$ and $N$ is a functor
\[
Z\colon \I^v\to \Vect
\]
such that $Z\circ {\mathcal J}^0=M$ and $Z\circ {\mathcal J}^1=N$, where  ${\mathcal J}^0$ and ${\mathcal J}^1$ are as defined in \cref{Sec:Interleavings}.
\end{definition}

\begin{definition}
Let $d_\mcI^+$ be the distance on $\R^n$-persistence modules defined by 
\[d_\mcI^+(M,N) = \inf\,\left\{v_1+\cdots+ v_n \mid  \textup{$\exists$ a $v$-interleaving between $M$ and $N$}\right\}.\]
\end{definition}
This is indeed a distance: If $M$ and $M'$ are $(v_1,\dots,v_n)$-interleaved, and $M'$ and $M''$ are $(v'_1,\dots,v'_n)$-interleaved, then $M$ and $M''$ are $(v_1+v'_1,\dots,v_n+v'_n)$-interleaved, which implies the triangle inequality.

\begin{lemma}\label{Lem:Tightness}
If $M$ and $N$ are finitely presented $\R^n$-persistence modules, then
\[d_\mcI^+(M,N) \leq n^{1-\frac{1}{p}} d_\mcI^p(M,N)\]
for all $p\in [1,\infty]$.
\end{lemma}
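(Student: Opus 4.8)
The plan is to reduce the claimed inequality to a single-pair statement about presentation matrices, via \cref{Prop:Largest_Lower_Bound}\,(ii), and then to prove that statement by exhibiting an explicit interleaving together with an elementary norm estimate. Since $d_\mcI^+$ is a distance and $n\geq 1$ (so $n^{1-1/p}>0$), the function $\tfrac{1}{n^{1-1/p}}\,d_\mcI^+$ is again a distance, and by \cref{Prop:Largest_Lower_Bound}\,(ii) it suffices to show $d_\mcI^+(M,N)\leq n^{1-1/p}\,\hat d_\mcI^p(M,N)$ for all finitely presented $M,N$. For this I would fix an arbitrary pair $(P_M,P_N)\in P_{M,N}$, with common underlying $r\times c$ matrix $T$ and label vectors $\ell^M=\labels(P_M),\ \ell^N=\labels(P_N)$ in $(\R^n)^{r+c}$, and bound $d_\mcI^+(M,N)$ by $n^{1-1/p}\,d^p(P_M,P_N)$; taking the infimum over $P_{M,N}$ then finishes the argument.

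The core step is to produce, for the vector $v\in[0,\infty)^n$ with $v_k=\max_i\lvert\ell^M_{i,k}-\ell^N_{i,k}\rvert$, a $v$-interleaving between $M$ and $N$; this is the natural extension of the (omitted) easy half of \cref{Prop:Presentation_Interleaving_Equality}. Let $\gamma_M\colon F_M\to G_M$ and $\gamma_N\colon F_N\to G_N$ be the presentations encoded by $P_M$ and $P_N$. The choice of $v$ ensures coordinatewise that $\gr(b)-v\leq\gr(b')$ and $\gr(b')-v\leq\gr(b)$ whenever $b,b'$ are corresponding basis elements of $G_M,G_N$, and similarly for $F_M,F_N$. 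Hence, writing $N(v)_a=N_{a+v}$, the identity matrices $I_r,I_c$ represent canonical morphisms $G_M\to G_N(v)$ and $F_M\to F_N(v)$, and likewise $G_N\to G_M(v)$ and $F_N\to F_M(v)$. Since morphisms of free modules compose by matrix multiplication and the shift $\gamma_N(v)\colon F_N(v)\to G_N(v)$ is again represented by $T$, these maps intertwine $\gamma_M$ with $\gamma_N(v)$ and $\gamma_N$ with $\gamma_M(v)$, so they descend to cokernels to give natural transformations $\phi\colon M\to N(v)$ and $\psi\colon N\to M(v)$. The composites $\psi(v)\circ\phi$ and $\phi(v)\circ\psi$ are represented by identity matrices, hence equal the structure maps $M\to M(2v)$ and $N\to N(2v)$, so $(\phi,\psi)$ is a $v$-interleaving (equivalently, a functor $\I^v\to\Vect$ restricting to $M$ and $N$). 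Therefore $d_\mcI^+(M,N)\leq v_1+\cdots+v_n$.

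Finally I would bound $\sum_k v_k$. Set $a_{i,k}=\ell^M_{i,k}-\ell^N_{i,k}$ and, for $p<\infty$, $w_k=\bigl(\sum_i\lvert a_{i,k}\rvert^p\bigr)^{1/p}$; then $v_k=\max_i\lvert a_{i,k}\rvert\leq w_k$, so $\sum_k v_k\leq\lVert w\rVert_1\leq n^{1-1/p}\lVert w\rVert_p=n^{1-1/p}\bigl(\sum_{i,k}\lvert a_{i,k}\rvert^p\bigr)^{1/p}=n^{1-1/p}\lVert\ell^M-\ell^N\rVert_p$, using the standard inequality $\lVert x\rVert_1\leq n^{1-1/p}\lVert x\rVert_p$ on $\R^n$; the case $p=\infty$, where the bound reads $\sum_k v_k\leq n\lVert\ell^M-\ell^N\rVert_\infty$, is immediate. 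Combining with the previous paragraph gives $d_\mcI^+(M,N)\leq n^{1-1/p}\,d^p(P_M,P_N)$, as needed. I expect the only mildly delicate point to be the bookkeeping in the interleaving construction — checking the coordinatewise grade inequalities and that the composites of the induced natural transformations really are the structure maps — but as this is a routine adaptation of known arguments and the genuinely new ingredient is just the elementary $\ell^1$-versus-$\ell^p$ inequality, I do not anticipate a serious obstacle.
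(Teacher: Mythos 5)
Your proof is correct and follows essentially the same route as the paper: reduce to $\hat d_\mcI^p$ via \cref{Prop:Largest_Lower_Bound}\,(ii), extract a vector $v$ of coordinatewise maximal label discrepancies, produce a $v$-interleaving, and finish with the standard $\ell^1$-versus-$\ell^p$ comparison on $\R^n$. The paper simply asserts that ``it is easy to check that $M$ and $N$ are $v$-interleaved'' where you spell out the construction of the interleaving from the pair of presentation matrices; your added detail is accurate and a reasonable elaboration of what the paper leaves implicit.
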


\begin{proof}
We will prove the inequality with $d_\mcI^p$ replaced by $\hat d_\mcI^p$.  Given this, $n^{\frac{1}{p}-1}d_\mcI^+$ is a distance bounded above by $\hat d_\mcI^p$, so the inequality in the lemma follows from \cref{Prop:Largest_Lower_Bound}\,(ii).

Suppose $P$ and $P'$ are presentations of $M$ and $N$ with the same underlying $r\times c$ matrix. Recall that by definition, \[d^p(P,P') = \|\labels(P)- \labels(P')\|_p,\] where we regard $\labels(P) - \labels(P')$ as a function from $\{1,2,\ldots,r+c\}$ to $\R^n$.  Let $L_i$ denote the $i^{\mathrm{th}}$ component of this function, and 
define $v \in \R^n$  by $v_i =\max |L_i|$.  
Observe that for any $j\in \{1,2,\ldots,r+c\}$, we have 
\[-v\leq \labels(P)_j - \labels(P')_j\leq v.\] 
Using this, it is easy to check that $M$ and $N$ are $v$-interleaved, which gives \[d_\mcI^+(M,N)\leq \sum_{i=1}^n v_i.\] Moreover, we have
\begin{align*}
\sum_{i=1}^n v_i &= \|v\|_1\\
&\leq n^{1-\frac{1}{p}}\|v\|_p\\
&\leq n^{1-\frac{1}{p}}\|\labels(P) - \labels(P')\|_p\\
&= n^{1-\frac{1}{p}}d^p(P,P'),
\end{align*}
where the first inequality follows from the standard relation between $p$-norms on $\R^n$. Since $P$ and $P'$ were arbitrary, this finishes the proof.
\end{proof}

\begin{proposition}\label{Prop:Tightness}
For all $p\in [1,\infty]$ and $n\in \{1,2\}$, $d_\mcI^p$ is not $p$-stable across degrees with constant $c$ on $\R^n$-persistence modules for any $c<n^\frac{1}{p}$.
\end{proposition}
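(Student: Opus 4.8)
The plan is to show, for each constant $c<n^{\frac1p}$, that $d_\mcI^p$ fails $p$-stability across degrees with constant $c$ by exhibiting one finite CW-complex and monotone functions whose ratio $\|d_\mcI^p(H_*\mathcal S(f),H_*\mathcal S(g))\|_p/\|f-g\|_p$ equals $n^{\frac1p}$. For $n=2$ I would reuse the functions $f,g\colon\Cells(X)\to\R^2$ of \cref{Ex:Stability}: there $\|f-g\|_p=\|(1,1)\|_p=2^{\frac1p}$, only $H_0$ and $H_1$ of the sublevel filtrations are nonzero, and \cref{Ex:Stability} already records the upper bounds $d_\mcI^p(H_j\mathcal S(f),H_j\mathcal S(g))\le2^{\frac1p}$ for $j\in\{0,1\}$. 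So the remaining work is the matching lower bounds $d_\mcI^p(H_j\mathcal S(f),H_j\mathcal S(g))\ge2^{\frac1p}$; granting them, $d_\mcI^p(H_*\mathcal S(f),H_*\mathcal S(g))=(2^{\frac1p},2^{\frac1p},0,0,\dots)$, whose $p$-norm is $4^{\frac1p}=n^{\frac1p}\|f-g\|_p>c\|f-g\|_p$.

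To obtain these lower bounds I would pass through $d_\mcI^+$: \cref{Lem:Tightness} with $n=2$ gives $d_\mcI^+(M,N)\le2^{1-\frac1p}d_\mcI^p(M,N)$, so it suffices to prove $d_\mcI^+(H_j\mathcal S(f),H_j\mathcal S(g))\ge2$ for $j\in\{0,1\}$ (which forces $d_\mcI^p\ge 2^{\frac1p}$); equivalently, that every $v$-interleaving between these modules, with $v=(v_1,v_2)\in[0,\infty)^2$, has $v_1+v_2\ge2$. The main tool is the standard fact that a $v$-interleaving between $M$ and $N$ yields rank inequalities $\rank(M_{a\subto b})\le\rank(N_{(a+v)\subto(b-v)})$ whenever $a\le b$ and $a+v\le b-v$, together with the symmetric statement. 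For $j=1$, \cref{Ex:Stability} identifies $H_1\mathcal S(f)$ and $H_1\mathcal S(g)$ as the free modules on $\{(3,4),(4,3)\}$ and $\{(2,4),(4,2)\}$; their internal maps are injective, so $\rank(M_{a\subto b})=\dim M_a$, and plugging $a=(2,4)$ and then $a=(4,2)$ into the rank inequality forces $v_1\ge1$ and $v_2\ge1$.

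The hard part is $j=0$, where the internal maps of $H_0\mathcal S(f)$ are not injective and dimension counting alone is too weak. The key observation is that in this example every $1$-cell joins the two $0$-cells, and both $0$-cells sit at grade $(0,0)$; hence for $a\le b$ with $a\ge(0,0)$ the map $(H_0\mathcal S(f))_{a\subto b}$ is surjective, so its rank equals $\dim(H_0\mathcal S(f))_b$ — the rank depends only on the target. Concretely, $\dim(H_0\mathcal S(f))_b$ is $2$ when $b\ge(0,0)$ and $b$ is $\ge$ none of $(1,4),(3,3),(4,1)$, and is $1$ when $b\ge(0,0)$ and $b\ge$ one of them; similarly for $g$ with $(3,3)$ replaced by $(2,2)$. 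Now suppose $v_1+v_2<2$, so $v_1,v_2<2$ and at least one of $v_1,v_2$ is $<1$. Apply the rank inequality at $a=(0,0)$ and $b=(v_1+2,v_2+2)$ (valid since $a+v=v\le(2,2)=b-v$): the left side is $\dim(H_0\mathcal S(f))_b=2$, because $b\ge(3,3)$ would require both $v_1\ge1$ and $v_2\ge1$ while $b\ge(1,4)$ or $b\ge(4,1)$ would require $v_2\ge2$ or $v_1\ge2$; the right side is $\dim(H_0\mathcal S(g))_{(2,2)}=1$, since $(2,2)\ge(2,2)$. This contradiction yields $d_\mcI^+(H_0\mathcal S(f),H_0\mathcal S(g))\ge2$, completing the $n=2$ case.

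For $n=1$ we have $n^{\frac1p}=1$, and I would simply take $X$ with two $0$-cells joined by a single $1$-cell, $f$ sending both $0$-cells to $0$ and the edge to $1$, and $g$ equal to $f$ except $g(\mathrm{edge})=2$; then $\|f-g\|_p=1$, all positive-degree homology vanishes, and $H_0\mathcal S(f),H_0\mathcal S(g)$ have barcodes $\{[0,\infty),[0,1)\}$ and $\{[0,\infty),[0,2)\}$, so \cref{Thm:Main_Thm_Intro}\,(iv) gives $d_\mcI^p(H_0\mathcal S(f),H_0\mathcal S(g))=d_{\W}^p=1$. Hence $\|d_\mcI^p(H_*\mathcal S(f),H_*\mathcal S(g))\|_p=1=n^{\frac1p}\|f-g\|_p>c\|f-g\|_p$, so in both dimensions the constant $n^{\frac1p}$ cannot be lowered. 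The only genuinely delicate step is the $j=0$, $n=2$ lower bound, precisely because it requires upgrading from dimension counts to ranks of internal maps and using the combinatorics of the example to pin those ranks down.
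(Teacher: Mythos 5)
Your proposal is correct and takes essentially the same route as the paper: for $n=2$ it reuses \cref{Ex:Stability} and \cref{Lem:Tightness}, bounding $d_\mcI^+$ below by $2$ via rank arguments for $H_0$ and $H_1$ exactly as the paper does (your choice of $a=(0,0)$, $b=(2,2)+v$ for $H_0$ is a harmless variant of the paper's $(2,2)-v\subto(2,2)+v$, and your surjectivity observation just makes the rank-two claim explicit). The only difference is your $n=1$ example (an edge joining two vertices, resolved via \cref{Thm:Main_Thm_Intro}\,(iv)) versus the paper's single $0$-cell, but both are one-line computations yielding the same conclusion.
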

\begin{proof}
Note that $d_{\mcI}^+=d_{\mcI}$ on $\R$-persistence modules.  Thus, \cref{Lem:Tightness} implies that for any functions $f,g:X\to \R$ on a CW-complex $X$ consisting only of a single $0$-cell, 
\[d_\mcI^p(H_0\mathcal S(f),H_0\mathcal S(g))\geq d_\mcI(H_0\mathcal S(f),H_0\mathcal S(g))= \|f-g\|_p.\]
Taking $f\ne g$, the case $n=1$ of the result follows.

For the $n=2$ case, consider the filtrations $f$ and $g$ of \cref{Ex:Stability}.  Note that $\|f-g\|_p = \|(1,1)\|_p$. We claim that 
$d_\mcI^+(H_i\mathcal S(f),H_i\mathcal S(g))\geq 2$ for both $i=0$ and $i=1$. For $i=1$, this follows from the observation that $H_1\mathcal S(f)$ and $H_1\mathcal S(g)$ are not $v$-interleaved for any $v\ngeq(1,1)$. For $i=0$, observe that if $v=(v_1,v_2)$ and $v_1+v_2<2$, then $H_0\mathcal S(f)_{(2,2)-v\subto (2,2)+v}$ has rank two. Thus, if $H_0\mathcal S(f)$ and $H_0\mathcal S(g)$ are $v$-interleaved, then $H_0\mathcal S(g)_{(2,2)}$ must have dimension at least two, which is false.  
Applying \cref{Lem:Tightness}, we get $d_\mcI^p(H_i\mathcal S(f),H_i\mathcal S(g))\geq 2^\frac{1}{p}$. Thus,
\[\|d_\mcI^p(H_*\mathcal{S}(f),H_*\mathcal{S}(g))\|_p \geq \|(2^\frac{1}{p},2^\frac{1}{p})\|_p = 2^\frac{1}{p}\|f-g\|_p.\qedhere\]
\end{proof}

\subsection{Proof of Universality}
Our proof of \cref{Thm:Universality}\,(ii) is similar to the proof of \cref{Thm:Stability_and_Universality_of_The_Interleaving_Distance}\,(ii)  given in \cite{lesnick2015theory}, though simpler.  As with that result (and most similar universality results in the TDA literature, e.g., \cite{d2010natural,bauer2020universality,bauer2020reeb,blumberg2017universality}) the argument depends on a lifting result, which we now state:

\begin{lemma}\label{Lem:Universality_Lifting}
Let $M$ and $N$ be finitely presented bipersistence modules with coefficients in a prime field $k$.  Given presentation matrices $P_M$ and $P_N$ for $M$ and $N$ with the same unlabeled matrix, there exist a CW-complex $X$ and functions $f^M,f^N\colon \Cells(X)\to \R^2$ such that 
\begin{itemize}
\item $M\cong H_1\mathcal S(f^M)$ and $N\cong H_1\mathcal S(f^N)$,
\item $d^p(P_M,P_N)=\|f^M-f^N\|_p$.
\end{itemize}
\end{lemma}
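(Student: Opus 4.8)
The plan is to realize $M$ and $N$ as the first sublevel homology modules of two monotone functions on a single finite CW-complex $X$ built from the common underlying matrix of $P_M$ and $P_N$; this is a graded version of the standard presentation-complex construction, and is close in spirit to the lifting argument used in \cite{lesnick2015theory} for \cref{Thm:Stability_and_Universality_of_The_Interleaving_Distance}\,(ii), but more transparent here. Write $P=(P_{ij})$ for the common underlying $r\times c$ matrix, and write $\rho_1,\dots,\rho_r$ (resp.\ $\kappa_1,\dots,\kappa_c$) for the row (resp.\ column) labels of $P_M$, and $\rho'_1,\dots,\rho'_r,\kappa'_1,\dots,\kappa'_c$ for those of $P_N$. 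Since $k$ is a prime field, we may fix an integer matrix $\widetilde P=(\widetilde P_{ij})$ with $\widetilde P_{ij}\cdot 1_k=P_{ij}$ for all $i,j$ (when $k=\mathbb Q$, first replace $P$ by an integer-entry presentation matrix for the same modules, obtained by rescaling rows and columns — a harmless change of basis, always possible since only finitely many primes occur in the denominators).

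First I would define $X$ to be the finite CW-complex with a single $0$-cell $v$; one $1$-cell $e_i$ for each $i\in\{1,\dots,r\}$, attached at both endpoints to $v$, so that the $1$-skeleton is a wedge of $r$ circles; and one $2$-cell $\varepsilon_j$ for each $j\in\{1,\dots,c\}$, attached along a loop in the $1$-skeleton representing the word $\prod_i e_i^{\widetilde P_{ij}}$ in $\pi_1$. With these attaching maps, the cellular boundary maps of $X$ are $\partial_1=0$ (each $e_i$ has both ends at $v$) and $\partial_2\varepsilon_j=\sum_i \widetilde P_{ij}\,e_i$. Next, with $w\in\R^2$ the coordinatewise minimum of all the labels $\rho_i,\kappa_j,\rho'_i,\kappa'_j$, I would define $f^M,f^N\colon\Cells(X)\to\R^2$ by $f^M(v)=f^N(v)=w$, $f^M(e_i)=\rho_i$, $f^N(e_i)=\rho'_i$, $f^M(\varepsilon_j)=\kappa_j$, and $f^N(\varepsilon_j)=\kappa'_j$. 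Monotonicity of $f^M$ (and of $f^N$) is then automatic: the attaching map of $e_i$ meets only $v$ and $w\le\rho_i$; the attaching map of $\varepsilon_j$ meets $e_i$ only when $\widetilde P_{ij}\ne 0$, i.e.\ only when $P_{ij}\ne 0$, and then $\kappa_j\ge\rho_i$ because $P_M$ is a genuine presentation matrix, while it always meets $v$ and $w\le\kappa_j$.

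It then remains to identify the homology modules and compute the distance. By \cref{Def:Chain_Complex}, the chain complex $\C(f^M)$ has $\C_0(f^M)=Q^w$, $\C_1(f^M)=\bigoplus_i Q^{\rho_i}$, $\C_2(f^M)=\bigoplus_j Q^{\kappa_j}$, with $\partial^{f^M}_1=0$ and $\partial^{f^M}_2$ the morphism whose matrix with respect to the evident bases is $P$ with row labels $\rho_i$ and column labels $\kappa_j$ — that is, exactly the morphism presented by $P_M$. Hence $H_1\mathcal S(f^M)=H_1\C(f^M)=\coker\partial^{f^M}_2\cong M$, and likewise $H_1\mathcal S(f^N)\cong N$. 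Finally, since $f^M(v)=f^N(v)$, the cells on which $f^M$ and $f^N$ differ are the $e_i$ and $\varepsilon_j$, whose values are precisely the row and column labels of $P_M$ and $P_N$; therefore
\[
\|f^M-f^N\|_p^p \;=\; \sum_{i=1}^r \|\rho_i-\rho'_i\|_p^p \;+\; \sum_{j=1}^c \|\kappa_j-\kappa'_j\|_p^p \;=\; \|\labels(P_M)-\labels(P_N)\|_p^p \;=\; d^p(P_M,P_N)^p,
\]
with the obvious modification when $p=\infty$, which finishes the argument.

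The construction itself is largely bookkeeping; the one genuinely load-bearing hypothesis is that $k$ is a prime field, which is exactly what lets us replace the $k$-valued entries of $P$ by integer winding numbers $\widetilde P_{ij}$ so that the attached $2$-cells realize $\partial_2$ — this step fails over a general field, and is where I expect the main subtlety to lie. The only other points needing care are the verification of monotonicity (automatic precisely because valid presentation matrices have column label $\ge$ row label at every nonzero entry) and the check that the sublevel homology agrees with $M$ at every grade, for which it is essential that $P_M$ and $P_N$ were assumed to share an underlying matrix, so that one complex $X$ serves for both.
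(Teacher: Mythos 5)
Your proposal is correct and follows essentially the same route as the paper's proof: both build a CW-complex with a single vertex, $r$ $1$-cells attached as a wedge of circles (one per row), and $c$ $2$-cells whose attaching words realize an integer lift of the common underlying matrix as $\partial_2$; both set the vertex value to the coordinatewise minimum of all labels and read off the remaining cell values from the row/column labels; both use primality of $k$ to lift $T$ to an integer matrix (for $\mathbb{F}_p$ take representatives in $\{0,\dots,p-1\}$; for $\mathbb{Q}$ clear denominators by column scaling, a unit change of basis). Your write-up is slightly more detailed than the paper's on monotonicity and on identifying $H_1\mathcal S(f^M)$ with $\coker\partial_2^{f^M}\cong M$, and your remark that validity of the presentation matrix forces $\kappa_j\geq\rho_i$ at nonzero entries is exactly the right observation; the paper leaves these verifications to the reader. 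The only cosmetic difference is that for $k=\mathbb{Q}$ the paper rescales only columns (which suffices to clear denominators while keeping the underlying matrix unchanged), whereas you mention rescaling rows as well — harmless but unnecessary.
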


\begin{proof}
Let $T$ denote the unlabeled matrix underlying $P_M$ and $P_N$, and assume that $T$ has $r$ rows and $c$ columns.  Let $X$ be a CW-complex with a single vertex $v$, 1-cells $\sigma_1,\ldots,\sigma_r$, and 2-cells $\tau_1,\ldots, \tau_c$, with the $2$-cells attached so as to satisfy the following conditions: If $k$ is finite, then the degree-2 cellular boundary matrix $\partial_2$ is equal to $T$; and if $k=\mathbb Q$, then $\partial_2$ is obtained from $T$ by multiplying each column of $T$ by an integer.  

We define $f^M,f^N\colon \Cells(X)\to \R^2$ by letting \[f^M(v)=f^N(v)=\min(\im \labels(P_M) \cup \im \labels(P_N)),\] and (using the notation for row and column labels introduced above \cref{Prop:SNF}) letting
\[
\begin{aligned}[t]
f^M(\sigma_i)&=\labels(P_M)_{i*},\\
f^M(\tau_j)&=\labels(P_M)_{*j},
\end{aligned}
\quad
\begin{aligned}[t]
f^N(\sigma_i)&=\labels(P_N)_{i*}\\
f^N(\tau_j)&=\labels(P_N)_{*j}
\end{aligned}
\quad 
\begin{aligned}[t]
 \textup{ for $i\in \{1,\dots,r\}$},\\
  \textup{ for $j\in \{1,\dots,c\}$}.
  \end{aligned}
\]
It is easy to see that $M\cong H_1(f^M)$, $N\cong H_1(f^N)$, and $d^p(P_M,P_N)=\|f^M-f^N\|_p$.
\end{proof}   
   
\begin{proof}[Proof of \cref{Thm:Universality}\,(ii)]
We give the proof in the case $n=2$; the proof when $n=1$ is essentially the same.

Let $d$ be a $p$-stable distance on finitely presented bipersistence modules. We need to show that $d\leq d_{\mcI}^p$.  By \cref{Prop:Largest_Lower_Bound}\,(ii), it in fact suffices to show that $d\leq \hat d_{\mcI}^p$. Consider two finitely presented bipersistence modules $M$ and $N$ such that $\hat d_{\mcI}^p(M,N)<\infty$.  For all $\delta> \hat d_{\mcI}^p(M,N)$, there exist presentations $P_M$ and $P_N$ for $M$ and $N$ with $d^p(P_M, P_N)\leq \delta$.  By \cref{Lem:Universality_Lifting} and the $p$-stability of $d$, we have that $d(M,N)\leq \delta$.  Since this holds for all $\delta>\hat d_{\mcI}^p(M,N)$, we see that $d(M,N)\leq \hat d_{\mcI}^p(M,N)$.  
\end{proof}

\begin{remark}[Universality for Non-Prime Fields]\label{Rem:Non_Prime_Universality}
As noted in the introduction, the condition that the underlying field $k$ be prime is in fact unnecessary for the 1-parameter version of \cref{Thm:Universality}\,(ii).  In the 2-parameter case, the question of whether \cref{Thm:Universality}\,(ii) holds for arbitrary fields $k$ is an interesting open question.  In fact, the analogous question about the interleaving distance on multiparameter persistence modules is also open \cite{lesnick2015theory}.
\end{remark}

\begin{remark}[Stability and Universality for Three or More Parameters]\label{Rem:Three_Parameters_or_More}
Our proof of the stability result \cref{Thm:Universality}\,(i) makes essential use of \cref{Lem:Free_Kernels}, which says that kernels of morphisms between finitely generated free bipersistence modules are free.  \cref{Lem:Free_Kernels} does not hold for 3-parameter persistence modules, and as such, we do not expect the 1-Lipchitz stability bound \cref{Thm:Universality}\,(i) to extend to $n$-parameter persistence modules, for $n\geq 3$.  However, we conjecture that for any fixed $n\geq 3$, a weaker Lipschitz stability bound does hold for $n$-parameter persistence modules.  It is not clear whether one would have a universality result in the general setting.
\end{remark}

\section{Conclusion}
In this work, we have introduced two multiparameter generalizations of the $p$-Wasserstein distance on barcodes, the $p$-presentation distance and the $p$-matching distance, which for  $p=\infty$ are equal to the interleaving distance and the matching distance, respectively.  We have shown that several fundamental properties of the Wasserstein, interleaving, and matching distances extend to our new distances.   As explained in \cref{Sec:Multicover_Outline}, in a forthcoming companion paper \cite{bjerkevikMulticover} we apply our distances to refine the stability theory for multicover persistent homology developed in \cite{blumberg2020stability}.  

Our work raises a number of mathematical questions that would be worth exploring in future work.  We have already mentioned several of these in \cref{Conj:Computational_Conjectures}, \cref{Rem:Non_Prime_Universality}, and \cref{Rem:Three_Parameters_or_More}.  We conclude by discussing three others.

First, one can define the interleaving distance not only on persistence modules, but also on (multi)filtered topological spaces.  Modifying the definition slightly, one can also define a homotopy invariant version \cite{blumberg2017universality}.  Both versions satisfy universal properties analogous to the one for modules \cite{blumberg2017universality,lesnick2012multidimensional}.  Given our results, it is natural to wonder whether we can define $\ell^p$-type generalizations of these distances  which satisfy similar universality properties. 

Second, a natural direction would be to extend our 2-parameter stability result to the \emph{multicritical} setting, where instead of considering sublevel filtrations of monotone functions on a CW-complex, we consider bifiltrations on a CW-complex in which a cell can be born at multiple incomparable grades.  While many interesting cellular bifiltrations do arise as sublevel filtrations, e.g., the function-Rips bifiltrations \cite{carlsson2009theory}, rhomboid bifiltrations  \cite{corbet2021computing}, and subdivision bifiltrations \cite{sheehy2012multicover,blumberg2020stability}, others of interest in applications, such as the degree bifiltrations \cite{lesnick2015interactive,blumberg2020stability}, are multicritical.  

Third, our main results are stated only for finitely presented modules.  The problem of extending them to a larger class of modules, e.g., to \pfd modules, is interesting.  To obtain such extensions, one would first need to extend the definition of the $p$-presentation distance, and the question of how to best do this seems to be a subtle one.  

\bibliographystyle{abbrv}
\bibliography{WI_Refs}

\begin{thebibliography}{10}

\bibitem{bauer2020universality}
U.~Bauer, M.~B. Botnan, and B.~Fluhr.
\newblock Universality of the bottleneck distance for extended persistence
  diagrams.
\newblock {\em arXiv preprint arXiv:2007.01834}, 2020.

\bibitem{bauer2020reeb}
U.~Bauer, C.~Landi, and F.~M{\'e}moli.
\newblock The reeb graph edit distance is universal.
\newblock {\em Foundations of Computational Mathematics}, pages 1--24, 2020.

\bibitem{bauer2015induced}
U.~Bauer and M.~Lesnick.
\newblock Induced matchings and the algebraic stability of persistence
  barcodes.
\newblock {\em Journal of Computational Geometry}, 6(2):162--191, 2015.

\bibitem{bauer2016persistence}
U.~Bauer and M.~Lesnick.
\newblock Persistence diagrams as diagrams: A categorification of the stability
  theorem.
\newblock In {\em Topological Data Analysis}, pages 67--96. Springer, 2020.

\bibitem{belchi2018lung}
F.~Belchi, M.~Pirashvili, J.~Conway, M.~Bennett, R.~Djukanovic, and J.~Brodzki.
\newblock Lung topology characteristics in patients with chronic obstructive
  pulmonary disease.
\newblock {\em Scientific reports}, 8(1):5341, 2018.

\bibitem{beltramo2021euler}
G.~Beltramo, R.~Andreeva, Y.~Giarratano, M.~O. Bernabeu, R.~Sarkar, and
  P.~Skraba.
\newblock Euler characteristic surfaces.
\newblock {\em arXiv preprint arXiv:2102.08260}, 2021.

\bibitem{berwald2014critical}
J.~Berwald and M.~Gidea.
\newblock Critical transitions in a model of a genetic regulatory system.
\newblock {\em Mathematical Biosciences \& Engineering}, 11(4):723--740, 2014.

\bibitem{betancourt2018pseudo}
C.~Betancourt, M.~Chalifour, R.~Neville, M.~Pietrosanu, M.~Tsuruga, I.~Darcy,
  and G.~Heo.
\newblock Pseudo-multidimensional persistence and its applications.
\newblock In {\em Research in Computational Topology}, pages 179--202.
  Springer, 2018.

\bibitem{biasotti2011new}
S.~Biasotti, A.~Cerri, P.~Frosini, and D.~Giorgi.
\newblock A new algorithm for computing the 2-dimensional matching distance
  between size functions.
\newblock {\em Pattern Recognition Letters}, 32(14):1735--1746, 2011.

\bibitem{biwer2017windowed}
C.~Biwer, A.~Rothberg, H.~IglayReger, H.~Derksen, C.~F. Burant, and
  K.~Najarian.
\newblock Windowed persistent homology: A topological signal processing
  algorithm applied to clinical obesity data.
\newblock {\em PloS one}, 12(5), 2017.

\bibitem{bjerkevik2016stability}
H.~B. Bjerkevik.
\newblock On the stability of interval decomposable persistence modules.
\newblock {\em Discrete \& Computational Geometry}, pages 1--30, 2021.

\bibitem{bjerkevik2019computing}
H.~B. Bjerkevik, M.~B. Botnan, and M.~Kerber.
\newblock Computing the interleaving distance is {NP}-hard.
\newblock {\em Foundations of Computational Mathematics}, pages 1--35, 2019.

\bibitem{bjerkevikMulticover}
H.~B. Bjerkevik and M.~Lesnick.
\newblock $\ell_p$-continuity properties of multicover persistent homology.
\newblock {\em In Preparation}.

\bibitem{blumberg2017universality}
A.~J. Blumberg and M.~Lesnick.
\newblock Universality of the homotopy interleaving distance.
\newblock {\em arXiv preprint arXiv:1705.01690}, 2017.

\bibitem{blumberg2020stability}
A.~J. Blumberg and M.~Lesnick.
\newblock Stability of 2-parameter persistent homology.
\newblock {\em arXiv preprint arXiv:2010.09628}, 2020.

\bibitem{botnan2018algebraic}
M.~Botnan and M.~Lesnick.
\newblock Algebraic stability of zigzag persistence modules.
\newblock {\em Algebraic \& geometric topology}, 18(6):3133--3204, 2018.

\bibitem{botnan2021signed}
M.~B. Botnan, S.~Oppermann, and S.~Oudot.
\newblock Signed barcodes for multi-parameter persistence via rank
  decompositions and rank-exact resolutions.
\newblock {\em arXiv preprint arXiv:2107.06800}, 2021.

\bibitem{bramer2020atom}
D.~Bramer and G.-W. Wei.
\newblock Atom-specific persistent homology and its application to protein
  flexibility analysis.
\newblock {\em Computational and Mathematical Biophysics}, 8(1):1--35, 2020.

\bibitem{brucale2002size}
A.~Brucale, M.~d'Amico, M.~Ferri, L.~Gualandri, and A.~Lovato.
\newblock Size functions for image retrieval: A demonstrator on randomly
  generated curves.
\newblock In {\em International Conference on Image and Video Retrieval}, pages
  235--244. Springer, 2002.

\bibitem{bubenik2015metrics}
P.~Bubenik, V.~De~Silva, and J.~Scott.
\newblock Metrics for generalized persistence modules.
\newblock {\em Foundations of Computational Mathematics}, 15(6):1501--1531,
  2015.

\bibitem{bubenik2018wasserstein}
P.~Bubenik, J.~Scott, and D.~Stanley.
\newblock An algebraic wasserstein distance for generalized persistence
  modules.
\newblock {\em arXiv preprint arXiv:1809.09654}, 2018.

\bibitem{bubenik2014categorification}
P.~Bubenik and J.~A. Scott.
\newblock Categorification of persistent homology.
\newblock {\em Discrete \& Computational Geometry}, 51(3):600--627, 2014.

\bibitem{cang2018representability}
Z.~Cang, L.~Mu, and G.-W. Wei.
\newblock Representability of algebraic topology for biomolecules in machine
  learning based scoring and virtual screening.
\newblock {\em PLoS computational biology}, 14(1):e1005929, 2018.

\bibitem{cang2020evolutionary}
Z.~Cang, E.~Munch, and G.-W. Wei.
\newblock Evolutionary homology on coupled dynamical systems with applications
  to protein flexibility analysis.
\newblock {\em Journal of Applied and Computational Topology}, 4(4):481--507,
  2020.

\bibitem{carlsson2009topology}
G.~Carlsson.
\newblock Topology and data.
\newblock {\em American Mathematical Society}, 46(2):255--308, 2009.

\bibitem{carlsson2009theory}
G.~Carlsson and A.~Zomorodian.
\newblock {The theory of multidimensional persistence}.
\newblock {\em Discrete and Computational Geometry}, 42(1):71--93, 2009.

\bibitem{carlsson2004persistence}
G.~Carlsson, A.~Zomorodian, A.~Collins, and L.~Guibas.
\newblock {Persistence barcodes for shapes}.
\newblock In {\em Proceedings of the 2004 Eurographics/ACM SIGGRAPH symposium
  on Geometry processing}, pages 124--135. ACM, 2004.

\bibitem{carriere2020multiparameter}
M.~Carri{\`e}re and A.~Blumberg.
\newblock Multiparameter persistence image for topological machine learning.
\newblock {\em Advances in Neural Information Processing Systems}, 33, 2020.

\bibitem{cavanna2015geometric}
N.~J. Cavanna, M.~Jahanseir, and D.~R. Sheehy.
\newblock A geometric perspective on sparse filtrations.
\newblock In {\em Proceedings of the Canadian Conference on Computational
  Geometry}, 2015.

\bibitem{cerri2013betti}
A.~Cerri, B.~Di~Fabio, M.~Ferri, P.~Frosini, and C.~Landi.
\newblock Betti numbers in multidimensional persistent homology are stable
  functions.
\newblock {\em Mathematical Methods in the Applied Sciences},
  36(12):1543--1557, 2013.

\bibitem{cerri2019geometrical}
A.~Cerri, M.~Ethier, and P.~Frosini.
\newblock On the geometrical properties of the coherent matching distance in 2d
  persistent homology.
\newblock {\em Journal of Applied and Computational Topology}, 3(4):381--422,
  2019.

\bibitem{chacholski2017combinatorial}
W.~Chach{\'o}lski, M.~Scolamiero, and F.~Vaccarino.
\newblock Combinatorial presentation of multidimensional persistent homology.
\newblock {\em Journal of Pure and Applied Algebra}, 221(5):1055--1075, 2017.

\bibitem{chazal2009proximity}
F.~Chazal, D.~Cohen-Steiner, M.~Glisse, L.~J. Guibas, and S.~Y. Oudot.
\newblock Proximity of persistence modules and their diagrams.
\newblock In {\em Proceedings of the 25th Annual Symposium on Computational
  Geometry}, SCG~'09, pages 237--246, New York, NY, USA, 2009. ACM.

\bibitem{chazal2009gromov}
F.~Chazal, D.~Cohen-Steiner, L.~J. Guibas, F.~M{\'{e}}moli, and S.~Y. Oudot.
\newblock {Gromov--Hausdorff} stable signatures for shapes using persistence.
\newblock In {\em Proceedings of the Symposium on Geometry Processing}, SGP
  '09, pages 1393--1403, Aire-la-Ville, Switzerland, Switzerland, 2009.
  Eurographics Association.

\bibitem{chazal2011geometric}
F.~Chazal, D.~Cohen{-}Steiner, and Q.~M{\'e}rigot.
\newblock Geometric inference for probability measures.
\newblock {\em Foundations of Computational Mathematics}, pages 1--19, 2011.

\bibitem{chazal2016observable}
F.~Chazal, W.~Crawley-Boevey, and V.~De~Silva.
\newblock The observable structure of persistence modules.
\newblock {\em Homology, Homotopy and Applications}, 18(2):247--265, 2016.

\bibitem{chazal2012structure}
F.~Chazal, V.~de~Silva, M.~Glisse, and S.~Oudot.
\newblock {\em The Structure and Stability of Persistence Modules}.
\newblock Springer International Publishing, 2016.

\bibitem{chazal2014persistence}
F.~Chazal, V.~De~Silva, and S.~Oudot.
\newblock Persistence stability for geometric complexes.
\newblock {\em Geometriae Dedicata}, 173(1):193--214, 2014.

\bibitem{chen2021approximation}
S.~Chen and Y.~Wang.
\newblock Approximation algorithms for 1-wasserstein distance between
  persistence diagrams.
\newblock {\em arXiv preprint arXiv:2104.07710}, 2021.

\bibitem{cohen2007stability}
D.~Cohen{-}Steiner, H.~Edelsbrunner, and J.~Harer.
\newblock Stability of persistence diagrams.
\newblock {\em Discrete and Computational Geometry}, 37(1):103--120, Jan. 2007.

\bibitem{cohen2010lipschitz}
D.~Cohen{-}Steiner, H.~Edelsbrunner, J.~Harer, and Y.~Mileyko.
\newblock {Lipschitz functions have L p-stable persistence}.
\newblock {\em Foundations of Computational Mathematics}, 10(2):127--139, 2010.

\bibitem{cohen2006vines}
D.~Cohen{-}Steiner, H.~Edelsbrunner, and D.~Morozov.
\newblock Vines and vineyards by updating persistence in linear time.
\newblock In {\em Proceedings of the 22nd Annual Symposium on Computational
  Geometry}, pages 119--126. ACM, 2006.

\bibitem{corbet2019kernel}
R.~Corbet, U.~Fugacci, M.~Kerber, C.~Landi, and B.~Wang.
\newblock A kernel for multi-parameter persistent homology.
\newblock {\em Computers \& Graphics: X}, 2:100005, 2019.

\bibitem{corbet2021computing}
R.~Corbet, M.~Kerber, M.~Lesnick, and G.~Osang.
\newblock Computing the multicover bifiltration.
\newblock {\em To appear in the 37th International Symposium on Computational
  Geometry (SoCG 2021). arXiv preprint arXiv:2103.07823}, 2021.

\bibitem{crawley2012decomposition}
W.~Crawley-Boevey.
\newblock Decomposition of pointwise finite-dimensional persistence modules.
\newblock {\em Journal of Algebra and Its Applications}, 14(05):1550066, 2015.

\bibitem{d2003optimal}
M.~d'Amico, P.~Frosini, and C.~Landi.
\newblock Optimal matching between reduced size functions.
\newblock {\em DISMI, Univ. di Modena e Reggio Emilia, Italy, Technical
  report}, (35), 2003.

\bibitem{d2010natural}
M.~d'Amico, P.~Frosini, and C.~Landi.
\newblock Natural pseudo-distance and optimal matching between reduced size
  functions.
\newblock {\em Acta applicandae mathematicae}, 109(2):527--554, 2010.

\bibitem{dequeant2008comparison}
M.-L. Dequeant, S.~Ahnert, H.~Edelsbrunner, T.~M. Fink, E.~F. Glynn, G.~Hattem,
  A.~Kudlicki, Y.~Mileyko, J.~Morton, A.~R. Mushegian, et~al.
\newblock Comparison of pattern detection methods in microarray time series of
  the segmentation clock.
\newblock {\em PLoS One}, 3(8), 2008.

\bibitem{dey2019generalized}
T.~K. Dey and C.~Xin.
\newblock Generalized persistence algorithm for decomposing multi-parameter
  persistence modules.
\newblock {\em arXiv preprint arXiv:1904.03766}, 2019.

\bibitem{edelsbrunner2002topological}
H.~Edelsbrunner, D.~Letscher, and A.~Zomorodian.
\newblock {Topological persistence and simplification}.
\newblock {\em Discrete and Computational Geometry}, 28(4):511--533, 2002.

\bibitem{edelsbrunner2020simple}
H.~Edelsbrunner and G.~Osang.
\newblock A simple algorithm for higher-order delaunay mosaics and alpha
  shapes.
\newblock {\em arXiv preprint arXiv:2011.03617}, 2020.

\bibitem{eisenbud1995commutative}
D.~Eisenbud.
\newblock {\em {Commutative algebra with a view toward algebraic geometry}}.
\newblock Springer, 1995.

\bibitem{fasy2014confidence}
B.~T. Fasy, F.~Lecci, A.~Rinaldo, L.~Wasserman, S.~Balakrishnan, A.~Singh,
  et~al.
\newblock Confidence sets for persistence diagrams.
\newblock {\em The Annals of Statistics}, 42(6):2301--2339, 2014.

\bibitem{gamble2010exploring}
J.~Gamble and G.~Heo.
\newblock Exploring uses of persistent homology for statistical analysis of
  landmark-based shape data.
\newblock {\em Journal of Multivariate Analysis}, 101(9):2184--2199, 2010.

\bibitem{gidea2017topological}
M.~Gidea.
\newblock Topological data analysis of critical transitions in financial
  networks.
\newblock In {\em International Conference and School on Network Science},
  pages 47--59. Springer, 2017.

\bibitem{giunti2021TDA}
B.~Giunti and J.~Lazovskis.
\newblock {T}{D}{A}-{A}pplications (an online database of papers on
  applications of {T}{D}{A} outside of math).
\newblock \url{https://www.zotero.org/groups/2425412/tda-applications}, 2021.

\bibitem{giunti2021amplitudes}
B.~Giunti, J.~S. Nolan, N.~Otter, and L.~Waas.
\newblock Amplitudes in abelian categories.
\newblock {\em arXiv preprint arXiv:2107.09036}, 2021.

\bibitem{hamilton2021persistent}
W.~Hamilton, J.~Borgert, T.~Hamelryck, and J.~Marron.
\newblock Persistent topology of protein space.
\newblock {\em arXiv preprint arXiv:2102.06768}, 2021.

\bibitem{keller2018persistent}
B.~Keller, M.~Lesnick, and T.~L. Willke.
\newblock Persistent homology for virtual screening.
\newblock {\em ChemRxiv}, 2018.

\bibitem{kerber2019exact}
M.~Kerber, M.~Lesnick, and S.~Oudot.
\newblock Exact computation of the matching distance on 2-parameter persistence
  modules.
\newblock In {\em 35th International Symposium on Computational Geometry (SoCG
  2019)}. Schloss Dagstuhl-Leibniz-Zentrum fuer Informatik, 2019.

\bibitem{kerber2017geometry}
M.~Kerber, D.~Morozov, and A.~Nigmetov.
\newblock Geometry helps to compare persistence diagrams.
\newblock {\em Journal of Experimental Algorithmics (JEA)}, 22:1--4, 2017.

\bibitem{kerber2020efficient}
M.~Kerber and A.~Nigmetov.
\newblock Efficient approximation of the matching distance for 2-parameter
  persistence.
\newblock In {\em 36th International Symposium on Computational Geometry (SoCG
  2020)}. Schloss Dagstuhl-Leibniz-Zentrum f{\"u}r Informatik, 2020.

\bibitem{kerber2021fast}
M.~Kerber and A.~Rolle.
\newblock Fast minimal presentations of bi-graded persistence modules.
\newblock In {\em 2021 Proceedings of the Workshop on Algorithm Engineering and
  Experiments (ALENEX)}, pages 207--220. SIAM, 2021.

\bibitem{khalil2021topological}
R.~Khalil, S.~Kallel, A.~Farhat, and P.~D{\l}otko.
\newblock Topological sholl descriptors for neuronal clustering and
  classification.
\newblock {\em bioRxiv}, 2021.

\bibitem{landi2018rank}
C.~Landi.
\newblock The rank invariant stability via interleavings.
\newblock In {\em Research in Computational Topology}, pages 1--10. Springer
  International Publishing, 2018.

\bibitem{lesnick2012multidimensional}
M.~Lesnick.
\newblock {\em Multidimensional Interleavings and Applications to Topological
  Inference}.
\newblock PhD thesis, Stanford University, June 2012.

\bibitem{lesnick2015theory}
M.~Lesnick.
\newblock The theory of the interleaving distance on multidimensional
  persistence modules.
\newblock {\em Foundations of Computational Mathematics}, 15(3):613--650, 2015.

\bibitem{lesnick2015interactive}
M.~Lesnick and M.~Wright.
\newblock Interactive visualization of 2-d persistence modules.
\newblock {\em arXiv preprint arXiv:1512.00180}, 2015.

\bibitem{lesnick2019computing}
M.~Lesnick and M.~Wright.
\newblock Computing minimal presentations and bigraded betti numbers of
  2-parameter persistent homology.
\newblock {\em arXiv preprint arXiv:1902.05708}, 2019.

\bibitem{mccleary2020edit}
A.~McCleary and A.~Patel.
\newblock Edit distance and persistence diagrams over lattices.
\newblock {\em arXiv preprint arXiv:2010.07337}, 2020.

\bibitem{miller2000alexander}
E.~Miller.
\newblock The alexander duality functors and local duality with monomial
  support.
\newblock {\em Journal of Algebra}, 231(1):180--234, 2000.

\bibitem{miller2020homological}
E.~Miller.
\newblock Homological algebra of modules over posets.
\newblock {\em arXiv preprint arXiv:2008.00063}, 2020.

\bibitem{miller2004combinatorial}
E.~Miller and B.~Sturmfels.
\newblock {\em Combinatorial commutative algebra}, volume 227.
\newblock Springer Science \& Business Media, 2004.

\bibitem{primary-distance}
E.~Miller and A.~Thomas.
\newblock Persistence distances via primary decomposition.
\newblock {\em draft}.

\bibitem{Mukherjee2021.04.26.441473}
S.~Mukherjee, D.~Wethington, T.~K. Dey, and J.~Das.
\newblock Determining clinically relevant features in cytometry data using
  persistent homology.
\newblock {\em bioRxiv}, 2021.

\bibitem{otter2017roadmap}
N.~Otter, M.~A. Porter, U.~Tillmann, P.~Grindrod, and H.~A. Harrington.
\newblock A roadmap for the computation of persistent homology.
\newblock {\em EPJ Data Science}, 6:1--38, 2017.

\bibitem{rabadan2019topological}
R.~Rabad{\'a}n and A.~J. Blumberg.
\newblock {\em Topological data analysis for genomics and evolution: topology
  in biology}.
\newblock Cambridge University Press, 2019.

\bibitem{riehl2017category}
E.~Riehl.
\newblock {\em Category theory in context}.
\newblock Courier Dover Publications, 2017.

\bibitem{robinson2013hypothesis}
A.~Robinson and K.~Turner.
\newblock Hypothesis testing for topological data analysis.
\newblock {\em arXiv preprint arXiv:1310.7467}, 2013.

\bibitem{robinson2018geometry}
M.~Robinson, S.~Fennell, B.~DiZio, and J.~Dumiak.
\newblock Geometry and topology of the space of sonar target echos.
\newblock {\em The Journal of the Acoustical Society of America},
  143(3):1630--1645, 2018.

\bibitem{rolle2020stable}
A.~Rolle and L.~Scoccola.
\newblock Stable and consistent density-based clustering.
\newblock {\em arXiv preprint arXiv:2005.09048}, 2020.

\bibitem{scolamiero2017multidimensional}
M.~Scolamiero, W.~Chach{\'o}lski, A.~Lundman, R.~Ramanujam, and S.~{\"O}berg.
\newblock Multidimensional persistence and noise.
\newblock {\em Foundations of Computational Mathematics}, 17(6):1367--1406,
  2017.

\bibitem{sheehy2012multicover}
D.~R. Sheehy.
\newblock A multicover nerve for geometric inference.
\newblock In {\em CCCG}, pages 309--314, 2012.

\bibitem{sheehy2013linear}
D.~R. Sheehy.
\newblock Linear-size approximations to the {V}ietoris--{R}ips filtration.
\newblock {\em Discrete \& Computational Geometry}, 49(4):778--796, 2013.

\bibitem{skraba2020wasserstein}
P.~Skraba and K.~Turner.
\newblock Wasserstein stability for persistence diagrams.
\newblock {\em arXiv preprint arXiv:2006.16824}, 2020.

\bibitem{tchernev2015modules}
A.~Tchernev and M.~Varisco.
\newblock Modules over categories and betti posets of monomial ideals.
\newblock {\em Proceedings of the American Mathematical Society},
  143(12):5113--5128, 2015.

\bibitem{thomas2019invariants}
A.~L. Thomas.
\newblock {\em Invariants and Metrics for Multiparameter Persistent Homology}.
\newblock PhD thesis, Duke University, 2019.

\bibitem{vipond2020local}
O.~Vipond.
\newblock Local equivalence of metrics for multiparameter persistence modules.
\newblock {\em arXiv preprint arXiv:2004.11926}, 2020.

\bibitem{vipond2018multiparameter}
O.~Vipond.
\newblock Multiparameter persistence landscapes.
\newblock {\em Journal of Machine Learning Research}, 21(61):1--38, 2020.

\bibitem{vipond2021multiparameter}
O.~Vipond, J.~A. Bull, P.~S. Macklin, U.~Tillmann, C.~W. Pugh, H.~M. Byrne, and
  H.~A. Harrington.
\newblock Multiparameter persistent homology landscapes identify immune cell
  spatial patterns in tumors.
\newblock {\em Proceedings of the National Academy of Sciences}, 118(41), 2021.

\bibitem{yalniz2020inferring}
G.~Yaln{\i}z and N.~B. Budanur.
\newblock Inferring symbolic dynamics of chaotic flows from persistence.
\newblock {\em Chaos: An Interdisciplinary Journal of Nonlinear Science},
  30(3):033109, 2020.

\bibitem{zomorodian2005computing}
A.~Zomorodian and G.~Carlsson.
\newblock {Computing persistent homology}.
\newblock {\em Discrete and Computational Geometry}, 33(2):249--274, 2005.

\end{thebibliography}
\end{document}